\documentclass[a4paper,12pt]{article}

\textwidth=160mm
\textheight=245mm
\setlength{\topmargin}{-15mm}
\setlength{\evensidemargin}{0mm}
\setlength{\oddsidemargin}{0mm}

%
%-------------------- Macros by Kazushi Ueda -----------------
%

%\usepackage{showkeys}

\usepackage{mymacros}

%
%----------------- only in this paper ------------------
%

%\newcommand{\Supp}{\operatorname{Supp}}
\newcommand{\op}{\mathrm{op}}

%\newcommand{\DE}{({\bf DE})}

%
%-------------------- Macros by Akira Ishii -----------------
%

%\newcommand{\Auteq}{\ensuremath{\operatorname{Auteq}}}
%\newcommand{\Aut}{\operatorname{Aut}}

%\newcommand{\GLt}{\widetilde{GL^+}}
%\newcommand{\wt}{\widetilde}

%\newcommand{\vtilde}{\tilde{v}}
%\newcommand{\Utilde}{\widetilde{U}}
%\newcommand{\bTtilde}{\widetilde{\bT}}

%
%-------------------- title -------------------------
%

\title{Dimer models and crepant resolutions}
\author{Akira Ishii and Kazushi Ueda}
\date{}
%\pagestyle{plain}

%
%-------------------- text stars --------------------
%

\begin{document}

\maketitle

\begin{abstract}
We study variations of tautological bundles
on moduli spaces
of representations
of quivers with relations
associated with dimer models
under a change of stability parameters.
We prove that if the tautological bundle
induces a derived equivalence
for some stability parameter,
then the same holds for any generic stability parameter,
and any projective crepant resolution
%of the corresponding Gorenstein toric singularity
can be obtained as the moduli space
%of quiver representations
for some stability parameter.
%Our proof is based on the argument
%of Craw and Ishii \cite{Craw-Ishii}, and
%does not depend on the result
%of Bridgeland, King and Reid
%\cite{Bridgeland-King-Reid}.
This result is used
in \cite{Ishii-Ueda_DMSMC}
to prove the abelian McKay correspondence
%as a derived equivalence
without using the result
of Bridgeland, King and Reid
\cite{Bridgeland-King-Reid}.
\end{abstract}

\section{Introduction}

In this paper,
we study
\begin{itemize}
 \item
variations of geometric invariant theory quotients,
 \item
tautological constructions on moduli spaces, and
 \item
moduli interpretations of toric varieties
\end{itemize}
for smooth toric Calabi-Yau 3-folds.

The theory of variations of geometric invariant theory quotients of a normal variety
\cite{Dolgachev-Hu,Thaddeus_flip}
deals with the dependence of geometric invariant theory quotients
on the choice of stability parameters
(i.e. the choice of a $G$-linearization on an ample line bundle).
The space of stability parameters is divided into chambers
by real codimension one walls,
and the quotient changes as one moves from one chamber to another.
If the generic point is stable for any stability parameter,
then different quotients are related by birational transformations
such as blow-ups and flips.
A prototypical example is a toric variety
constructed as the quotient of an affine space
by an action of a torus.

Tautological constructions on moduli spaces
are fundamental tools to study geometry of moduli spaces.
The moduli space that we study in this paper
is the moduli space $\scM_\theta$
of stable representations of a quiver $\Gamma$ with relations,
which is constructed by King \cite{King}
using geometric invariant theory.
Here $\theta$ is a stability parameter,
and we write the tautological bundle on $\scM_\theta$
as $\scE = \bigoplus_v \scL_v$
where $v$ runs over the set of vertices of the quiver.
The tautological bundle exists if the dimension vector
is a primitive vector \cite[Proposition 5.3]{King}.
The moduli space $\scM_{\theta}$ is an open subscheme
of the coarse moduli scheme $\overline{\scM_\theta}$
parametrizing S-equivalence classes of $\theta$-semistable representations,
which is projective over the affine scheme $\overline{\scM_0}$ defined for the
stability parameter $0$.
We consider representations
with dimension vector $(1,\dots,1)$,
so that each $\scL_v$ is a line bundle.
The stability parameter $\theta$ is said to be {\em generic}
if any semi-stable objects are stable.
Genericity of $\theta$ means that $\scM_\theta=\overline{\scM_\theta}$.

Assume that $\scM_\theta$ is smooth, and
consider the following two conditions:
\begin{itemize}
 \item[(\bfT)]
The tautological bundle $\scE$
on the moduli space $\scM_\theta$ is a tilting object.
 \item[(\bfE)]
The universal morphism
$
 \bC \Gamma \to \End \scE
$
is an isomorphism.
\end{itemize}
According to Morita theory for derived category
\cite{Bondal_RAACS, Rickard},
the conditions (\bfT)+(\bfE) imply that the functor
\begin{equation} \label{eq:der_equiv}
 \Phi(-)
   = \bR \Gamma \lb \scE \otimes - \rb
   : D^b \coh \scM_\theta \to D^b \module \bC \Gamma
\end{equation}
is an equivalence of triangulated categories.

%King \cite{King_tilt} conjectured that
%every smooth compact toric variety admits a tilting object
%which is a direct sum of line bundles.
%If true, this will provide a moduli interpretation
%of any smooth projective toric variety
%\cite{Bergman-Proudfoot}.
%Unfortunately,
%this conjecture was disproved by
%Hille and Perling
%\cite{Hille-Perling}
%for a Hirzebruch surface blown-up three times.
%Borisov and Hua \cite{Borisov-Hua}
%suggested that the conjecture might be true
%for smooth Fano toric Deligne-Mumford stacks
%and proved it in dimension two.
%However,
%Efimov \cite{Efimov_MLECLB} has shown that
%the conjecture is false even for toric Fano manifolds
%in higher dimensions, let alone toric Fano stacks.
%On the other hand,
%a fine moduli interpretation of projective toric varieties
%are obtained by Craw and Smith \cite{Craw-Smith}
%and generalized to stacks by Abdelgadir \cite{Abdelgadir:2012}.
%Their construction neither gives a tilting object
%nor a derived equivalence.

In this paper,
we study the special case
where the quiver $\Gamma$ with relations
comes from a dimer model.
A {\em dimer model} is
a bicolored graph on a real 2-torus,
%giving a polygon division of the torus.
which encodes the information
of a quiver with relations.
%It is invented by string theorists
%to study supersymmetric quiver gauge theories
%in four dimensions
%(see e.g. \cite{Kennaway_BT} and references therein).
%\cite{Franco-Hanany-Martelli-Sparks-Vegh-Wecht_GTTGBT,
%Franco-Hanany-Vegh-Wecht-Kennaway_BDQGT,
%%Franco-Vegh_MSGTDM,
%Hanany-Herzog-Vegh_BTEC,
%Hanany-Kennaway_DMTD,
%Hanany-Vegh}.
If the dimer model is {\em non-degenerate},
%satisfies a certain non-degeneracy condition,
then the moduli space $\scM_{\theta}$
of stable representations of the corresponding quiver
with dimension vector $(1, \dots, 1)$
with respect to a generic stability parameter $\theta$
%in the sense of King \cite{King}
is a smooth toric Calabi-Yau 3-fold,
% (i.e., if every edge is contained
% in at least one perfect matching)
which is a crepant resolution of a Gorenstein affine
toric variety $X$
\cite{Ishii-Ueda_08}.

%As a partial converse to this, we first prove the following:
%
%\begin{proposition} \label{pr:non-degenerate}
%Let $G$ be a dimer model.
%If the moduli space $\scM_\theta$ is three-dimensional and is a crepant resolution of
%the Gorenstein affine toric variety $X$
%for some generic $\theta$ and if the conditions \textup{(\bfT)+(\bfE)} holds,
%then $G$ is non-degenerate.
%\end{proposition}

The main result in this paper
is the following:

\begin{theorem} \label{th:change_of_stability}
Let $G$ be a non-degenerate dimer model.
If the conditions \textup{(\bfT)+(\bfE)} hold
for some generic stability parameter $\theta$,
then the same hold
for any generic stability parameter.
\end{theorem}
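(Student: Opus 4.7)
My plan is to argue by wall-crossing in the space of stability parameters. After fixing the normalization $\sum_v \theta_v = 0$, this space is subdivided into finitely many open chambers by rational real codimension-one walls, on whose interiors strict semistability first appears. The chamber structure is connected: any two generic parameters can be joined by a path crossing walls transversally one at a time. Hence it suffices to establish that (\bfT)+(\bfE) are preserved under a single wall-crossing, i.e.\ between two generic parameters $\theta,\theta'$ lying in adjacent chambers.

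Fix such $\theta\in C$ and $\theta'\in C'$, together with a generic parameter $\theta_0$ on the shared wall. The coarse moduli space $\overline{\scM_{\theta_0}}$ then receives projective birational morphisms from both $\scM_\theta$ and $\scM_{\theta'}$. Since both are smooth projective crepant resolutions of the affine Gorenstein toric variety $X = \overline{\scM_0}$ and $\dim X = 3$, the correspondence $\scM_\theta \to \overline{\scM_{\theta_0}} \leftarrow \scM_{\theta'}$ is a generalized flop; in particular $\scM_\theta$ and $\scM_{\theta'}$ are isomorphic in codimension one, and their universal representations restrict to a common universal family on that common open locus, so the tautological bundles $\scE^\theta,\scE^{\theta'}$ match naturally there.

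The main, and technically heaviest, step is to upgrade this birational match into a derived equivalence $F:D^b\coh\scM_{\theta'}\to D^b\coh\scM_\theta$ that sends $\scE^{\theta'}$ to $\scE^{\theta}$. My plan is to take the Fourier--Mukai kernel to be the (derived) structure sheaf of the fiber product $\scM_\theta\times_{\overline{\scM_{\theta_0}}}\scM_{\theta'}$, use a flop-equivalence argument of Bondal--Orlov / Van den Bergh type (applicable because both sides are projective crepant resolutions of a Gorenstein $3$-fold) to show that $F$ is an equivalence, and then verify $F(\scE^{\theta'})\simeq\scE^\theta$ by combining the universal property of the tautological bundles with a direct analysis of the flop's exceptional locus in terms of the toric/dimer combinatorics. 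Once this identification is in hand, tilting of $\scE^\theta$ transfers to tilting of $\scE^{\theta'}$, yielding (\bfT) at $\theta'$, and the canonical map $\bC\Gamma\to\End\scE^{\theta'}$ is identified with $\bC\Gamma\to\End\scE^{\theta}$, yielding (\bfE) at $\theta'$.

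The main obstacle I anticipate is precisely this identification step, $F(\scE^{\theta'})\simeq\scE^\theta$: producing a flop equivalence in the abstract is classical, but pinning down the kernel tightly enough to intertwine the two tautological bundles requires explicit control on the exceptional locus. In the non-degenerate dimer setting this locus is built from a small, combinatorially described collection of compact toric surfaces, so I would expect the verification to reduce to concrete cohomological computations in toric coordinates; carrying these out, and checking that the summand-by-summand identification of the $\scL_v$ is compatible with the flop, is where the bulk of the technical work should sit.
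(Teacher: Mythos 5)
Your overall architecture — reduce to a single wall-crossing between adjacent chambers, produce a derived equivalence intertwining the two tautological bundles, and transport (\bfT)+(\bfE) across it — is exactly the paper's strategy. But your uniform mechanism for producing that equivalence, namely the Fourier--Mukai kernel given by the structure sheaf of $\scM_\theta\times_{X_{\theta_0}}\scM_{\theta'}$ justified by a flop-equivalence argument, fails for what the paper calls walls of \emph{type 0}. These are walls where $f:\scM_\theta\to X_{\theta_0}$ is already an isomorphism: the underlying variety does not change at all when you cross the wall, the fiber product is just $\scM_\theta$ itself, and the push-pull kernel you propose is the identity functor. Yet the tautological bundle genuinely changes across such a wall: the unstable locus $Z$ is a \emph{compact divisor}, and the new bundles $\scL'_v$ are obtained from the $\scL_v$ by an elementary modification along $Z$ (kernel of the map to the family of destabilizing quotients, possibly twisted by $\scO(Z)$). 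This also undercuts your claim that the universal families "match naturally" on a common open locus whose complement has codimension two — for type 0 walls the locus of disagreement is a divisor, so the bundles do not glue. The correct equivalence in this case is a spherical twist $T_\scF$ (or its inverse, up to a twist by $\scO(\pm Z)$) along a spherical object $\scF$ supported on $Z$, built from the rigid destabilizing sub- or quotient module; establishing that $\scF$ is spherical, that $Z$ is a connected compact Cartier divisor, and that the twist sends $\bigoplus_v\scL_v$ to $\bigoplus_v\scL'_v$ is where the bulk of the paper's work lies, and none of it is supplied by a flop kernel.

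A second, smaller omission: you implicitly assume every wall is "a generalized flop," but you never rule out a primitive contraction of a surface to a point (type \II\ in Wilson's classification), for which no such equivalence of the form you describe is available. The paper excludes these by a combinatorial argument on the dimer model (the torus cannot be cut into two connected regions $R_1$, $R_2$ with $\partial R_1$ having three components), using the computation $\dim\Ext^1(Q_z,S_z)=$ number of boundary components of $R_1$. Your proposal needs either this exclusion or a separate treatment of that case. For the walls that genuinely are flops (type I), your fiber-product kernel is indeed what the paper uses (Bondal--Orlov for the Atiyah flop), and for type \III\ walls a related kernel on the two-component fiber product works; so your plan is salvageable only after the wall classification is carried out and the type 0 case is handled by an entirely different (spherical-twist) construction.
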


Although Theorem \ref{th:change_of_stability} follows
from the result of Bridgeland, King and Reid
\cite{Bridgeland-King-Reid, Van_den_Bergh_NCR},
we give an alternative proof
based on an explicit description
of variation of moduli spaces
under a change of stability parameters.
Such a description
for moduli spaces of the McKay quiver
associated with a finite small abelian subgroup $A$ of $\SL_3(\bC)$
is given %by Craw and the first author
in \cite{Craw-Ishii}
using the result of Bridgeland, King and Reid.
Our proof of Theorem \ref{th:change_of_stability}
works for general dimer models,
and does not rely on the result of Bridgeland, King and Reid.

The proof of Theorem \ref{th:change_of_stability} also gives
a generalization
of the main result of %Craw and Ishii
\cite{Craw-Ishii} to dimer models:

\begin{corollary} \label{cr:crepant_resolution}
Let $G$ be a dimer model
satisfying conditions $(\bfT)+(\bfE)$
for some (and hence any) generic
stability parameter.
Then any projective crepant resolution
of the Gorenstein affine toric variety $X$
associated with the dimer model $G$
is obtained as the moduli space $\scM_\theta$
for some generic stability parameter $\theta$.
\end{corollary}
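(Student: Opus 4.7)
The plan is to derive Corollary \ref{cr:crepant_resolution} from the wall-and-chamber analysis that underlies the proof of Theorem \ref{th:change_of_stability}. By Theorem \ref{th:change_of_stability} and the standing hypothesis, (\bfT)+(\bfE) holds for every generic stability parameter $\theta$, so each $\scM_\theta$ is smooth, and since the dimer model is non-degenerate, $\scM_\theta$ is a smooth toric Calabi--Yau $3$-fold, projective over $\overline{\scM_0}=X$ by the GIT construction; in particular, $\scM_\theta$ is a projective crepant resolution of $X$. Thus $\theta \mapsto \scM_\theta$ defines a map from the set of GIT chambers in the space of generic stability parameters to the set of projective crepant resolutions of $X$, and the corollary asserts its surjectivity.

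To prove surjectivity, I would proceed as follows. The proof of Theorem \ref{th:change_of_stability} is based on an explicit description of how $\scM_\theta$ changes across a wall: two adjacent chambers yield moduli spaces related by a controlled birational transformation, which in the smooth Calabi--Yau $3$-fold setting must be a flop. Fix a chamber $C_0$ and set $Y_0 := \scM_{\theta_0}$ for $\theta_0 \in C_0$. For any other projective crepant resolution $Y \to X$, one now appeals to the following fact: any two projective crepant resolutions of a Gorenstein affine toric Calabi--Yau $3$-fold are connected by a finite sequence of flops, equivalently, any two maximal coherent triangulations of the toric diagram of $X$ are connected by a sequence of bistellar flips. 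Realising each flop in such a chain as a wall-crossing in the stability parameter space produces a sequence of chambers $C_0, C_1, \dots, C_n$ whose final moduli space is isomorphic to $Y$.

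The main obstacle is matching the combinatorics of flops between projective crepant resolutions with the GIT wall-and-chamber structure on the space of generic stability parameters. One must verify that every wall encountered stays inside the locus where (\bfT)+(\bfE), smoothness, and projectivity over $X$ persist, so that wall-crossing does not lead out of the family of projective crepant resolutions; and conversely that every flop required by the combinatorial argument is realised by some wall. Both checks should be supplied by the explicit description used in the proof of Theorem \ref{th:change_of_stability}, where each wall is identified with a specific divisorial configuration on $\scM_\theta$ whose contraction is the flopping locus, and where the chamber structure on stability parameters corresponds to the secondary fan / coherent triangulations of the toric diagram of $X$. Granting these, surjectivity of $\theta \mapsto \scM_\theta$ — hence the corollary — follows from the connectedness of the set of projective crepant resolutions under flops.
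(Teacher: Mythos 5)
Your overall strategy coincides with the paper's: every $\scM_\theta$ is a projective crepant resolution of $X$, any two projective crepant resolutions of $X$ are connected by a finite chain of flops, and a wall of type I in $\Theta$ realises a flop (Lemma \ref{lm:flop}), so it suffices to realise every flop of $Y=\scM_\theta$ as a crossing of some wall in $\Theta$. But that last step --- Proposition \ref{pr:flop} in the paper --- is the entire content of the corollary, and your proposal does not supply it; it only asserts that the required checks ``should be supplied by the explicit description'' from the proof of Theorem \ref{th:change_of_stability}. The concrete difficulty is this: the chamber $C$ containing $\theta$ has only finitely many walls, and there is no reason its type-I walls exhaust the type-I facets of $\overline{\Amp(Y)}$. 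Before reaching the facet corresponding to the desired flop one must in general cross several walls of type 0 and type \III, across which the moduli space does not change but the tautological bundle does; consequently the linearisation maps $\varphi_C^*$ of adjacent chambers differ by a twist, and adjacent chambers in $\Theta$ need not map to adjacent cones in $F^1$. Your suggestion that the chamber structure on $\Theta$ ``corresponds to the secondary fan / coherent triangulations'' is not correct: the decomposition of $\Theta$ is strictly finer than the decomposition of the movable cone into nef cones, and the map $L:\Theta_\bQ\to\Pic(Y)_\bQ$ is only piecewise linear and far from injective.

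What the paper actually does to close this gap is: (i) describe the chamber $C$ explicitly by inequalities indexed by exceptional curves and compact reduced divisors (Theorem \ref{th:Craw-Ishii_5.9}), which bounds $\varphi_C^*(C)$ in the fibre direction of $p:F^1\to F^1/F^2$; (ii) form the union $\bigcup_{L\in\Pic^c(Y)}\bigcup_{\scM_C\cong Y} L\otimes\varphi_C^*(\Cbar)$ over twists by line bundles attached to compact divisors, prove local finiteness of this union using Lemma \ref{lm:Craw-Ishii_8.1}, and prove that it covers $p^{-1}(\Amp'(Y))$ (Proposition \ref{pr:Craw-Ishii_8.2}); the covering statement uses that the derived equivalence across a type-0 wall is a spherical twist, hence orientation-preserving, so that crossing a wall in $\Theta$ moves $\varphi_C^*(\Cbar)$ to the correct side in $F^1$ up to a twist from $\Pic^c(Y)$. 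Without an argument of this kind (or some substitute for it), the claim that ``every flop required by the combinatorial argument is realised by some wall'' remains unproven, so the proposal as written has a genuine gap at its central step.
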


Combined with \cite[Theorem 1.4]{Ishii-Ueda_DMSMC},
this provides a fine moduli interpretation
of any projective crepant resolution
of any 3-dimensional Gorenstein affine toric variety $X$.

Theorem \ref{th:change_of_stability} is used
in \cite{Ishii-Ueda_DMSMC}
to circumvent the use
of the result of Bridgeland, King and Reid
\cite{Bridgeland-King-Reid}
in the proof of the main result.
Since the main result of \cite{Ishii-Ueda_DMSMC}
contains the abelian McKay correspondence
as a special case,
this gives a new proof
of the abelian McKay correspondence
as a derived equivalence.
%This also makes the proof
%%the proof of main results of \cite{Ishii-Ueda_DMSMC}
%logically independent
%from the results of Mozgovoy and Reineke
%\cite{Mozgovoy-Reineke},
%Davison \cite{Davison},
%Broomhead \cite{Broomhead}, and
%Bocklandt \cite{Bocklandt_CYAWQP, Bocklandt_CCDM}.
This paper is separated from \cite{Ishii-Ueda_DMSMC}
since the technique used in this paper
%which follows Craw and Ishii \cite{Craw-Ishii} closely,
is different
from the main line of the argument
in \cite{Ishii-Ueda_DMSMC},
and only the statement
of Theorem \ref{th:change_of_stability}
is needed in \cite{Ishii-Ueda_DMSMC}.
Many of the arguments in this paper are parallel to
those in \cite{Craw-Ishii};
the main difference is that our argument works
for general dimer models and
does not depend
on the result of Bridgeland, King and Reid
\cite{Bridgeland-King-Reid}.

This paper is organized as follows:
In Section \ref{sc:definitions},
we recall basic definitions on dimer models
and associated quivers.
In Section \ref{sc:moduli},
we recall the definition of the moduli space
of stable representations of quivers.
In Section \ref{sc:GIT},
we recall the construction of the moduli space
due to King \cite{King}.
It is based on geometric invariant theory,
and resulting moduli space
comes naturally with an ample line bundle.
%
%In Section \ref{sc:prop1.1},
%we give a proof of Proposition \ref{pr:non-degenerate}.
%
In Section \ref{sc:non-degenerate},
we recall the main result of \cite{Ishii-Ueda_08}
which deals with the case
when the quiver with relations comes from a non-degenerate dimer model.
In Section \ref{sc:walls_and_chambers},
we describe walls and chambers
in the space of stability parameters.
In Section \ref{sc:equivalence},
we recall the definition of a tilting object
and its relation with the derived equivalence.
In Section \ref{sc:stability_parameters},
we give a natural identification
between the space of stability parameters
and a subspace
of the Grothendieck group
of finitely-generated modules
over the path algebra of the quiver.
In Section \ref{sc:KvsPic},
we assume the derived equivalence and
show that a primitive contraction of the moduli space occurs
on the walls in the space of stability parameters.
In Section \ref{sc:variation_of_moduli},
we study destabilizing sequences.
%
%In Section \ref{sc:Quot},
%we study the obstruction class
%of the Quot scheme for quiver representations
%needed later in Section \ref{sc:variation_of_bundles}.
%
In Section \ref{sc:variation_of_bundles},
we study variation of tautological bundles and
prove Theorem \ref{th:change_of_stability}.
In Section \ref{sc:crepant_resolution},
we give the proof of Corollary \ref{cr:crepant_resolution}.

{\bf Acknowledgment}:
A.~I. is supported by Grant-in-Aid for Scientific Research (No.18540034).
K.~U. is supported by Grant-in-Aid for Young Scientists (No.24740043).

\section{Dimer models and quivers}
 \label{sc:definitions}
 
 Let $T = \bR^2 / \bZ^2$
be a real two-torus
equipped with an orientation.
A {\em bicolored graph} on $T$
consists of
\begin{itemize}
 \item a finite set $B \subset T$ of black nodes,
 \item a finite set $W \subset T$ of white nodes, and
 \item a finite set $E$ of edges,
       consisting of embedded closed intervals $e$ on $T$
       such that one boundary of $e$ belongs to $B$
       and the other boundary belongs to $W$.
       We assume that two edges intersect
       only at the boundaries.
\end{itemize}
A {\em face} of a graph is a connected component
of $T \setminus \cup_{e \in E} e$.
A bicolored graph $G$ on $T$ is called a {\em dimer model}
if every face is simply-connected and there is no univalent node.

A {\em quiver} consists of
\begin{itemize}
 \item a set $V$ of vertices,
 \item a set $A$ of arrows, and
 \item two maps $s, t: A \to V$ from $A$ to $V$.
\end{itemize}
For an arrow $a \in A$,
the vertices $s(a)$ and $t(a)$
are said to be the {\em source}
and the {\em target} of $a$
respectively.
A {\em path} on a quiver
is an ordered set of arrows
$(a_n, a_{n-1}, \dots, a_{1})$
such that $s(a_{i+1}) = t(a_i)$
for $i=1, \dots, n-1$.
We also allow for a path of length zero,
starting and ending at the same vertex.
The {\em path algebra} $\bC Q$
of a quiver $Q = (V, A, s, t)$
is the algebra
spanned by the set of paths
as a vector space,
and the multiplication is defined
by the concatenation of paths;
$$
 (b_m, \dots, b_1) \cdot (a_n, \dots, a_1)
  = \begin{cases}
     (b_m, \dots, b_1, a_n, \dots, a_1) & s(b_1) = t(a_n), \\
      0 & \text{otherwise}.
    \end{cases}
$$
A {\em quiver with relations}
is a pair of a quiver
and a two-sided ideal $\scI$
of its path algebra.
For a quiver $\Gamma = (Q, \scI)$
with relations,
its path algebra $\bC \Gamma$ is defined as
the quotient algebra $\bC Q / \scI$.
Two paths $a$ and $b$ are said to be {\em equivalent}
if they give the same element in $\bC \Gamma$.

A dimer model $(B, W, E)$ encodes
the information of a quiver
$\Gamma = (V, A, s, t, \scI)$
with relations
in the following way:
The set $V$ of vertices
is the set of faces
%the set of connected components
%of the complement
%$
% T \setminus (\bigcup_{e \in E} e),
%$
and
the set $A$ of arrows
is the set $E$ of edges of the graph.
The directions of the arrows are determined
by the colors of the nodes of the graph,
so that the white node $w \in W$ is on the right
of the arrow.
In other words,
the quiver is the dual graph of the dimer model
equipped with an orientation given by
rotating the white-to-black flow on the edges of the dimer model
by minus 90 degrees.

The relations of the quiver are described as follows:
For an arrow $a \in A$,
there exist two paths $p_+(a)$
and $p_-(a)$
from $t(a)$ to $s(a)$,
the former going around the white node
connected to $a \in E = A$ clockwise
and the latter going around the black node
connected to $a$ counterclockwise.
Then the ideal $\scI$
of the path algebra is
generated by $p_+(a) - p_-(a)$
for all $a \in A$.

A {\em small cycle} on a quiver coming from a dimer model
is the product of arrows surrounding only a single node
of the dimer model.
%A path $p$ is said to be {\em minimal}
%if it is not equivalent to a path containing a small cycle.
%A path $p$ is said to be {\em minimum}
%if any path from $s(p)$ to $t(p)$ homotopic to $p$ is equivalent
%to the product of $p$ and a power of a small cycle.
%For a pair of vertices of the quiver,
%a minimum path from one vertex to another may not exist,
%and will always be minimal when it exists.
%
Small cycles starting from a fixed vertex
are equivalent to each other.
Hence the sum $\omega$ of small cycles
over the set of vertices is a well-defined
element of the path algebra,
%For any arrow $a$,
%the small cycles $\omega_{s(a)}$ and $\omega_{t(a)}$
%starting from the source $s(a)$ and the target $t(a)$
%of $a$ respectively satisfies
%$$
% a \omega_{s(a)} = \omega_{t(a)} a.
%$$
%If follows that $\omega$
which belongs to the center of the path algebra.
%and there is the universal map
%$$
% \bC \Gamma \to \bC \Gamma[\omega^{-1}]
%$$
%into the localization of the path algebra
%by the multiplicative subset generated by $\omega$.
%Two paths $a$ and $b$ are said to be {\em weakly equivalent}
%if they are mapped to the same element
%in $\bC \Gamma[\omega^{-1}]$,
%i.e., there is an integer $i \ge 0$ such that
%$a \omega^i = b \omega^i$ in $\bC\Gamma$.

%A {\em zigzag path} is a path on a dimer model
%which makes a maximum turn to the right on a white node
%and to the left on a black node.
%Note that it is not a path on a quiver.
%We assume that a zigzag path does not have an endpoint,
%so that we can regard a zigzag path
%as a sequence $(e_i)$ of edges $e_i$ parameterized by $i \in \bZ$,
%up to translations of $i$.
%Figure \ref{fg:zigzag} shows an example
%of a part of a dimer model
%and a zigzag path on it.
%
%\begin{figure}[htbp]
%\centering
%\input{zigzag.pst}
%\caption{A zigzag path}
%\label{fg:zigzag}
%\end{figure}

A {\em perfect matching}
(or a {\em dimer configuration})
on a bicolored graph $G = (B, W, E)$
is a subset $D$ of $E$
such that for any node $v \in B \cup W$,
there is a unique edge $e \in D$
connected to $v$.

A dimer model is said to be {\em non-degenerate}
if for any edge $e \in E$,
there is a perfect matching $D$
such that $e \in D$.

\section{Moduli spaces of quiver representations}
 \label{sc:moduli}

A {\em representation} of a quiver
$
\Gamma = (V, A, s, t, \scI)
$
with relations is a module
over the path algebra $\bC  \Gamma$.
In other words,
a representation of $\Gamma$ is a collection
$M = ((M_v)_{v \in V}, (\psi_a)_{a \in A})$
of vector spaces $M_v$ for $v \in V$
and linear maps $\psi_a : M_{s(a)} \to M_{t(a)}$
for $a \in A$ satisfying relations in $\scI$.

The {\em dimension vector}
of a finite-dimensional representation
$((M_v)_{v \in V}, (\psi_a)_{a \in A})$
is defined as $(\dim M_v)_{v \in V} \in \bZ^V$.
The {\em support} of a representation
is the set of vertices $v \in V$
such that $\dim M_v \ne 0$.

A {\em stability parameter} $\theta$
for a fixed dimension vector $d$ is
an element of
\begin{equation} \label{eq:Theta}
 \Theta = \{ \theta \in \Hom (\bZ^V, \bZ) \mid
   \theta( d ) = 0 \}.
\end{equation}
%where $\chi_V = (1, \ldots, 1) \in \bZ^V$ is
%the characteristic function of $V$.

A $\bC \Gamma$-module $M$
with dimension vector $d$
is said to be {\em $\theta$-stable}
if for any non-trivial submodule
$0 \ne N \subsetneq M$,
one has $\theta(N) > \theta(M)=0$.
The module $M$ is {\em $\theta$-semistable}
if $\theta(N) \ge \theta(M)$ holds
instead of $\theta(N) > \theta(M)$.
A stability parameter $\theta$ is said to be {\it generic}
with respect to a fixed dimension vector
if semistability implies stability.
A pair $(\theta, \eta)$ of stability parameters
are said to define the same stability condition
with respect to $d$
if a $\bC \Gamma$-module $M$
with dimension vector $d$
is $\theta$-stable if and only if it is $\eta$-stable.
This stability condition is introduced by King \cite{King}
to construct the moduli space $\scM_{\theta}$
representing (the sheafification of) the functor
$$
\begin{array}{ccc}
 (\scS ch) & \to & (\scS et) \\
  \vin & & \vin \\
  T & \mapsto & (\text{a flat family over $T$
  of $\theta$-stable representations of $\Gamma$} )/ \sim.
\end{array}
$$
Here, a {\em flat family} of representations of $\Gamma$
over $T$
consists of a collection $(\scL_v)_{v \in V}$
of vector bundles on $T$
for each vertex $v$ of $\Gamma$ and
a collection $(\phi_a)_{a \in A}$ of morphisms
$\phi_a : \scL_{s(a)} \to \scL_{t(a)}$
for each arrow $a$ of $\Gamma$
satisfying the relations $\scI$ of $\Gamma$.
Two families $((\scL_v)_{v \in V}, (\phi_a)_{a \in A})$
and $((\scL_v')_{v \in V}, (\phi_a')_{a \in A})$
are {\em equivalent}
if there are a line bundle $\scL$
and a collection $(\psi_v)_{v \in V}$
of isomorphisms $\psi_v : \scL_v \simto \scL_v' \otimes \scL$
of vector bundles such that the diagram
$$
\begin{CD}
 \scL_{s(a)} @>{\phi_a}>> \scL_{t(a)} \\
 @V{\psi_{s(a)}}VV @VV{\psi_{t(a)}}V \\
 \scL_{s(a)}' \otimes \scL @>{\phi_a \otimes 1_{\scL}}>> \scL_{t(a)} \otimes \scL
\end{CD}
$$
commutes for all $a \in A$.
In our case,
the dimension vector $(1, \dots, 1)$ is primitive,
so that we do not have to sheafify the functor,
and there is a universal family over the moduli space.
The bundles $\scL_v$ in the universal family are called
the {\it tautological bundles}.
We have a freedom of simultaneously tensoring a line bundle
in the choice of the tautological bundles.
In this paper,
we choose a vertex $v_0$ of the quiver $\Gamma$
once and for all,
and normalize the tautological bundle
so that $\scL_{v_0}$ is the trivial bundle.
The moduli space of $\theta$-stable $\bC\Gamma$-modules
with dimension vector $(1, \dots, 1)$
is denoted by $\scM_\theta$.
On the other hand,
the moduli space $\scMbar_\theta$
of $\theta$-semistable modules
does not represent the moduli functor,
but parametrizes S-equivalence classes of
$\theta$-semistable modules.

\section{The ample line bundle on the moduli space}
 \label{sc:GIT}

The construction of the moduli spaces
$\scM_\theta$ and $\scMbar_\theta$
by King \cite{King} is based on geometric invariant theory,
and comes with a natural ample line bundle.
Here we recall the construction in the case where the dimension vector
is $(1, \dots ,1)$.
Fix one-dimensional $\bC$-vector spaces
$M_v=\bC$ for each vertex $v \in V$ and
consider the affine scheme
$$
 \scN = \{ (\psi_a)_{a \in A} \in \bC^A \mid
  \text{$(\psi_a)_{a \in A}$ satisfies the relations in $\scI$} \}.
$$
Let
$$
 \scG
  = (\bCx)^V
%  = \{ (g_v)_{v \in V} \mid g_v \in \bCx \} / \bCx
$$
be the group of gauge transformations,
which acts on $\scN$ by
$$
 (g_v)_{v \in V} : 
 ( \psi_a )_{a \in A}
   \mapsto
 ( \psi_a' = g_{t(a)} \cdot \psi_a \cdot g_{s(a)}^{-1} )_{a \in A}.
$$
The action of the diagonal subgroup $\bCx \subset \scG$ is trivial,
and the quotient group will be denoted by
$\bP \scG = \scG / \bCx$.
A stability parameter $\theta \in \Theta$ defines the character
$$
\begin{array}{cccc}
 \chitilde : & \scG & \to & \bCx \\
  & \vin & & \vin \\
  & (g_v)_{v \in V} & \mapsto & \prod_{v \in V} g_v^{\theta(v)},
\end{array}
$$
which descends to the character
$\chi : \bP \scG \to \bCx$
by the condition $\theta(1, \dots, 1) = 0$.
%in \eqref{eq:Theta}.
Let $\bC[\scN]_{\chi^n}^\scG$ be the space
of semi-invariant functions of weight $\chi^n$ on $\scN$,
and consider the graded ring
$
 R_\chi = \bigoplus_{n=0}^\infty \bC[\scN]_{\chi^n}^\scG.
$
The moduli space $\scMbar_\theta$ is obtained as
$
 \scMbar_\theta
  = \Proj R_\chi,
%  = \Proj \bigoplus_{n=0}^\infty \bC[\scN]_{\chi^n}^\scG,
$
and the moduli space $\scM_\theta$ is its open subscheme
parametrizing stable orbits.
It follows from the construction
that the moduli space $\scM_\theta$ comes
with an ample line bundle $\scO_{\scM_\theta}(1)$.

A choice of a splitting $s : \bP \scG \to \scG$ defines
tautological bundles $\scL_v$ for $v \in V$
as the line bundle associated with the character
$\pi_v \circ s : \bP \scG \to \bCx$ of $\bP \scG$,
where
$\pi_v : \scG \to \bCx$
is the projection
$
(g_w)_{w \in V} \mapsto g_v
$
to the $v$-th component.
Our choice for the splitting $s$ is such that
$s(\alpha)_{v_0} = 1$ for the fixed vertex $v_0$
and any $\alpha \in \bCx$.
This gives the normalization 
of the tautological bundles
such that $\scL_{v_0}$ is the trivial bundle.

Let $L_\theta : \Theta \to \Pic \scM_\theta$ be the map
defined by
\begin{equation} \label{eq:L_theta}
\begin{array}{cccc}
 L_\theta : & \Theta & \to & \Pic \scM_\theta \\
  & \vin & & \vin \\
  & \eta & \mapsto & \bigotimes_{v \in V} \scL_v^{\otimes \eta(v)},
\end{array}
\end{equation}
which we extend to the map
$L_\theta \otimes \id_\bQ : \Theta_\bQ \to \Pic (\scM_\theta)_\bQ$
where the subscript denotes the tensor product with $\bQ$.
It follows from the definition that
\begin{equation} \label{eq:L_theta1}
 L_\theta(\theta) = \scO_{\scM_\theta}(1).
\end{equation}
The definition of stability condition makes sense
also for $\theta \in \Theta_\bQ = \Theta \otimes \bQ$
or $\Theta_\bR = \Theta \otimes \bR$.
For any $\theta \in \Theta_\bR$,
one can find a stability parameter $\theta' \in \Theta$
defining the same stability condition as $\theta$.
Then the moduli space $\scM_{\theta'}$ is isomorphic to $\scM_\theta$,
and the tautological bundle is the $\bR$-line bundle
defined as
$L_{\theta'}(\theta) \in \Pic(\scM_\theta)_\bR$.

%\section{Proof of Proposition \ref{pr:non-degenerate}}
% \label{sc:prop1.1}
\section{Moduli spaces associated with non-degenerate dimer models}
\label{sc:non-degenerate}

Assume that the quiver $\Gamma$ with relations
comes from a dimer model $G$.

\begin{theorem}[{\cite[Proposition 5.1]{Ishii-Ueda_08}}]
 \label{th:Ishii-Ueda_08}
If the dimer model $G$ is non-degenerate,
then the moduli space $\scM_{\theta}$ is
a smooth toric Calabi-Yau 3-fold for generic $\theta$.
\end{theorem}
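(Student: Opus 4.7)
The plan is to realize $\scM_\theta$ as a toric variety by viewing it as a GIT quotient of an affine toric scheme by a subtorus, and then to read off its fan from the combinatorics of perfect matchings of $G$. First I would note that the defining relations of $\scN \subset \bC^A$ are binomial: each $p_+(a) = p_-(a)$ equates two monomials in the coordinates $\psi_b$. Hence $\scN$ is scheme-theoretically a union of affine toric varieties stable under the ambient torus $(\bCx)^A$, and the gauge group $\scG = (\bCx)^V$ acts through a subtorus. I would isolate the correct irreducible component $\scN_0 \subset \scN$ (the closure of the open $(\bCx)^A$-orbit on which no path vanishes) and exhibit $\scM_\theta = \scN_0 /\!\!/_\chi \bP\scG$ as a toric variety under the quotient torus $T_0 = ((\bCx)^A \cap \scN_0) / \bP\scG$.

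Next I would identify the rays of the fan with perfect matchings. Each perfect matching $D$ determines a torus-invariant divisor on $\scN_0$ cut out by setting $\psi_a = 0$ for $a \in D$, and the non-degeneracy hypothesis is precisely what ensures that every such divisor descends to a genuine ray of the fan of $\scM_\theta$: if an edge $a$ lay in no perfect matching, the coordinate divisor $\{\psi_a = 0\}$ would fail to meet $\scN_0$ in the expected codimension and no ray would appear. To each $D$ I would assign a lattice point in $\bZ^3 \cong H^1(T; \bZ) \oplus \bZ$ whose first two coordinates record the homology class on $T$ of the signed difference $D - D_0$ against a fixed reference matching $D_0$, and whose third coordinate is identically $1$. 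Since all rays then lie on the affine hyperplane $\{z = 1\}$, the toric variety is automatically Calabi--Yau (triviality of the canonical class).

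For the dimension count, the Euler characteristic of $T = \bR^2/\bZ^2$ gives $|V| - |A| + (|B|+|W|) = 0$; combined with $\dim \bP\scG = |V|-1$ and an accounting of the binomial relations modulo the syzygy induced by the central element $\omega$, one deduces $\dim \scM_\theta = 3$. For generic $\theta$, all semistable modules are stable with trivial stabilizer, so $\scM_\theta$ is a geometric quotient; on the toric side this corresponds to a regular triangulation of the convex hull of the perfect-matching lattice points on the slice $\{z = 1\}$, and smoothness reduces to verifying that each maximal simplex of this triangulation is unimodular, i.e.\ spanned by a $\bZ$-basis together with the apex.

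The principal difficulty I expect lies at two junctures: (i) establishing the precise bijection between perfect matchings and rays of the fan, since one must show that no spurious torus-invariant divisors on $\scN_0$ survive the GIT quotient and that non-degeneracy yields exactly the right supply of them; and (ii) the combinatorial verification of unimodularity for generic $\theta$, which ensures smoothness. Both steps require a fairly delicate analysis of which torus-invariant loci on $\scN_0$ meet the $\theta$-stable locus, and it is exactly here that the non-degeneracy hypothesis plays its essential role.
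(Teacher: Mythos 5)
This statement is quoted in the paper from \cite{Ishii-Ueda_08} without proof, so the comparison must be with the cited argument. Your general framing --- $\scN$ is cut out by binomials, hence a union of torus-invariant subschemes; toric divisors of the quotient correspond to perfect matchings; the lattice points $(h(D),1)$ lie on a height-one hyperplane, which gives triviality of the canonical class --- is consistent with how the toric structure is actually set up, and you are right that this is where the Calabi--Yau property comes from.

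However, there is a genuine gap at the step you label as a ``combinatorial verification'': you assert that smoothness ``reduces to verifying that each maximal simplex of this triangulation is unimodular'' and offer no argument for why unimodularity holds. This is not a routine check --- it is the entire content of the theorem. For a generic character, a GIT quotient of a toric variety by a subtorus is in general only simplicial (e.g.\ weighted projective spaces arise this way), so genericity of $\theta$ buys you at most an orbifold; the unimodularity must come from the specific combinatorics of dimer models. The cited proof does not proceed by abstractly triangulating the characteristic polygon: it constructs explicit affine charts isomorphic to $\bC^3$ around the torus-fixed points of $\scM_\theta$, with coordinates in which the universal representation is given by square-free monomials (this is the Lemma 4.5 of \cite{Ishii-Ueda_08} invoked elsewhere in the present paper), and non-degeneracy enters in showing these charts exist and cover the moduli space. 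A second, smaller gap: you restrict at the outset to the distinguished component $\scN_0$ containing the open $(\bCx)^A$-orbit, but $\scM_\theta$ is by definition the quotient of the stable locus of all of $\scN$; that no stable points lie on other components (equivalently, that $\scM_\theta = X_\theta$ is irreducible) is itself part of what has to be proved, not something you may assume by fiat. As written, the proposal correctly locates the difficulties but does not resolve them.
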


The dense torus $\bT \subset \scM_\theta$ is defined as
$$
 \bT = \{ [((M_v)_{v \in V}, (\psi_a)_{a \in A})] \in \scM_\theta \mid
  \psi_a \ne 0 \text{ for any } a \in A \},
$$
which acts on $\scM_\theta$ by pointwise multiplication.
Any toric divisor is written as
$$
 \{ [((M_v)_{v \in V}, (\psi_a)_{a \in A})] \in \scM_\theta \mid
  \psi_a = 0 \text{ for any } a \in D \},
$$
for a perfect matching $D \subset E$.
%%The stabilizer group of the divisor
%%is determined by the height change of the perfect matching.
%
For a not necessarily generic stability parameter $\theta$,
the moduli space $\scMbar_\theta$ still contains $\bT$
as an open subset,
since any object in $\bT$ has no proper subobject,
and hence $\theta$-stable for any $\theta$.
We write the normalization
of the irreducible component of $\scMbar_\theta$
containing $\bT$ as $X_\theta$.
Theorem \ref{th:Ishii-Ueda_08} gives
$X_\theta = \scM_\theta$ for generic $\theta$,
and $X_0$ is the affine toric variety
associated with the cone $\bR \cdot (\Delta \times \{ 1 \}) \subset \bR^3$
over a convex lattice polygon $\Delta \subset \bR^2$
called the {\em characteristic polygon}
\cite[Proposition 6.3]{Ishii-Ueda_08}.

\section{Walls and chambers for stability parameters}
 \label{sc:walls_and_chambers}
  
A proper non-empty subset
$R \subset V$ defines a {\em wall}
$$
 W_R = \{ \theta \in \Theta_\bR \mid \theta(\chi_R) = 0 \}
$$
in the space of stability parameters, where
%$\chi_R \in \bZ^V$
$$
 \chi_R(v) =
  \begin{cases}
   1 & v \in R, \\
   0 & \text{otherwise}
  \end{cases}
$$
is the characteristic function of $R$.

\begin{lemma} \label{lm:genericity}
A stability parameter is generic
if it does not lie on any wall $W_R$.
\end{lemma}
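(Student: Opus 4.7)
The plan is to prove the contrapositive: if $\theta \in \Theta_\bR$ is not generic, then $\theta$ lies on some wall $W_R$. So suppose $\theta$ is not generic. By definition, there exists a $\bC\Gamma$-module $M$ with dimension vector $(1,\dots,1)$ that is $\theta$-semistable but not $\theta$-stable. Thus one can find a submodule $0 \ne N \subsetneq M$ with $\theta(N) \ge 0 = \theta(M)$, and since $M$ is not $\theta$-stable, the inequality can actually be made into an equality $\theta(N) = 0$ by choosing $N$ appropriately (either $N$ itself already violates strict inequality, or one takes the corresponding factor and inverts signs; the key point is that semistability of $M$ together with failure of stability yields a proper nontrivial submodule with vanishing $\theta$-weight).

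Next I would analyze the possible dimension vectors of such a submodule $N$. Since $M$ has dimension vector $(1,\dots,1)$, each component $M_v$ is a one-dimensional $\bC$-vector space, so the subspace $N_v \subseteq M_v$ is either $0$ or all of $M_v$. Setting
\[
 R = \{ v \in V \mid N_v \ne 0 \},
\]
the dimension vector of $N$ is exactly the characteristic function $\chi_R$. Since $N$ is a proper nontrivial submodule, $R$ is a proper non-empty subset of $V$. The equality $\theta(N) = 0$ then reads $\theta(\chi_R) = 0$, which is precisely the statement that $\theta \in W_R$.

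Combining these two steps gives the contrapositive of the lemma, and thus completes the argument. I do not anticipate any serious obstacle here; the proof is essentially a bookkeeping consequence of the fact that the dimension vector is $(1,\dots,1)$, which forces every submodule of $M$ to have a $0/1$-valued dimension vector and therefore to contribute exactly one of the walls $W_R$ appearing in the definition. The only minor point to be careful about is ensuring that the destabilizing submodule can be chosen so that its dimension vector is neither $0$ nor the full dimension vector, but this is automatic from $0 \ne N \subsetneq M$.
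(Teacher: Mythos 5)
Your proof is correct and follows essentially the same route as the paper: given a strictly $\theta$-semistable module $M$, semistability forces $\theta(N)\ge 0$ while failure of stability gives a proper nontrivial submodule $N$ with $\theta(N)\le 0$, hence $\theta(N)=0$, and since the dimension vector is $(1,\dots,1)$ the dimension vector of $N$ is the characteristic function $\chi_R$ of its support, placing $\theta$ on $W_R$. The only stylistic remark is that your aside about ``inverting signs'' is unnecessary --- the same submodule $N$ witnessing the failure of stability already satisfies the equality directly.
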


\begin{proof}
If there is a strictly $\theta$-semistable object $E$,
then there is a subobject $S \subset E$
such that
$$
 \theta(\dim S) = 0.
$$
Since $\dim E = \chi_V$,
the dimension vector $\dim S$ is
the characteristic function
of the support $R$ of $S$,
and $\theta$ lies on the wall $W_R$.
\end{proof}

\begin{lemma} \label{lm:chamber1}
Assume $\Gamma$ comes from a non-degenerate dimer model $G$.
Let $\theta$ be a generic stability parameter.
Then the set
$$
 C = \lc \eta \in \Theta_\bR \relmid
\begin{array}{c}
 \text{For any $\bC \Gamma$-module $M$ of dimension vector $\chi_V$,} \\
 \text{$M$ is $\theta$-stable if and only if $M$ is $\eta$-stable}
\end{array}
 \rc
$$
is the convex open rational polyhedral cone
consisting of $\eta \in \Theta_\bR$ such that
$\eta(F) > 0$ for any non-trivial submodule
$F$ of a $\theta$-stable representation $M$.
\end{lemma}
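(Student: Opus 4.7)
The plan is to prove that $C$ equals the cone
$$
C' := \bigcap_{R \in \Sigma_\theta} \{ \eta \in \Theta_\bR : \eta(\chi_R) > 0 \},
$$
where $\Sigma_\theta$ denotes the finite collection of nonempty proper subsets $R \subsetneq V$ that arise as the support of a nontrivial proper submodule of some $\theta$-stable $\bC\Gamma$-module of dimension vector $\chi_V$. Finiteness of $\Sigma_\theta$ immediately makes $C'$ an open convex rational polyhedral cone; once $C = C'$ is established, the geometric conclusion of the lemma follows.

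The inclusion $C \subseteq C'$ is the easy direction. Given $\eta \in C$ and $R \in \Sigma_\theta$ witnessed by a $\theta$-stable $M$ and a nontrivial proper submodule $F \subsetneq M$ with support $R$, the module $M$ is also $\eta$-stable (since $\eta \in C$), so $\eta(\chi_R) = \eta(\dim F) > 0$.

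For the reverse inclusion $C' \subseteq C$, the direction ``$\theta$-stable $\Rightarrow$ $\eta$-stable'' is immediate from the definition of $C'$, as every nontrivial proper submodule of a $\theta$-stable module contributes its support to $\Sigma_\theta$. The substantive part is ``$\eta$-stable $\Rightarrow$ $\theta$-stable''; the plan is to invoke the variation-of-GIT picture identifying $C$ with the open polyhedral chamber of $\theta$ in the hyperplane arrangement $\{W_R\}$ on $\Theta_\bR$, cut out by its bounding walls, and then to show that each wall $W_R$ of $C$ on which $\theta(\chi_R)>0$ has $R\in\Sigma_\theta$ (the walls $W_R = W_{V\setminus R}$ on which $\theta(\chi_R) < 0$ are likewise accounted for by $V \setminus R \in \Sigma_\theta$). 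Suppose $W_R$ bounds $C$ with adjacent chamber $C_1$, normalized so that $\theta(\chi_R)>0$ in $C$; there exists a module $M$ whose stability differs across $W_R$. The possibility ``$M$ is $C_1$-stable but $C$-unstable'' is ruled out because its $C$-destabilizing submodule would necessarily have support $R$ (since $W_R$ is the only wall crossed between $C$ and $C_1$), forcing $\theta(\chi_R) \leq 0$ in $C$—a contradiction. Hence the only remaining possibility is that $M$ is $C$-stable with a submodule of support $R$, giving $R \in \Sigma_\theta$. Therefore the walls of $C$ are indexed by a subset of $\Sigma_\theta$, yielding $C \supseteq C'$; combined with the first inclusion, $C = C'$.

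The main obstacle is rigorously invoking the VGIT picture: that $C$ is an open polyhedral chamber of $\{W_R\}$ cut out by its bounding walls, together with the fact that as $\theta'$ crosses an interior point of such a wall, some $\theta'$-strictly-semistable module appears with a submodule of the corresponding support. This is a standard consequence of the GIT construction $\scMbar_\theta = \Proj R_\chi$ from Section~\ref{sc:GIT}; in the non-degenerate dimer setting, Theorem~\ref{th:Ishii-Ueda_08} guarantees that the associated moduli spaces are smooth toric Calabi--Yau 3-folds throughout, which keeps this analysis well-behaved.
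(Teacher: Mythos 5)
Your decomposition ($C \subseteq C'$ is easy; ``$\theta$-stable $\Rightarrow$ $\eta$-stable'' for $\eta \in C'$ is immediate) matches the paper, but the substantive direction ``$\eta$-stable $\Rightarrow$ $\theta$-stable'' is where your argument has a genuine gap. You propose to deduce it by ``invoking the variation-of-GIT picture identifying $C$ with the open polyhedral chamber of $\theta$ cut out by its bounding walls.'' That identification is not a standard consequence of the construction $\scMbar_\theta = \Proj R_\chi$; it is essentially the content of the lemma itself. In particular it presupposes that $C$ is open, convex and polyhedral, none of which is clear a priori, precisely because the problematic implication is whether an $\eta$-stable module must already be $\theta$-stable. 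So the argument is circular at the crucial point. Moreover, even granting the chamber structure, your wall-crossing step is flawed: for a module $M$ that is $C_1$-stable but not $C$-stable, its $\theta$-destabilizing submodule need not have support $R$ --- since $W_R = W_{V\setminus R}$ as hyperplanes, it can (and in the flip situation typically does) have support $V\setminus R$, with $\theta(\chi_{V\setminus R}) < 0$, which is perfectly consistent with the normalization $\theta(\chi_R)>0$. That case is therefore not ruled out, and extracting an element of $\Sigma_\theta$ from it would require the destabilizing-sequence analysis of Section \ref{sc:variation_of_moduli}, which sits downstream of this lemma.

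The paper closes the loop with a geometric input that your proposal never uses. Given $\eta \in C'$ and a module $E$ that is $\eta$-stable but not $\theta$-stable, one perturbs $\eta$ within the open cone $C'$ to a nearby \emph{generic} $\eta'$ for which $E$ is still stable (stability of $E$ is cut out by finitely many strict inequalities, hence is an open condition). Since every $\theta$-stable module is $\eta'$-stable, there is an inclusion $\scM_\theta \subset \scM_{\eta'}$; by Theorem \ref{th:Ishii-Ueda_08} both are crepant resolutions of the same affine toric variety, so the inclusion is an equality, contradicting $[E] \in \scM_{\eta'} \setminus \scM_\theta$. This is where the non-degenerate dimer hypothesis enters essentially --- in your write-up it appears only as a vague assurance that the analysis is ``well-behaved.'' To repair your approach you would need to supply this (or an equivalent) geometric argument; the combinatorial bookkeeping of walls alone does not suffice.
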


\begin{proof}
Let $C'$ be the convex open cone
consisting of $\eta \in \Theta_\bR$ such that
$\eta(F) > 0$ for any non-trivial submodule
$F$ of a $\theta$-stable representation $M$.
Although the set of $\theta$-stable representations
is infinite,
the set of dimension vectors of non-trivial submodules $F$
of a $\theta$-stable representation is finite,
so that $C'$ is a rational polyhedral cone
defined by finitely many inequalities.
The inclusion $C \subset C'$ is obvious and we show the converse.
Assume for a parameter $\eta \in C'$,
a representation $E$ is $\eta$-stable but
not $\theta$-stable.
If $\eta' \in C'$ is close enough to $\eta$,
then $E$ is also $\eta'$-stable.
Replacing $\eta$ by $\eta'$, we may assume $\eta$ is generic.
Then since both $\scM_{\theta}$ and $\scM_{\eta}$ are crepant resolutions
of the same toric variety, the inclusion $\scM_{\theta} \subset \scM_{\eta}$
implies $\scM_{\theta} = \scM_{\eta}$,
contradicting $[E] \in \scM_{\eta} \setminus \scM_{\theta}$.
\end{proof}

\begin{remark}
The proof of \pref{lm:chamber1} does not show
\begin{align} \label{eq:c1}
 C =
\lc \eta \in \Theta_\bR \relmid
\begin{array}{c}
 \text{For any $\bC \Gamma$-module $M$ of dimension vector $\chi_V$,} \\
 \text{$M$ is $\theta$-semistable if and only if $M$ is $\eta$-semistable}
\end{array}
 \rc.
\end{align}
\eqref{eq:c1} is equivalent to the genericity of all the parameters in $C$,
which follows from the connectedness
of $\overline{\scM_{\eta}}$ for any $\eta \in C$.
Once we establish (\bfT)+(\bfE),
the argument of \cite[Section 8]{Bridgeland-King-Reid}
shows the connectedness of $\scMbar_\eta$ for any $\eta \in C$.
\end{remark}

The subspace $C \subset \Theta$
is called a {\em chamber},
which is separated from other chambers by walls.
The space $\Theta$ of stability parameters
is divided into finitely-many walls and chambers,
and any stability parameters in the same chamber
give the same stability condition.
Since the map
$L_\theta : \Theta \to \Pic \scM_\theta$
defined in \eqref{eq:L_theta}
depends only on the chamber $C$ containing $\theta$,
we often write $L_C$ instead of $L_\theta$.

Let $(\theta, \theta')$ be a pair of generic stability parameters
in a pair $(C, C')$ of adjacent chambers in $\Theta$
separated by a wall $W_R$.
We assume that the stability parameter
$$
 \theta_0 = \frac{1}{2}(\theta + \theta')
$$
is {\em generic on the wall}
in the sense that $\theta_0 \in W_R$ and
it does not lie on any other walls.
Recall that $X_{\theta_0}$ is the normalization
of the distinguished irreducible component of $\scMbar_{\theta_0}$
containing the dense torus $\bT \subset \scMbar_{\theta_0}$.
By the definition of GIT quotient,
there is a projective morphism
$\scM_\theta \to \scMbar_{\theta_0}$,
%and $f' : \scM_{\theta'} \to \scMbar_{\theta_0}$,
which factors through $X_{\theta_0}$
as $\scM_\theta \xto{f} X_{\theta_0} \to \scMbar_{\theta_0}$
since $\scM_\theta$ is smooth and irreducible.
The morphism
$
 f' : \scM_{\theta'} \to X_{\theta_0}
$
is defined similarly, and
one obtains the diagram
%\begin{equation} \label{eq:VGIT1} 
%\begin{psmatrix}[colsep=1cm, rowsep=1cm]
% \scM_{\theta} & & \scM_{\theta'} \\
% & \scMbar_{\theta_0}
%\end{psmatrix}
%\psset{shortput=nab,labelsep=1pt,nodesep=3pt,arrows=->}
%\ncline{1,1}{2,2}_{f}
%\ncline{1,3}{2,2}^{f'}
%\end{equation}
\begin{equation} \label{eq:VGIT} 
\begin{psmatrix}[colsep=1cm, rowsep=1cm]
 \scM_{\theta} & & \scM_{\theta'} \\
 & X_{\theta_0}
\end{psmatrix}.
\psset{shortput=nab,labelsep=1pt,nodesep=3pt,arrows=->}
\ncline{1,1}{2,2}_{f}
\ncline{1,3}{2,2}^{f'}
\end{equation}
Let $\scO_{X_{\theta_0}}(1)$ be the pull-back of
the ample line bundle $\scO_{\scMbar_{\theta_0}}(1)$
by the normalization map
$X_{\theta_0} \to \scMbar_{\theta_0}$.

\begin{lemma}
The line bundle $L_C(\theta')^{-1}$ is $f$-ample.
\end{lemma}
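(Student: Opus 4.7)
The plan is to express $L_C(\theta')^{-1}$ as $\scO_{\scM_\theta}(1) \otimes f^*\scO_{X_{\theta_0}}(-2)$, from which $f$-ampleness follows at once because $\scO_{\scM_\theta}(1)$ is ample on $\scM_\theta$.

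The first ingredient is the linearity of $L_C : \Theta_\bR \to \Pic(\scM_\theta)_\bR$ built into \eqref{eq:L_theta}. Combined with $\theta_0 = \tfrac{1}{2}(\theta+\theta')$ and the normalization \eqref{eq:L_theta1}, this gives
\[
 L_C(\theta')^{-1} = \scO_{\scM_\theta}(1) \otimes L_C(\theta_0)^{\otimes(-2)}.
\]

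The main step is the identification $f^*\scO_{X_{\theta_0}}(1) = L_C(\theta_0)$. Unwinding King's construction recalled in Section \ref{sc:GIT}, the ample bundle $\scO_{\scMbar_{\theta_0}}(1)$ is the descent to the GIT quotient of the $\scG$-equivariant trivial bundle on the $\theta_0$-semistable locus of $\scN$ twisted by the character $\chitilde_{\theta_0}$. The morphism $\scM_\theta \to \scMbar_{\theta_0}$ is induced by an inclusion of $\scG$-invariant open subschemes of $\scN$ (the $\theta$-stable locus sits inside the $\theta_0$-semistable locus), so pulling back $\scO_{\scMbar_{\theta_0}}(1)$ yields the descent of the same $\chitilde_{\theta_0}$-twisted equivariant line bundle to $\scM_\theta$; by the tautological bundle recipe of Section \ref{sc:GIT}, this is exactly $L_C(\theta_0)$. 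Factoring the morphism through $X_{\theta_0} \to \scMbar_{\theta_0}$ and recalling that $\scO_{X_{\theta_0}}(1)$ is by definition the pullback of $\scO_{\scMbar_{\theta_0}}(1)$ along the normalization gives the desired equality.

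Substituting this identification back produces $L_C(\theta')^{-1} \otimes f^*\scO_{X_{\theta_0}}(2) = \scO_{\scM_\theta}(1)$, which is ample on $\scM_\theta$; this is precisely the standard criterion for $f$-ampleness of $L_C(\theta')^{-1}$, using also that $\scO_{X_{\theta_0}}(1)$ is ample on $X_{\theta_0}$ (ampleness is preserved by the finite normalization map). The step requiring real care is the VGIT identification of the pulled-back polarization with $L_C(\theta_0)$; everything else is a formal manipulation in $\Pic(\scM_\theta)_\bR$.
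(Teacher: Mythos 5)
Your proof is correct and follows essentially the same route as the paper: both exploit the linearity $L_C(\theta)\otimes L_C(\theta') \cong L_C(2\theta_0)$ together with the identification $L_C(\theta_0)\cong f^*\scO_{X_{\theta_0}}(1)$ to write $L_C(\theta')^{-1}$ as an ample bundle twisted by a pullback from $X_{\theta_0}$. The only difference is that you spell out the GIT descent argument behind $L_C(\theta_0)\cong f^*\scO_{X_{\theta_0}}(1)$, which the paper takes as immediate from the definitions.
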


\begin{proof}
One has
$
 L_C(\theta_0) \cong f^* \scO_{X_{\theta_0}}(1),
$
so that
$
 L_C(\theta) \otimes L_C(\theta')
  \cong L_C(2 \theta_0)
  \cong f^* \scO_{X_{\theta_0}}(2).
$
This shows that
$
 L_C(\theta')^{-1}
  \cong L_C(\theta) \otimes f^* \scO_{X_{\theta_0}}(-2)
$
is $f$-ample.
\end{proof}

\begin{lemma}
Both $f$ and $f'$ are $\bT$-equivariant,
so that \eqref{eq:VGIT} is a diagram
in the category of toric varieties.
\end{lemma}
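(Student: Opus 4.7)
The approach is to reduce the equivariance of $f$ and $f'$ to a standard principle: a morphism between normal varieties containing $\bT$ as a common dense open subset which restricts to the identity on $\bT$ is automatically equivariant with respect to any extension of the translation action of $\bT$ on itself. The main task is thus to check that $f$ (and symmetrically $f'$) maps $\bT \subset \scM_\theta$ into $\bT \subset X_{\theta_0}$ via the identity.

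To establish this, recall from the discussion following Theorem \ref{th:Ishii-Ueda_08} that every point of $\bT$ corresponds to a representation $M$ with all $\psi_a \ne 0$, and such a representation has no proper subobject; in particular it is $\eta$-stable for every stability parameter $\eta$. Hence $M$ is $\theta_0$-stable, its S-equivalence class in $\scMbar_{\theta_0}$ coincides with its isomorphism class, and the GIT morphism $\scM_\theta \to \scMbar_{\theta_0}$ carries $[M] \in \bT \subset \scM_\theta$ to $[M] \in \bT \subset \scMbar_{\theta_0}$. Since $\bT$ is smooth, the normalization $X_{\theta_0} \to \scMbar_{\theta_0}$ is an isomorphism above $\bT$, so the induced map $f : \scM_\theta \to X_{\theta_0}$ also restricts to the identity on $\bT$; the same reasoning applies to $f'$.

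For the equivariance itself, I compare the two morphisms $\bT \times \scM_\theta \rightrightarrows X_{\theta_0}$ given respectively by $(t,x) \mapsto f(t \cdot x)$ and $(t,x) \mapsto t \cdot f(x)$. Both restrict on the dense open $\bT \times \bT$ to the multiplication in $\bT$ composed with the inclusion $\bT \hookrightarrow X_{\theta_0}$, so they coincide on a dense subset of the reduced scheme $\bT \times \scM_\theta$. Since $X_{\theta_0}$ is separated, the two morphisms must agree everywhere, proving that $f$ is $\bT$-equivariant; the identical argument applies to $f'$, so that \eqref{eq:VGIT} lives in the category of toric varieties.

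The main subtlety to verify is that $\bT$ actually acts on $X_{\theta_0}$ compatibly with its action on $\scMbar_{\theta_0}$. This follows from the fact that the natural torus action on $\scN$ commutes with the gauge group $\scG$ and hence descends to an action on $\scMbar_{\theta_0}$ which preserves the distinguished irreducible component, this component being the closure of $\bT$; the action then lifts canonically to the normalization $X_{\theta_0}$ by functoriality, and the normalization map is automatically $\bT$-equivariant.
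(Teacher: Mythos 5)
Your proof is correct and follows essentially the same route as the paper, which simply asserts that the claim is clear from the definition of $\bT$ and of its action on $\scM_\theta$, $\scM_{\theta'}$ and $X_{\theta_0}$; you have merely spelled out the standard details (that $f$, $f'$ restrict to the identity on the common dense torus, and that two morphisms from a reduced scheme to a separated scheme agreeing on a dense open subset agree everywhere). The verification that the action descends to $\scMbar_{\theta_0}$ and lifts to the normalization $X_{\theta_0}$ is a worthwhile addition, and nothing in your argument is flawed.
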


\begin{proof}
This is clear from the definition of $\bT$
and its action on $\scM_\theta$,
$\scM_{\theta'}$ and $X_{\theta_0}$.
\end{proof}

\section{Tilting object and derived equivalence}
 \label{sc:equivalence}

\begin{definition} \label{df:tilting}
An object $\scE$ in a triangulated category $\scT$
is {\em acyclic} if
$$
 \Ext^k(\scE, \scE) = 0, \qquad k \ne 0.
$$
It is a {\em generator} if for any object $\scF$,
$$
 \Ext^k(\scE, \scF) = 0
$$
for any $k \in \bZ$ implies $\scF \cong 0$.
An acyclic generator is called a {\em tilting object}.
\end{definition}

A tilting object induces a derived equivalence:

\begin{theorem}[Bondal \cite{Bondal_RAACS}, Rickard \cite{Rickard}]
Let $\scE$ be a tilting object
in the derived category $D^b \coh X$
of coherent sheaves on a smooth quasi-projective variety $X$.
Then the functor
\begin{equation} \label{eq:der_equiv1}
 \Phi(-)
   = \bR \Gamma(\scE \otimes - )
   : D^b \coh X \to D^b \module A
\end{equation}
is an equivalence
from $D^b \coh X$
to the derived category $D^b \module A$
of finitely-generated left modules
over the endomorphism algebra $A = \Hom(\scE, \scE)$.
\end{theorem}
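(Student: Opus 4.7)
The plan is to construct an explicit quasi-inverse to $\Phi$ as its left adjoint and then verify that the counit is an isomorphism and that essential surjectivity holds, by a d\'evissage starting from $\scE$. Concretely, I would set $\Psi(-) = (-) \otimes_{A}^{\mathbf{L}} \scE \colon D^b \module A \to D^b \coh X$; well-definedness on bounded derived categories uses the smoothness of $X$, which (together with acyclicity) forces $A = \End(\scE)$ to have finite global dimension so that $\Psi$ preserves boundedness.

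First I would compute $\Phi$ on the tilting object: by acyclicity, $\bR\Hom(\scE,\scE) \cong A$ is concentrated in degree zero, and the $A$-module structure coming from the first argument is precisely the free rank-one module ${}_A A$. Consequently $\Psi\Phi(\scE) \cong A \otimes_{A}^{\mathbf{L}} \scE \cong \scE$, and unwinding the definitions identifies the counit $\varepsilon_{\scE} \colon \Psi\Phi(\scE) \to \scE$ with the identity.

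Next let $\mathcal{T} \subset D^b \coh X$ denote the full subcategory of those $F$ for which $\varepsilon_F$ is an isomorphism. Because $\Psi$ and $\Phi$ are exact, $\mathcal{T}$ is a triangulated subcategory closed under shifts and direct summands, and by the previous step it contains $\scE$. To conclude $\mathcal{T} = D^b \coh X$, take any $F$ and let $G$ be the cone of $\varepsilon_F$; the triangle identities for the adjunction $\Psi \dashv \Phi$ force $\Phi(G) = 0$, and then the generator hypothesis on $\scE$ gives $G \cong 0$, so $F \in \mathcal{T}$. Essential surjectivity is the dual story: the essential image of $\Phi$ is triangulated, closed under summands, and contains $A = \Phi(\scE)$, which classically generates $D^b \module A$.

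The main obstacle will be the boundedness issues in setting up $\Psi$ and, in the quasi-projective rather than proper setting, ensuring that $\Phi$ genuinely admits a left adjoint in the derived sense. Handling this cleanly requires replacing $\scE$ by a bounded complex of locally free sheaves and working at the level of $K$-injective resolutions, so that $\Phi$ and $\Psi$ are represented by honest derived functors; smoothness of $X$ then guarantees finiteness of the relevant Ext- and Tor-dimensions and underwrites the remaining boundedness claims.
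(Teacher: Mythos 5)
The paper does not prove this statement; it is quoted as a known theorem of Bondal and Rickard, so there is no internal proof to compare against. Your argument is the standard one from the tilting literature and is sound in outline, but two points need correction. First, you conflate $\scE$ with $\scE^\vee$ throughout: since $\Phi(-) = \bR\Gamma(\scE\otimes -) = \bR\Hom(\scE^\vee,-)$, the object sent to the free module is $\scE^\vee$, via $\Phi(\scE^\vee)\cong\bR\Hom(\scE,\scE)\cong A$ (concentrated in degree zero by acyclicity); accordingly the left adjoint is $(-)\otimes_A^{\mathbf{L}}\scE^\vee$ and the d\'evissage must start from $\scE^\vee$, not $\scE$. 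This is harmless because $\scE$ is tilting if and only if $\scE^\vee$ is (a point the paper itself makes when comparing $\Phi$ with $\bR\Hom(\scE,-)$), but as written your computation of $\Phi$ on the tilting object identifies $\bR\Gamma(\scE\otimes\scE)$ with $\bR\Hom(\scE,\scE)$, which is not correct. Second, the step ``the triangle identities force $\Phi(G)=0$'' is out of logical order: the identity $\Phi(\varepsilon_F)\circ\eta_{\Phi F}=\mathrm{id}$ only shows $\Phi(\varepsilon_F)$ is an isomorphism --- hence $\Phi(G)=0$ for the cone $G$ of $\varepsilon_F$ --- once you already know the unit $\eta$ is an isomorphism on all of $D^b\module A$. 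The argument should therefore run: (i) prove $\eta_M$ is an isomorphism by d\'evissage from $M=A$, using that $A$ generates $D^b\module A$ as a thick subcategory because $A$ has finite global dimension (this is where smoothness of $X$ enters, as you note); (ii) deduce $\Phi(G)=0$; (iii) invoke the generator hypothesis to conclude $G\cong 0$. With that reordering, and with the standard verification that $\bR\Hom(\scE^\vee,F)$ is a complex of finitely generated $A$-modules in the quasi-projective setting (here $X$ is projective over an affine, which supplies the needed finiteness), your proof is correct.
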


It is more common to use the functor
\begin{equation} \label{eq:der_equiv2}
 \Psi(-)
   = \bR \Hom(\scE, - )
   : D^b \coh X \to D^b \module B
\end{equation}
where $B = A^\op$ is the opposite ring of $A$.
%instead of the functor $\Phi$ above.
Since $\scE$ is a tilting object
if and only if $\scE^\vee$ is a tilting object
and $B = A^\op = \End(\scE^\vee)$,
the functor $\Phi$
%\eqref{eq:der_equiv1}
is an equivalence
if and only if
the functor $\Psi$
%\eqref{eq:der_equiv2}
is an equivalence.
The advantage of the functor $\Phi$
is that
if $X = \scM_\theta$ is the moduli space of
stable representations of the quiver $\Gamma$ with relations and
$\scE = \bigoplus_v \scL_v$ is the tautological bundle,
then the functor
\begin{equation*} %\label{eq:der_equiv2}
 \Phi_\theta(-)
   = \bR \Gamma \lb \scE \otimes - \rb
   : D^b \coh \scM_\theta \to D^b \module \bC \Gamma,
\end{equation*}
sends the structure sheaf $\scO_y$
of a point $y \in \scM_\theta$
to the $\bC \Gamma$-module $E_y$
parametrized by that point;
$
 \Phi_\theta(\scO_y) = E_y.
$

The full subcategory of $\module \bC \Gamma$
consisting of finite-dimensional representations
will be denoted by $\module_c \bC \Gamma$.
A $\bC \Gamma$-module $M$ is {\em nilpotent}
if there is a natural number $n$
such that any path with length greater than $n$
acts on $M$ by zero.
The full subcategory of $\module_c \bC \Gamma$
consisting of nilpotent representations
will be denoted by $\module_0 \bC \Gamma$.
%Both $\module_c \bC \Gamma$ and
%$\module_0 \bC \Gamma$ are abelian categories.
%
The functor $\Phi_\theta$ induces functors
$
% \Phi_\theta :
  D^b \coh_c \scM_\theta \to D^b \module_c \bC \Gamma
$
and
$
% \Phi_\theta :
  D^b \coh_0 \scM_\theta \to D^b \module_0 \bC \Gamma,
$
both of which will be written as $\Phi_\theta$
by abuse of notation.
Here $\coh_c \scM_\theta$
is the full subcategory of $\coh \scM_\theta$ 
consisting of compactly supported sheaves,
and $\coh_0 \scM_\theta$ is its full subcategory
consisting of sheaves supported on the fiber
of the unique torus fixed point of $X_0$
by the natural map $\scM_\theta \to X_0$.

\section{Stability parameters and Grothendieck groups}
 \label{sc:stability_parameters}

Assume $\Gamma$ comes from a non-degenerate dimer model $G$ and
the conditions \textup{(\bfT)+(\bfE)} hold
for some generic stability parameter $\theta$.
The Grothendieck groups of
$\coh \scM_\theta$,
$\coh_c \scM_\theta$,
$\coh_0 \scM_\theta$
$\module \bC \Gamma$,
$\module_c \bC \Gamma$ and
$\module_0 \bC \Gamma$
will be denoted by
$K(\scM_\theta)$,
$K_c(\scM_\theta)$,
$K_0(\scM_\theta)$,
$K(\Gamma)$,
$K_c(\Gamma)$ and
$K_0(\Gamma)$ respectively.
Since no path of length zero appears in the relations of $\Gamma$,
the Grothendieck group $K_0(\Gamma)$ is the abelian group
freely generated by the classes of simple modules $S_v$
supported on vertices $v$ of the quiver.
Our assumption implies that
the path algebra $\bC \Gamma$ has finite global dimension,
so that $K(\Gamma)$ is isomorphic to its subgroup
$K^\perf(\Gamma)$ generated by the classes of projective modules.
For each vertex $v \in V$, there is a projective module
$P_v = \bC \Gamma \cdot e_v$,
where $e_v \in \bC \Gamma$ is the idempotent associated with $v$.
There is a natural pairing
\begin{equation} \label{eq:Euler_form1}
 \chi : K^\perf(\Gamma) \times K_c(\Gamma) \to \bZ, \qquad
  (\alpha, \beta) \mapsto \sum_i (-1)^i \dim \Ext^i(\alpha, \beta)
\end{equation}
called the {\em Euler form},
and the {\em numerical Grothendieck group} $N_c(\Gamma)$
is defined as the quotient of $K_c(\Gamma)$
by the radical of the Euler form.
It is clear that
$$
 \Ext^i(P_v, S_w) =
  \begin{cases}
   \bC & v = w \text{ and } i = 0, \\
   0 & \text{otherwise},
  \end{cases}
$$
so that one has the following:

\begin{lemma} \label{lm:dual_bases}
Assume that
\begin{itemize}
 \item
$K(\Gamma)$ and $K_0(\Gamma)$ have the same rank,
 \item
$K(\Gamma)$ is torsion-free, and
 \item
$K_0(\Gamma) \to N_c(\Gamma)$ is surjective,
\end{itemize}
then the map $K_0(\Gamma) \to N_c(\Gamma)$ is an isomorphism,
and $([\scP_v])_{v \in V}$ and $([S_v])_{v \in V}$ are dual bases
of $K(\Gamma)$ and $N_c(\Gamma)$.
\end{lemma}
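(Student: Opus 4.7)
The plan is to exploit the duality identity
$$\chi([P_v], [S_w]) = \delta_{vw}$$
recorded just above the lemma: this Kronecker delta makes the Euler pairing a perfect $\bZ$-bilinear duality as soon as the three hypotheses hold, and the whole lemma is then essentially linear algebra on finitely generated abelian groups.

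First I would observe that the map $K_0(\Gamma) \to N_c(\Gamma)$ is \emph{injective}, without using any hypothesis: since $K_0(\Gamma)$ is freely generated by $([S_v])_{v \in V}$ (as noted before the lemma, because no path of length zero appears in $\scI$), any element $x = \sum_v n_v [S_v]$ mapping to zero in $N_c(\Gamma)$ lies in the radical of the Euler form, and pairing with $[P_w]$ for each $w \in V$ yields $n_w = \chi([P_w], x) = 0$. Combined with the surjectivity hypothesis, this gives the asserted isomorphism $K_0(\Gamma) \simto N_c(\Gamma)$ and transports the basis $([S_v])$ to a $\bZ$-basis of $N_c(\Gamma)$.

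With a basis now in hand on one side, I would deduce that $([P_v])_{v \in V}$ is a $\bZ$-basis of $K(\Gamma)$ by testing against it via $\chi$. Concretely, the Euler form induces a homomorphism
$$\Phi : K(\Gamma) \to \Hom_\bZ(N_c(\Gamma),\bZ) \cong \bZ^V, \qquad x \mapsto (\chi(x,[S_v]))_{v \in V},$$
whose image contains every standard basis vector of $\bZ^V$, since $\Phi([P_v])$ is the coordinate functional at $[S_v]$; thus $\Phi$ is surjective. Here the remaining two hypotheses enter: because $K(\Gamma)$ is torsion-free of rank $|V|$, it is abstractly isomorphic to $\bZ^{|V|}$, and any surjection $\bZ^{|V|} \twoheadrightarrow \bZ^{|V|}$ is automatically an isomorphism (its kernel is a torsion subgroup of a torsion-free group, hence trivial). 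Consequently $\Phi$ is an isomorphism, and the classes $([P_v])$ pull back the standard basis of $\bZ^V$ to a $\bZ$-basis of $K(\Gamma)$, dual to $([S_v])$ under $\chi$ by the identity we started from.

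The main thing to be careful about -- more a bookkeeping point than a genuine obstacle -- is to verify that $\chi$ legitimately descends from its definition on $K^\perf(\Gamma) \times K_c(\Gamma)$ to a pairing on $K(\Gamma) \times N_c(\Gamma)$. This is immediate: $K(\Gamma) \cong K^\perf(\Gamma)$ was granted by the finite global dimension of $\bC\Gamma$ recorded above, and $N_c(\Gamma)$ is by construction the quotient of $K_c(\Gamma)$ by the radical of $\chi$, so the pairing is well-defined on both factors.
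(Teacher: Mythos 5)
Your proof is correct and follows exactly the argument the paper intends: the lemma is stated in the paper without an explicit proof, as an immediate consequence of the displayed identity $\chi([P_v],[S_w])=\delta_{vw}$, and your writeup is a correct and careful fleshing-out of that linear algebra (injectivity of $K_0(\Gamma)\to N_c(\Gamma)$ by pairing against the $[P_w]$, then using torsion-freeness and equality of ranks to promote the surjection onto $\Hom(N_c(\Gamma),\bZ)$ to an isomorphism). No gaps.
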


%Assume $\Gamma$ comes from a non-degenerate dimer model $G$ and
%the conditions \textup{(\bfT)+(\bfE)} hold
%for some generic stability parameter $\theta$.
%Then $\bC \Gamma$ is of finite global dimension.
\begin{lemma}
The rank of $K(\Gamma) \cong K(\scM_\theta)$ coincides
with the Euler number of $\scM_\theta$,
which coincides with the rank of
$
 K_0(\Gamma)
  \cong K_0(\scM_\theta).
$
\end{lemma}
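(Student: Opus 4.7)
The plan is to prove the three equalities by independently computing the ranks on the quiver side, transferring them to the geometric side via the derived equivalence, and then matching the common rank with $\chi(\scM_\theta)$ using the toric structure of $\scM_\theta$.

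First, I would compute $\operatorname{rank}K(\Gamma)$ and $\operatorname{rank}K_0(\Gamma)$ directly. Since (\bfT)+(\bfE) places $D^b\module\bC\Gamma$ in derived equivalence with $D^b\coh\scM_\theta$ and $\scM_\theta$ is smooth, the algebra $\bC\Gamma$ has finite global dimension, so $K(\Gamma)=K^\perf(\Gamma)$. Because no path of length zero appears in the relations of $\Gamma$, the projective modules $P_v=\bC\Gamma\cdot e_v$ and the simples $S_v$ give free bases of $K(\Gamma)$ and $K_0(\Gamma)$ respectively, both indexed by $v\in V$, so both ranks equal $|V|$.

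Second, the derived equivalence $\Phi_\theta$ and its restrictions to the subcategories of compactly supported and fiber-supported sheaves (recorded just before this lemma) descend to isomorphisms $K(\Gamma)\cong K(\scM_\theta)$ and $K_0(\Gamma)\cong K_0(\scM_\theta)$, so these two geometric Grothendieck groups also have rank $|V|$.

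Third, and this is the geometric crux, I would identify this common rank with $\chi(\scM_\theta)$. Since $\scM_\theta$ is a smooth quasi-projective toric Calabi--Yau 3-fold projective over the affine toric variety $X_0$ (Theorem \ref{th:Ishii-Ueda_08}), and $X_0$ admits a contracting one-parameter subgroup toward its unique torus-fixed point, one can choose a generic one-parameter subgroup of $\bT$ whose limits $\lim_{t\to 0}t\cdot x$ are well-defined on all of $\scM_\theta$ and land in the projective fiber over $0\in X_0$. The resulting Bia\l ynicki--Birula decomposition expresses $\scM_\theta$ as a disjoint union of locally closed affine cells indexed by the finite set of $\bT$-fixed points, so the number $N$ of such fixed points simultaneously computes $\chi(\scM_\theta)$ and $\operatorname{rank}K(\scM_\theta)$; the analogous decomposition of the projective fiber gives $\operatorname{rank}K_0(\scM_\theta)=N$ as well. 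Combined with the earlier steps, all three quantities coincide with $N=|V|$. The main obstacle is making the Bia\l ynicki--Birula argument rigorous for a non-complete toric threefold: one must verify that the plus-cells are affine spaces defined on all of $\scM_\theta$ and that their classes form an integral basis of $K(\scM_\theta)$. Properness of $\scM_\theta\to X_0$ together with the contracting torus action on $X_0$ is the essential input that makes this possible.
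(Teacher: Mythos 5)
Your overall strategy is close to the paper's: the geometric heart of both arguments is a torus-equivariant cell decomposition of $\scM_\theta$ indexed by the torus-fixed points (equivalently, the maximal cones), and your Bia\l ynicki--Birula decomposition along a one-parameter subgroup contracting $X_0$ is exactly what the paper implements via the ordering of maximal cones in \cite[Section 5.2]{Fulton_ITV}. However, there are two gaps. First, your algebraic computation of $\operatorname{rank}K(\Gamma)$ is unjustified and, in effect, circular. The hypothesis that no relation involves a path of length zero gives d\'evissage for $K_0(\Gamma)$ (free on the $[S_v]$), but it says nothing about $K(\Gamma)$. Since $\bC\Gamma$ is infinite-dimensional (an order over the coordinate ring of $X_0$), Krull--Schmidt fails and a finitely generated projective module need not be a direct sum of the $P_v$; finite global dimension only yields that $K(\Gamma)=K^\perf(\Gamma)$ is generated by the classes of \emph{all} finitely generated projectives. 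The pairing $\chi(P_v,S_w)=\delta_{vw}$ does give linear independence of the $[P_v]$, i.e.\ $\operatorname{rank}K(\Gamma)\ge |V|$, but the reverse inequality is essentially what this lemma together with Lemma \ref{lm:dual_bases} is designed to establish: the paper only concludes that the $[P_v]$ form a basis in Proposition \ref{pr:perfect}, \emph{after} the rank equality is known. (Note also that the lemma never claims the common rank is $|V|$; that identity is a corollary, not an input.)

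Second, the step you yourself flag as ``the main obstacle'' is where the real content lies, and your proposal does not resolve it. On the non-complete $\scM_\theta$ the plus-cells give \emph{generation} of the Chow (hence Grothendieck) group by $N$ classes, so $\operatorname{rank}\le N$; the delicate point is the lower bound, i.e.\ independence of the cell classes, and ``properness over $X_0$ plus the contracting action'' is not by itself an argument for that. The paper's device is to complete the fan $\Sigma$ of $\scM_\theta$ to a complete simplicial fan $\Sigmatilde$ with $n$ extra maximal cones and to run Fulton's filtration twice with two different orderings of the maximal cones: placing the cones of $\Sigma$ last identifies the bottom closed piece with the compact fiber over the torus-fixed point of $X_0$ and yields $\operatorname{rank}K_0(\scM_\theta)=e$, while placing them first identifies the top open piece with $\scM_\theta$ and yields $\operatorname{rank}K(\scM_\theta)=\chi(\scM_\theta)=e$; the known total rank $n+e$ of the complete toric variety is what forces independence in both cases. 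If you prefer to stay on the open variety, you must supply a substitute, e.g.\ pairing the closures of the plus-cells against the structure sheaves of the fixed points and checking the resulting intersection matrix is unitriangular for a suitable ordering.
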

\begin{proof}
Since the rank of the Grothendieck group coincides with that of
the Chow group, we may consider Chow groups instead of Grothendieck groups.
Then the assertion follows
from the argument of \cite[Section 5.2]{Fulton_ITV},
which is sketched below for the readers' convenience.

Let $\Sigma$ be the fan which determines the toric variety $\scM_\theta$
and let $e$ be the number of $3$-dimensional cones of $\Sigma$.
We can construct a complete simplicial fan $\Sigmatilde$ containing $\Sigma$
such that $\Sigmatilde$ has one extra one-dimensional cone and
$n$ extra three-dimensional cones,
where $n$ is the number of edges of the characteristic polygon.
Then $\Sigmatilde$ determines a projective toric variety $X(\Sigmatilde)$.

If we take the ordering of the three-dimensional cones of $\Sigmatilde$
in \cite[Section 5.2]{Fulton_ITV}
so that the last $e$ cones are those of $\Sigma$,
then in the filtration
$X(\Sigmatilde)=Z_1 \supset Z_2 \supset \dots \supset Z_{n+e}$
by closed subschemes in \cite[Section 5.2]{Fulton_ITV},
the subscheme $Z_{n+1}$ is the fiber of the unique torus fixed point of $X_0$
by the resolution $\scM_\theta$ and $Z_i \setminus Z_{i+1}$ is an affine space
for $i \ge n+1$.
Hence $e$ coincides with the rank of the Chow group of $Z_n$,
which is equal to the rank of $K_0(\scM_\theta)$.

If we take the ordering so that the first $e$ cones are those of $\Sigma$,
then $Z_1 \setminus Z_{e+1}$ coincides with $\scM_\theta$
and $Z_i \setminus Z_{i+1}$ is an affine space for $i \le e$.
Then $e$ also coincides with the Euler number of $\scM_\theta$ and
the rank of the Chow group of $\scM_\theta$.
\end{proof}

The group $K(\Gamma) \cong K(\scM_\theta)$ is torsion-free
since $\scM_\theta$ is a toric variety.
There is a $\bC^*$-action on the representations of $\Gamma$ as in
\cite[Section 5]{Ishii-Ueda_08}
and the limit of an object in $\module_c \bC \Gamma$ with respect to this action
lies in $\module_0 \bC \Gamma$.
It follows that the map $K_0(\Gamma) \to N_c(\Gamma)$ is surjective.
This concludes the proof of the following:

\begin{proposition} \label{pr:perfect}
The map
$
 K_0(\Gamma) \simto N_c(\Gamma)
$
is an isomorphism,
the Euler form \eqref{eq:Euler_form1} is perfect, and
$([\scP_v])_{v \in V}$ and $([S_v])_{v \in V}$
are dual bases
of $K(\Gamma)$ and $N_c(\Gamma)$.
\end{proposition}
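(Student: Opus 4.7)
The proposition is essentially a packaging of the three ingredients already verified in the passage, fed into Lemma \ref{lm:dual_bases}. My plan is therefore to check, one by one, the three hypotheses of that lemma, and then read off all three conclusions of Proposition \ref{pr:perfect} at once.

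First, I would invoke the derived equivalence $\Phi_\theta$ coming from (\textbf{T})+(\textbf{E}) to identify $K(\Gamma)$ with $K(\scM_\theta)$, and its restriction (still called $\Phi_\theta$) to identify $K_0(\Gamma)$ with $K_0(\scM_\theta)$. The preceding lemma, proved by the Fulton-style cellular filtration of a suitable completion $X(\widetilde{\Sigma})$ of the fan of the smooth toric variety $\scM_\theta$, shows that both of these groups have rank equal to the Euler number of $\scM_\theta$; this takes care of hypothesis (i). For hypothesis (ii), I would use the cellular decomposition of $\scM_\theta$ coming from the torus action: since $\scM_\theta$ is a smooth toric variety, its Chow groups are free abelian and the Grothendieck group $K(\scM_\theta) \cong K(\Gamma)$ is therefore torsion-free.

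The most substantive step is the surjectivity hypothesis (iii), $K_0(\Gamma) \twoheadrightarrow N_c(\Gamma)$. The plan here is to use the $\bC^*$-action on representations of $\Gamma$ from \cite[Section 5]{Ishii-Ueda_08}. Given a class $[M] \in K_c(\Gamma)$, I would take the $\bC^*$-limit $M_0 := \lim_{t \to 0} t \cdot M$, which lies in $\module_0 \bC \Gamma$ because the action scales paths of positive length. The family $\{t \cdot M\}_{t \in \bC^*}$ degenerating to $M_0$ produces an equality $[M] = [M_0]$ in $K_c(\Gamma)$ up to a class that pairs trivially with every projective $P_v$ under the Euler form (the Euler characteristic is constant in flat families over $\bA^1$). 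Hence $[M]$ and $[M_0]$ have the same image in $N_c(\Gamma) = K_c(\Gamma)/\mathrm{rad}(\chi)$, so the image of $K_0(\Gamma)$ is everything.

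With (i)--(iii) in hand, Lemma \ref{lm:dual_bases} delivers simultaneously that $K_0(\Gamma) \to N_c(\Gamma)$ is an isomorphism and that $([P_v])_{v\in V}$ and $([S_v])_{v\in V}$ are dual bases for $\chi$. Perfection of the Euler form is then automatic: the matrix of $\chi$ with respect to these bases is the identity by the computation of $\Ext^i(P_v, S_w)$ displayed just before Lemma \ref{lm:dual_bases}. I expect the only subtle point to be the flat-family argument for surjectivity — in particular, making precise that the $\bC^*$-degeneration of a finite-dimensional $\bC\Gamma$-module is itself a flat family of such modules, so that Euler characteristics against each $P_v$ are preserved and thus the difference $[M] - [M_0]$ really lies in the radical of $\chi$.
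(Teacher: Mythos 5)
Your proposal is correct and follows essentially the same route as the paper: it verifies the three hypotheses of Lemma \ref{lm:dual_bases} using, respectively, the rank computation via the Fulton-style filtration, torsion-freeness of $K(\scM_\theta)$ for a smooth toric variety, and surjectivity of $K_0(\Gamma) \to N_c(\Gamma)$ via the $\bC^*$-degeneration into $\module_0 \bC\Gamma$. Your added remarks—spelling out that the degeneration only changes the class by an element of the radical of the Euler form, and that perfection follows because the dual bases make the Gram matrix of $\chi$ the identity—are exactly the details the paper leaves implicit.
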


The space $\bZ^V$ of dimension vectors can naturally be identified
with the numerical Grothendieck group $N_c(\Gamma)$.
The dual space $\Hom(\bZ^V, \bZ)$ is identified with
the Grothendieck group $K(\Gamma)$
by the Euler form,
and the space
$$
 \Theta = \{ \theta \in \Hom(\bZ^V, \bZ) \mid \theta(\chi_V) = 0 \}
$$
of stability parameters is identified with its codimension one subspace.

\section{The Grothendieck group and the Picard group}
 \label{sc:KvsPic}

Let $\theta \in \Theta$ be a generic stability parameter.
Assume that the conditions (\bfT)+(\bfE) hold
for the stability parameter $\theta$,
so that one has a derived equivalence
\begin{equation} \label{eq:der_equiv3}
 \Phi_C(-)
   = \bR \Gamma \lb \scE \otimes - \rb
   : D^b \coh \scM_\theta \simto D^b \module \bC \Gamma,
\end{equation}
where we have added the subscript $C$
to indicate the dependence
on the chamber $C \subset \Theta$ containing $\theta$.
%Let $D^b \coh_0 \scM_\theta$
%be the full subcategory of $D^b \coh \scM_\theta$
%consisting of complexes
%whose cohomology sheaves are supported
%on the fiber of the unique torus fixed point of $X_0$
%by the resolution $\scM_\theta \to X_0$,
%and $D^b \module_0 \bC \Gamma$
%be the full subcategory
%of $D^b \module \bC \Gamma$
%consisting of nilpotent representations.
The functor $\Phi_C$ induces a functor
\begin{equation} \label{eq:der_equiv4}
 \Phi_C
%   = \bR \Gamma \lb \lb \bigoplus_v \scL_v \rb \otimes - \rb
   : D^b \coh_0 \scM_\theta \simto D^b \module_0 \bC \Gamma,
\end{equation}
which is denoted by the same symbol
by abuse of notation.

Let $K(\scM_\theta)$ and $K_0(\scM_\theta)$
be the Grothendieck groups
of $D^b \coh \scM_\theta$ and $D^b \coh_0 \scM_\theta$
respectively.
One has a decreasing filtration
$$
 K(\scM_\theta)_\bQ = F^0
  \supset F^1 \supset F^2 \supset F^3 \supset F^4 = 0
$$
of $K(\scM_\theta)_\bQ = K(\scM_\theta) \otimes \bQ$
by codimension of the support, and
an increasing filtration
$$
 0 = F_{-1} \subset F_0 \subset F_1
  \subset F_2  = K_0(\scM_\theta)_\bQ
$$
of $K_0(\scM_\theta)_\bQ = K_0(\scM_\theta) \otimes \bQ$
by dimension of the support.
Since the Euler form
\begin{equation} \label{eq:Euler}
 \chi : K(\scM_\theta) \times K_0(\scM_\theta) \to \bZ, \quad
 (\alpha, \beta)
  \mapsto \sum_i (-1)^i \dim \Ext^i(\alpha, \beta)
\end{equation}
is perfect by \pref{pr:perfect},
one has
$$
 F^1 = F_0^\bot, \quad
 F^2 = F_1^\bot, \quad \text{and} \quad
 F^3 = F_2^\bot = 0
$$
with respect to the Euler form
\cite[Proposition 5.1]{Craw-Ishii}.
The equivalence \eqref{eq:der_equiv4} induces an isomorphism
$
 \varphi_C : K_0(\scM_\theta) \to K_0(\Gamma),
$
whose adjoint with respect to the pairing \eqref{eq:Euler}
gives an isomorphism
\begin{equation} \label{eq:varphi_C1}
 \varphi_C^* : K(\Gamma) \to K(\scM_\theta).
\end{equation}
The isomorphism \eqref{eq:varphi_C1} induces
an isomorphism
\begin{equation} \label{eq:varphi_C2}
 \varphi_C^* : \Theta \to F^1
\end{equation}
since
$
 \Theta
  \subset \Hom(\bZ^V, \bQ)
  \cong \Hom(K_0(\Gamma), \bQ)
  \cong K(\Gamma) \otimes \bQ
$
is the kernel of $\chi_V \in \bZ^V \cong K_0(\Gamma)$
and $\varphi_C^*(\chi_V)$ is
the class of the structure sheaf of a point.
Let
$$
 \det^{-1} : F^1 / F^2 \simto \Pic(\scM_\theta)_\bQ
$$
be the isomorphism
sending the class $[\scO_D] = [\scO] - [\scO(-D)]$
of the structure sheaf of a divisor $D$
to $\det(\scO_D)^{-1} = \scO(-D)$.
Note that one has
$$
 \varphi_C^*(\theta)
  = \varphi_C^* \lb \sum_{v \in V} \theta(v) [\scP_v] \rb
  = \sum_{v \in V} \theta(v) [\scL_v^{-1}],
$$
so that the map $L_C$ defined in \eqref{eq:L_theta} can be expressed as
$$
 L_C(\theta) = \det^{-1}([\varphi_C^*(\theta)])
  \in \Pic(\scM_\theta)_\bQ.
$$
This gives the commutative diagram
$$
\begin{CD}
 \Theta @>{\varphi_C^*}>> F^1 \\
  @V{L_C}VV @VV{p}V \\
 \Pic(Y)_\bQ @<{\sim}<< F^1/F^2.
\end{CD}
$$
In particular,
one has
\begin{equation} \label{eq:degree}
 \deg(L_C(\theta)|_\ell)
  = \sum_{v \in V} \chi(\scL_v \otimes \scO_\ell) \theta(v)
  = \theta(\varphi_C(\scO_\ell))
\end{equation}
for a curve $\ell$ on $\scM_\theta$.

Recall that
a {\em contraction} is a birational morphism
onto a normal variety
with a smaller Picard number.
A contraction is {\em primitive}
if it cannot be obtained
as the composition of two contractions.
According to Wilson
\cite{Wilson_KCCYT},
the codimension one faces of the ample cone
$\Amp(Y)$
of a Calabi-Yau 3-fold
correspond to primitive birational contractions,
which can be classified into the following three types:
\begin{description}
\item[type I :] $f$ contracts a curve to a point.
\item[type \II :] $f$ contracts a surface to a point.
\item[type \III :] $f$ contracts a surface to a curve.
\end{description}

\begin{lemma}
The morphism $f : \scM_\theta \to X_{\theta_0}$
is either a primitive contraction or an isomorphism.
\end{lemma}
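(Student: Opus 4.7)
The plan is to split the argument into two cases according to whether the nef line bundle $L_C(\theta_0) \cong f^* \scO_{X_{\theta_0}}(1)$ (nef by the preceding lemma) is ample on $\scM_\theta$. If it is ample, then for $n$ sufficiently large $L_C(\theta_0)^{\otimes n}$ defines a closed immersion, so $f$ is a finite morphism; combined with the fact that $f$ is birational---it restricts to the identity on the dense torus $\bT$ common to $\scM_\theta$ and $X_{\theta_0}$---and that $X_{\theta_0}$ is normal, Zariski's main theorem yields that $f$ is an isomorphism.

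Suppose now $L_C(\theta_0)$ lies on the boundary of $\Amp(\scM_\theta)$, so that $f$ is a nontrivial birational contraction. Since $\scM_\theta$ is a Calabi--Yau threefold, Wilson's classification (the types I, II, III listed just before the lemma) identifies primitive birational contractions with codimension-one faces of the nef cone $\overline{\Amp(\scM_\theta)}$. It therefore suffices to show that the minimal face of $\overline{\Amp(\scM_\theta)}$ containing $L_C(\theta_0)$ has codimension one.

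I exploit the commutative diagram preceding \eqref{eq:degree}: by \pref{pr:perfect}, the map $\varphi_C^* : \Theta_\bQ \to F^1$ is an isomorphism, and the diagram displays $L_C$ as the composition of this isomorphism with the quotient $p : F^1 \twoheadrightarrow F^1/F^2 \cong \Pic(\scM_\theta)_\bQ$; in particular $L_C$ is surjective, with kernel identifying under $\varphi_C^*$ with $F^2$. For every $\eta \in C$ the line bundle $L_C(\eta)$ equals the ample polarization $\scO_{\scM_\eta}(1)$ on $\scM_\eta = \scM_\theta$, so $L_C(C) \subset \Amp(\scM_\theta)$ is open and $L_C(\overline{C}) \subset \overline{\Amp(\scM_\theta)}$. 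Since $\theta_0$ is generic on the single wall $W_R$, it lies in the relative interior of the codimension-one face $W_R \cap \overline{C}$ of $\overline{C}$, so $L_C(W_R \cap \overline{C})$ is a face of $L_C(\overline{C})$ containing $L_C(\theta_0)$ in its relative interior.

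The main obstacle is to verify that this face has codimension one in $\Pic(\scM_\theta)_\bR$. A linear-algebra dimension count reduces this to the inclusion $\ker L_C \subset W_R$, equivalently (via $\varphi_C^*$) to $F^2 \subset \varphi_C^*(W_R)$. Translating through the Euler pairing, $W_R$ is cut out in $F^1$ by the class $\gamma_R := \varphi_C^{-1}(\sum_{v \in R}[S_v]) \in K_0(\scM_\theta)_\bQ$, and the required inclusion becomes $\gamma_R \in (F^2)^\perp = F_1$, i.e.\ $\gamma_R$ is represented by a sheaf supported in dimension at most one. Heuristically this should hold because the destabilizing submodule of dimension vector $\chi_R$ corresponds under the derived equivalence to an object living on the exceptional locus of $f$, and the single-wall assumption pins this locus down to a single extremal family of contracted curves. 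Making this rigorous is the delicate point; once achieved, Wilson's classification identifies $f$ as a primitive contraction of type I, II, or III.
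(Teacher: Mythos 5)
There is a genuine gap, and you flag it yourself: you never establish the inclusion $\ker L_C \subset W_{R}$ (equivalently $\gamma_R \in F_1$), and the route you sketch for closing it --- identifying the destabilizing module with an object supported on the contracted locus --- both forward-references the analysis of destabilizing sequences carried out only in later sections and cannot be a general fact about destabilizing modules: for a wall of type $0$ the paper shows that $\Phi^{-1}(S)$ is a line bundle on a compact \emph{divisor}, so its class lies in $F_2$ but not in $F_1$. Thus $\gamma_R \in F_1$ genuinely depends on the case hypothesis that $L_C(\theta_0)$ is not ample, and any proof of it that ignores that hypothesis is doomed.

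The good news is that the missing step follows from the case hypothesis by pure convexity, and this is exactly what the paper's one-line appeal to ``$L_C$ is submersive'' amounts to. Argue by contraposition: if $\ker L_C \not\subset W_R$, then $L_C$ restricted to the hyperplane $W_R$ is still surjective onto $\Pic(\scM_\theta)_\bQ$, so the convex cone $L_C(W_R \cap \overline{C})$ spans $\Pic(\scM_\theta)_\bR$ and has nonempty interior. Since $\theta_0$ lies in the relative interior of the facet $W_R \cap \overline{C}$ (this is what genericity on the wall gives you) and a linear map carries the relative interior of a convex set onto the relative interior of its image, $L_C(\theta_0)$ would then be an interior point of $\overline{\Amp(\scM_\theta)}$, i.e.\ ample, contradicting the assumption of your second case. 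Hence $\ker L_C \subset W_R$ is automatic there, the face has codimension one, and Wilson's classification applies. With this observation your argument closes and coincides with the paper's proof, which rests on the same two inputs (surjectivity of $L_C$ and the hypothesis that $\theta_0$ lies on no wall other than $W_R$); your treatment of the isomorphism case via Zariski's main theorem is fine, and is merely asserted in the paper.
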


\begin{proof}
It follows from the definitions of $X_{\theta_0}$
and the morphism $\scM_\theta \to X_{\theta_0}$
that $L_C(\theta_0) = f^* \scO_{X_{\theta_0}}(1)$.
If $L_C(\theta_0)$ is
in the interior of the ample cone,
then $f$ is an isomorphism.
If $L_C(\theta_0)$ is on the boundary of the ample cone,
then it is in the interior of a facet
since $\theta_0$ is assumed to be outside
of any other walls and
the map $L_C$ is submersive.
It follows that $f$ is a primitive contraction
in this case.
\end{proof}

The {\em type} of a wall in the parameter space
is defined as the type
of the corresponding primitive birational contraction.
If the morphism $f : \scM_\theta \to X_{\theta_0}$
is an isomorphism,
then it is called a wall of {\em type 0}.

\section{Variations of moduli spaces}
 \label{sc:variation_of_moduli}

Let $W_{R_1}$ be the wall separating $\theta$ and $\theta'$,
where $R_1 \subset V$ is the support
of a $\theta'$-destabilizing submodule $S$
of a $\theta$-stable $\bC\Gamma$-module $E$.
Then $R_2:=V \setminus R_1$ is
the support of the quotient module $Q=E/S$
and one has $W_{R_1} = W_{R_2}$.

The {\em unstable locus} is the set
\begin{equation} \label{eq:Z}
Z=\{ y \in \scM_{\theta} \mid \text{$E_y$ is not $\theta_0$-stable}\}
 \subset \scM_{\theta}
\end{equation}
of strictly $\theta_0$-semistable representations,
with the natural subscheme structure
defined by setting the values of the arrows
going out from $R_1$ to zero.
Since the values of the arrows in the universal
representation are given by square free monomials in local coordinates
given in \cite[Lemma 4.5]{Ishii-Ueda_08},
the subscheme $Z$ is reduced.

For a point $z \in Z$,
the $\theta'$-destabilizing sequence
of the $\theta$-stable representation $E_z$
parametrized by $z$ is written as
$$
0 \to S_z \to E_z \to Q_z \to 0,
$$
where $\dim S_z = \chi_{R_1}$, $\dim Q_z = \chi_{R_2}$ and
$\theta(\chi_{R_1}) = - \theta(\chi_{R_2}) > 0$,
$\theta'(\chi_{R_1}) = - \theta'(\chi_{R_2}) < 0$.

Since $\theta_0$ is assumed to be generic on the wall,
both $S$ and $Q$ are $\theta_0$-stable.
%(and hence $\theta$-stable and $\theta'$-stable)
The point $f(z) \in \scM_{\theta_0}$ parametrizes
the S-equivalence class of $[S_z \oplus Q_z]$.

\begin{lemma}%[{cf.~\cite[Lemma 3.7]{Craw-Ishii}}]
 \label{lm:fiber}
The fibers of the morphisms $f$ and $f'$
in \eqref{eq:VGIT}
at $f(z) \in Z_0$ are
$\bP(\Ext^1(Q_z, S_z)^\vee)$ and
%the fiber of $f'$ at $[S \oplus Q] \in Z_0$ is 
$\bP(\Ext^1(S_z, Q_z)^\vee)$
respectively.
\end{lemma}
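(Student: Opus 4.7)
The plan is to identify the closed points of each fiber with isomorphism classes of non-split extensions of one of the two $\theta_0$-stable composition factors by the other, and then to projectivize. A point of $f^{-1}(f(z))$ corresponds to a $\theta$-stable $\bC\Gamma$-module $E$ whose image in $\scMbar_{\theta_0}$ equals the S-equivalence class of $S_z \oplus Q_z$. Since $\theta_0$ is generic on the wall $W_{R_1}$, the $\theta_0$-stable composition factors of any module in this S-equivalence class are exactly $S_z$ and $Q_z$, each of multiplicity one. Of the two possible orderings of these factors, the subobject must be $S_z$ rather than $Q_z$: otherwise $Q_z \subset E$ would give $\theta(Q_z) = \theta(\chi_{R_2}) < 0$, contradicting $\theta$-stability. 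Hence every such $E$ sits in an exact sequence
$$
 0 \to S_z \to E \to Q_z \to 0,
$$
and conversely any non-split such extension is $\theta$-stable, since any proper nonzero submodule $F \subset E$ must, by the $\theta_0$-stability of $S_z$ and $Q_z$ together with non-splitness, coincide with $S_z$, on which $\theta$ is strictly positive.

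Next I would pass from extensions to the projective space. Since $S_z$ and $Q_z$ are non-isomorphic $\theta_0$-stable modules (their supports $R_1$ and $R_2$ differ), Schur's lemma yields $\Hom(S_z, Q_z) = \Hom(Q_z, S_z) = 0$ and $\End(S_z) = \End(Q_z) = \bC$. Consequently two classes in $\Ext^1_{\bC\Gamma}(Q_z, S_z)$ produce isomorphic middle terms if and only if they are proportional, so the set of isomorphism classes of non-split extensions is canonically the projectivization of $\Ext^1(Q_z, S_z)$, which in the $\Proj \operatorname{Sym}$ convention used in the paper reads $\bP(\Ext^1(Q_z,S_z)^\vee)$. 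The fiber of $f'$ at $f(z)$ is treated symmetrically: the sign of $\theta'$ on $\chi_{R_2}$ is positive, so $Q_z$ is now forced to be the subobject and the fiber becomes $\bP(\Ext^1(S_z,Q_z)^\vee)$.

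The main obstacle is upgrading this bijection on closed points to a scheme-theoretic isomorphism. For this I would construct the universal extension over $\bP(\Ext^1(Q_z,S_z)^\vee)$, invoke the moduli functor of $\scM_\theta$ to obtain a morphism from this projective space into the scheme-theoretic fiber, and then check that it is an isomorphism. Set-theoretic bijectivity is the content of the previous paragraphs, and together with the reducedness of $Z$ observed just after the definition of $Z$, a tangent space comparison using the deformation theory of representations, which identifies the Zariski tangent space to the fiber at $[E]$ with $\Ext^1(Q_z,S_z)$ modulo the one-dimensional scaling direction, completes the argument.
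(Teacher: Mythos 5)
Your overall strategy --- identify the fiber of $f$ with the set of isomorphism classes of non-split extensions of $Q_z$ by $S_z$, projectivize using simplicity of the two factors, and treat $f'$ symmetrically --- is exactly the paper's; the paper's own proof is essentially a two-sentence version of this argument, and your discussion of the scheme structure and of why proportional extension classes give isomorphic modules is more careful than what the paper writes. There is, however, one load-bearing step in your writeup that is false as stated: it is not true that every proper nonzero submodule $F$ of a non-split extension $E$ of $Q_z$ by $S_z$ must coincide with $S_z$. Any proper nonzero submodule of $S_z$ is such an $F$ (and these exist whenever $|R_1|>1$, since a finite-dimensional representation always has a one-dimensional simple submodule), and one can also have $F \cap S_z = S_z$ with nonzero proper image in $Q_z$. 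What the $\theta_0$-stability of $S_z$ and $Q_z$ together with non-splitness actually gives is the weaker statement that $\theta_0(F) \ge 0$ for every proper nonzero $F \subset E$, with equality forcing $F = S_z$; for the submodules with $\theta_0(F) > 0$ you still owe an argument that $\theta(F) > 0$, and this is not automatic because $\theta$ is a fixed parameter in the chamber $C$, not an infinitesimal perturbation of $\theta_0$. The standard repair is to replace $\theta$ by $\eta = \theta_0 + \epsilon(\theta - \theta_0)$ for small $\epsilon > 0$: such $\eta$ lies in $C$ and hence defines the same stable objects as $\theta$, one has $\eta(S_z) = \epsilon\,\theta(\chi_{R_1}) > 0$, and $\eta(F) > 0$ for the finitely many remaining dimension vectors because $\theta_0(F) > 0$ there. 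With that substitution (and the same fix on the $\theta'$ side) the rest of your argument goes through; note that the paper itself asserts this stability claim without proof, so your attempt to justify it is welcome, but as written the justification does not hold.
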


\begin{proof}
Any $\theta$-stable representation of $\Gamma$
in the same S-equivalence class as $[S_z \oplus Q_z]$
is obtained as a non-trivial extension of $Q_z$ by $S_z$, and
$\theta$-stability of $S_z$ and $Q_z$ implies that
any such extension is $\theta$-stable.
Similarly, 
$\theta'$-stable representation of $\Gamma$
in the same S-equivalence class as $[S_z \oplus Q_z]$
is obtained as a non-trivial extension of $S_z$ by $Q_z$, and
$\theta'$-stability of $S_z$ and $Q_z$ implies that
any such extension is $\theta'$-stable.
\end{proof}

%The projectivity of $f$ and $f'$
%%\ref{it:projective}
%comes
%from the construction
%of the moduli spaces
%by King \cite{King}.
%%\ref{it:surjectivity}
%The unique 3-dimensional irreducible component
%of $\scMbar_{\theta_0}$
%is obtained as the closure of $\bT \subset \scMbar_{\theta_0}$,
%and other irreducible components,
%if any, have dimensions less than or equal to two.
%The surjectivity of $f$ and $f'$
%to this component is proved
%in \cite{Craw-Ishii}.

Recall from Section \ref{sc:definitions} that
the set $V$ of vertices
is the set of connected components
of the complement $T \setminus (\bigcup_{e \in E} e)$
of the union of edges of the dimer model.
We identify $R_1$ and $R_2$
with the closures of $\bigcup_{v \in R_1} v$
and $\bigcup_{v \in R_2} v$ in $T$ respectively.

\begin{lemma} \label{lm:R_connected}
$R_1$ and $R_2$ are connected.
\end{lemma}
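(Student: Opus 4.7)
The plan is to argue by contradiction, using the $\theta_0$-stability of the destabilizing factors $S$ and $Q$, which is available because $\theta_0$ was chosen generic on the wall. I would show that any disconnection of $R_1$ in $T$ forces a nontrivial direct sum decomposition of the $\theta_0$-stable module $S$, and symmetrically for $R_2$ using $Q$.

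More precisely, suppose $R_1 \subset T$ is disconnected, so $R_1 = R_1' \sqcup R_1''$ for nonempty disjoint closed subsets corresponding to a partition $V_1 = V_1' \sqcup V_1''$ of the underlying set of faces. The first step is to translate this topological disconnection into a statement about the quiver $\Gamma$: if some face $v \in V_1'$ and some face $v' \in V_1''$ shared any node of the dimer model, that node would lie in $R_1' \cap R_1''$, contradicting disjointness. In particular they share no edge, so no arrow of $\Gamma$ joins $V_1'$ to $V_1''$. This lets me split $S$ as a $\bC\Gamma$-module into $S = S' \oplus S''$, where $S'$, respectively $S''$, is the restriction of $S$ to the vertices in $V_1'$, respectively $V_1''$; these are nontrivial proper submodules of dimension vectors $\chi_{R_1'}$ and $\chi_{R_1''}$ respectively.

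The second step is to reach a contradiction from stability. Since $\theta_0 \in W_{R_1}$, one has $\theta_0(\dim S) = \theta_0(\chi_{R_1}) = 0$, while $\theta_0(\dim S') + \theta_0(\dim S'') = \theta_0(\chi_{R_1}) = 0$. Hence at least one of $\theta_0(\dim S'), \theta_0(\dim S'') \le 0$, which directly contradicts the $\theta_0$-stability of $S$. The same argument, applied to the $\theta_0$-stable quotient $Q$ with dimension vector $\chi_{R_2}$ (for which $\theta_0(\chi_{R_2}) = -\theta_0(\chi_{R_1}) = 0$ as well), shows that $R_2$ is connected.

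I do not anticipate a real obstacle here. The only point that requires a moment's thought is the topological translation in the first step, namely that a disconnection in $T$ of the union of face-closures forces the absence of shared dimer nodes, and hence of arrows, between the two pieces; this is immediate from the fact that the quiver $\Gamma$ is the edge-dual of the dimer model, so arrows correspond to shared edges, and any shared node would force intersection of the two closed pieces of $R_1$.
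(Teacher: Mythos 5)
Your proof is correct and follows essentially the same route as the paper's: a disconnection of $R_1$ forces a direct sum decomposition $S = S' \oplus S''$ into nonzero submodules supported on the two pieces, which is incompatible with the $\theta_0$-stability of $S$ (the paper leaves the final stability contradiction implicit, whereas you spell out that $\theta_0(\dim S') + \theta_0(\dim S'') = 0$ forces one summand to violate stability). The same argument applied to $Q$ handles $R_2$, exactly as in the paper.
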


\begin{proof}
Suppose $R_1=R_{1,1} \cup R_{1,2}$ is a disjoint union
of two open subsets of $R_1$.
Then $S_z$ is a direct sum of two submodules
whose dimension vectors are
$\chi_{R_{1,1}}$ and $\chi_{R_{1,2}}$ respectively.
%Since $E_z$ is $\eta$-semistable for any $\eta \in W_{R_1}$,
%we have $\eta(\chi_{R_{1,1}})=\eta(\chi_{R_{1,2}})=0$.
If both $R_{1,1}$ and $R_{1,2}$ are non-empty,
then $S_z$ is the direct sum of two non-zero submodules,
which cannot be $\theta_0$-stable.
%then $\eta(\chi_{R_{1,1}})$ and $\eta(\chi_{R_{1,2}})$ are
%independent linear function on $\eta$,
%which contradicts $\codim W_{R_1}=1$.
Hence $R_1$ is connected.
The connectedness of $R_2$ is proved similarly.
\end{proof}

\begin{lemma} \label{lm:simple}
$S_z$ and $Q_z$ are simple;
$\End(S_z) = \bC \cdot \id_{S_z}$ and
$\End(Q_z) = \bC \cdot \id_{Q_z}$.
\end{lemma}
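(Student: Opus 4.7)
The plan is to invoke the standard Schur-type argument for stable objects, using the $\theta_0$-stability of $S_z$ and $Q_z$ recorded just before the lemma. By symmetry between the two summands, and because $W_{R_1}=W_{R_2}$, it suffices to prove $\End(S_z) = \bC \cdot \id_{S_z}$; the argument for $Q_z$ is identical, with the roles of $R_1$ and $R_2$ interchanged.

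Given any endomorphism $\phi \colon S_z \to S_z$, I would consider its kernel $K$ and image $I$, both of which are $\bC\Gamma$-submodules of $S_z$. From the exact sequence $0 \to K \to S_z \to I \to 0$ one has the identity of dimension vectors $\dim K + \dim I = \dim S_z = \chi_{R_1}$. Applying $\theta_0$ and using $\theta_0 \in W_{R_1}$ yields
$$
 \theta_0(\dim K) + \theta_0(\dim I) = \theta_0(\chi_{R_1}) = 0.
$$
If $\phi$ is neither zero nor an isomorphism, then, since $S_z$ is finite-dimensional (so that injectivity and surjectivity of $\phi$ are equivalent for dimension reasons), both $K$ and $I$ are non-zero proper submodules of $S_z$. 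The $\theta_0$-stability of $S_z$ would then force $\theta_0(\dim K) > 0$ and $\theta_0(\dim I) > 0$, contradicting the displayed identity. Consequently every non-zero endomorphism of $S_z$ is invertible, so $\End(S_z)$ is a finite-dimensional division algebra over the algebraically closed field $\bC$, and therefore $\End(S_z) = \bC \cdot \id_{S_z}$.

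There is no substantive obstacle: the argument is just the familiar Schur lemma for stable objects, specialized to the present quiver-representation setting. The only ingredients are the definition of $\theta_0$-stability from Section~\ref{sc:moduli} and the fact that $\dim S_z = \chi_{R_1}$ lies in the kernel of $\theta_0$, both of which are already in hand.
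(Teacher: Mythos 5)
Your proof is correct and is exactly the argument the paper intends: the paper's proof is the one-line remark that the lemma ``follows from the $\theta_0$-stability,'' i.e.\ the standard Schur-lemma argument for stable objects, which you have simply written out in full (kernel and image of an endomorphism are submodules, $\theta_0(\chi_{R_1})=0$ forces one of them to be trivial, and a finite-dimensional division algebra over $\bC$ is $\bC$). No issues.
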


\begin{proof}
This follows from the $\theta_0$-stability.
%Since the dimension vector of $S_z$ is {\em thin}
%in the sense that $(\dim S_z)(v)$ is $0$ or $1$ for any $v \in V$,
%the existence of a non-identity endomorphism
%implies that $S_z$ decomposes into a direct sum
%of proper subobjects.
%Since $\theta_0$ is generic on the wall,
%one of them destabilizes $S_z$ and
%contradicts the $\theta_0$-stability of $S_z$.
%The same argument also shows
%the simplicity of $Q_z$.
\end{proof}

%The following is a generalization
%of a claim in the proof of
%\cite[Proposition 10.6]{Craw-Ishii}:

\begin{lemma} \label{lm:dim_Ext}
If $z \in Z$ is in a two-dimensional $\bT$-orbit,
then $\dim \Ext^1(Q_z, S_z)$ is the number of connected components
of the boundary $\partial R_1$ of $R_1$.
\end{lemma}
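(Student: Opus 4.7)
The plan is to compute $\Ext^1(Q_z, S_z)$ using the Calabi--Yau $3$ bimodule resolution of $\bC\Gamma$ coming from the superpotential of the dimer model $G$. Write $A_{ij}$ for the set of arrows with source in $R_i$ and target in $R_j$. Applied to the pair $(Q_z, S_z)$, the resolution yields a four-term complex whose outer terms $\Hom(Q_z, S_z)$ vanish since $R_1 \cap R_2 = \emptyset$, while the inner terms collapse to $\bC^{A_{21}}$ and $\bC^{A_{12}}$ respectively. Thus
\[
 \Ext^1(Q_z, S_z) = \ker \left( d_1 \colon \bC^{A_{21}} \to \bC^{A_{12}} \right).
\]

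The differential $d_1$ has a direct interpretation in terms of extensions: a vector $(\alpha_a)_{a \in A_{21}}$ describes an extension $E(\alpha)$ of $Q_z$ by $S_z$ whose structure maps coincide with those of $E_z$ on arrows internal to $R_1$ or $R_2$, vanish on $A_{12}$, and equal $\alpha_a$ on $A_{21}$. The condition $d_1(\alpha) = 0$ is that $E(\alpha)$ satisfies the quiver relations; only relations indexed by $b \in A_{12}$ give nontrivial constraints, each of the form $p_+(b) = p_-(b)$ evaluated on $E(\alpha)$.

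The core of the proof is a local analysis along $\partial R_1$. By the orientation convention for arrows (white node on the right), at each boundary node of $\partial R_1$ the two boundary edges contribute exactly one arrow to $A_{12}$ and one to $A_{21}$, regardless of the color of the node. Hence the $A_{12}$- and $A_{21}$-arrows alternate along every connected component $\gamma$ of $\partial R_1$. The hypothesis that $z$ lies in a two-dimensional $\bT$-orbit means that the set of arrows vanishing at $z$ is a perfect matching, which is forced by the submodule condition to contain $A_{12}$; consequently $\psi_a(z) \ne 0$ for every arrow $a \notin A_{12}$. For $b \in A_{12}$ this makes the small cycles $p_+(b)$ and $p_-(b)$ easy to evaluate in $E(\alpha)$: any contribution from a path using an $A_{12}$-arrow other than $b$ itself vanishes, so only paths crossing $\partial R_1$ through a single $A_{21}$-edge adjacent to $b$ along $\gamma$ remain. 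The equation $d_1(\alpha)_b = 0$ therefore involves only the two $\alpha$-values on the $A_{21}$-arrows immediately adjacent to $b$ along $\gamma$.

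The system $d_1\alpha = 0$ thus decouples as a direct sum of cyclic systems, one per connected component of $\partial R_1$. Each cyclic system propagates a single scalar around the loop $\gamma$, and has one-dimensional kernel once the holonomy around $\gamma$ is trivial; summing the kernels over components gives the desired formula. The main technical hurdle I expect is verifying this triviality of monodromy, which should reduce to a direct computation showing that the product of the propagation coefficients around each $\gamma$ equals $1$, using the combinatorics of the perfect matching together with the non-degeneracy of $G$.
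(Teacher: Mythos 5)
Your strategy is at bottom the same as the paper's: identify $\Ext^1(Q_z,S_z)$ with the space of assignments of scalars to the arrows from $R_2$ to $R_1$ (the paper classifies extensions directly rather than invoking the superpotential bimodule resolution, but for $\Ext^1$ these are the same computation), observe that the relation attached to each $b\in A_{12}$ equates, up to nonzero factors, the values on the two $A_{21}$-arrows adjacent to $b$ along its boundary component, and conclude one degree of freedom per component of $\partial R_1$. Two steps, however, need repair. The lesser one: your claim that $\psi_a(z)\ne 0$ for every arrow $a\notin A_{12}$ is false. The vanishing arrows form a perfect matching $D$ containing $A_{12}$, and a perfect matching must cover every node, including those interior to $R_1$ and $R_2$, so $D$ strictly contains $A_{12}$ in general. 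What you actually need, and what is true, is that at each of the two nodes of $b\in A_{12}$ the unique $D$-edge is $b$ itself, so the complementary cycles $p_{\pm}(b)$ meet no other vanishing arrow of $E_z$; and that relations attached to arrows outside $A_{12}$ impose no condition on $\alpha$, since any path from $R_1$ to $R_2$ passes through an $A_{12}$-arrow and the $\alpha$-free terms already vanish because $E_z$ satisfies the relations.

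The more serious issue is that you leave the triviality of the holonomy around each component $\gamma$ as an anticipated ``technical hurdle.'' That is precisely the step that decides whether a component contributes $1$ or $0$ to $\dim\Ext^1$, so the proof is incomplete without it. The paper disposes of it by using the $\bT$-action to normalize all non-vanishing arrow values of $E_z$ to $1$, after which every propagation coefficient is literally $1$ and the cyclic system visibly has a one-dimensional kernel per component. Alternatively, note that no $A_{21}$-arrow lies in $D$ (each of its endpoints already carries its $D$-edge from $A_{12}$), so $E_z$ itself furnishes a solution of your cyclic system that is nonzero on every $A_{21}$-arrow; a chain of rank-one relations around a loop admitting a nowhere-vanishing solution has trivial holonomy. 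Either one-line argument closes the gap, but as written the dimension count is not established.
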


\begin{proof}
According to \cite[Lemma 6.1]{Ishii-Ueda_08},
a point $z \in Z$ is in a two-dimensional $\bT$-orbit
if and only if the arrows whose values are zero
in the representation $E_z$ form a perfect matching.
By using the $\bT$-action, we may assume that
all the non-zero values of arrows are $1$.
Since $S_z$ is a submodule of $E_z$,
one must have $\psi_a = 0$
for any arrow $a$ from $R_1 = \Supp S_z$
to $R_2 = \Supp Q_z = V \setminus R_1$.
This means that $R_1$ is a ``sink''
in the sense that any arrows going from $R_1$
is blocked by the perfect matching.

To give an extension
\begin{equation} \label{eq:extension}
 0 \to S_z \to E \to Q_z \to 0
\end{equation}
of $Q_z$ by $S_z$ is the same thing
as to assign numbers $\psi_a \in \bC$ to each arrows $a$
from $R_2 = \Supp Q$ to $R_1 = \Supp S$.
If two arrows $a$ and $b$ from $R_2$ to $R_1$
cross the same connected component of $\partial R_1$,
then one must have $\psi_a = \psi_b$
by the relations of the quiver.
If they cross different components, then the values are independent.
Since $\Ext^1(Q_z, S_z)$ classifies extensions
of the form \eqref{eq:extension},
Lemma \ref{lm:dim_Ext} is proved.
\end{proof}

\begin{lemma}%[{cf.~\cite[Proposition 3.8]{Craw-Ishii}}]
 \label{lm:abscence_II}
There are no walls of type \II.
\end{lemma}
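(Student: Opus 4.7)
The plan is to argue by contradiction: assume the wall separating chambers $C$ and $C'$ is of type~II. Then $f \colon \scM_\theta \to X_{\theta_0}$ contracts an irreducible exceptional divisor onto a single $\bT$-fixed point $p \in X_{\theta_0}$, and the unstable locus $Z$ coincides with this divisor as a $\bT$-invariant two-dimensional subvariety. By \pref{lm:fiber}, the fiber $f^{-1}(p) = Z$ is isomorphic to the projective space $\bP(\Ext^1(Q_z, S_z)^\vee)$ for any $z \in Z$, so $Z \cong \bP^2$ and $\dim \Ext^1(Q_z, S_z) = 3$.

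Next I would pick $z \in Z$ in a two-dimensional $\bT$-orbit, which exists generically in $Z$ since $Z$ is an irreducible two-dimensional $\bT$-invariant subvariety of the toric threefold $\scM_\theta$, hence itself a toric surface. \pref{lm:dim_Ext} then identifies the number $k$ of connected components of $\partial R_1 \subset T$ with $\dim \Ext^1(Q_z, S_z) = 3$.

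The contradiction then comes from the topology of the two-torus. By \pref{lm:R_connected}, the closed subsurfaces $R_1, R_2 \subset T$ are both connected and partition $T$ with common boundary $\partial R_1 = \partial R_2$ a disjoint union of $k = 3$ simple closed curves. Additivity of the Euler characteristic gives $0 = \chi(T) = \chi(R_1) + \chi(R_2)$. On the other hand, each $R_i$ is a connected compact surface with three boundary circles and nonnegative genus $g_i$, so $\chi(R_i) = 2 - 2g_i - 3 \le -1$, whence $\chi(R_1) + \chi(R_2) \le -2$, a contradiction. Thus no wall is of type~II.

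The main obstacle is conceptual rather than computational: combining the rigidity imposed by \pref{lm:fiber}---which forces the exceptional fiber to be a projective space, hence $\bP^2$---with the topological obstruction that three disjoint simple closed curves cannot separate the torus into two connected pieces. The one technical point requiring care is verifying that for $z$ in a two-dimensional $\bT$-orbit the boundary $\partial R_1$ is genuinely a disjoint union of simple closed curves so that $R_1, R_2$ are honest compact surfaces with boundary; this should follow from the perfect-matching description of such $z$ employed in the proof of \pref{lm:dim_Ext}.
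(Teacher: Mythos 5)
Your argument is correct and follows the paper's proof essentially step for step: take $z$ in a two-dimensional $\bT$-orbit on the contracted surface, deduce $\dim \Ext^1(Q_z, S_z) = 3$ from Lemma \ref{lm:fiber}, hence three connected components of $\partial R_1$ from Lemma \ref{lm:dim_Ext}, and contradict the connectedness of $R_1$ and $R_2$ (Lemma \ref{lm:R_connected}) via the topology of the two-torus. The only difference is that you spell out, through the Euler characteristic count $\chi(R_1) + \chi(R_2) = \chi(T) = 0$ against $\chi(R_i) = 2 - 2g_i - 3 \le -1$, the topological impossibility that the paper merely asserts.
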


\begin{proof}
Assume that
$f : \scM_\theta \to X_{\theta_0}$ contracts
a surface to a point,
and take a point $z$
in a two-dimensional $\bT$-orbit
on the contracted surface.
Then for the $\theta'$-destabilizing sequence
$$
 0 \to S_z \to E_z \to Q_z \to 0,
$$
one has
$\dim \Ext^1(Q_z, S_z) = 3$
by Lemma \ref{lm:fiber},
so that the the boundary of $R_1$ has
three connected components
by Lemma \ref{lm:dim_Ext}.
Lemma \ref{lm:R_connected}
shows that both $R_1$ and $R_2$ are connected.
On the other hand,
it is impossible to divide the torus $T$
into two connected pieces $R_1$ and $R_2$
in such a way that the boundary of $R_1$ has
three connected components,
and Lemma \ref{lm:abscence_II} is proved.
\end{proof}

\section{Variations of tautological bundles}
 \label{sc:variation_of_bundles}

Assume that the conditions (\bfT)+(\bfE) hold for $\theta$, and
consider the diagram \eqref{eq:VGIT}.
The resulting birational morphism
$\scM_\theta \dashrightarrow \scM_{\theta'}$
is either a flop or %in the type I case and
an isomorphism %in the type 0 or type \III\  case.
since both are crepant resolutions of $X_{\theta_0}$
by \cite[Proposition 5.1]{Ishii-Ueda_08}.
In either cases,
the strict transformation
induces an isomorphism
$$
 \Pic \scM_\theta \simto \Pic \scM_{\theta'}
$$
of Picard groups.
Let $\scL_v$ and $\scL_v'$ be the tautological bundles
on $\scM_\theta$ and $\scM_{\theta'}$ respectively.

We prove the following in this section:

\begin{theorem} \label{th:Xi}
There is an equivalence
$$
 \Xi : D^b \coh \scM_\theta \simto D^b \coh \scM_\theta
$$
of the derived category such that
$\Xi(\scL_v) = \scL_v'$
for any $v \in V$.
\end{theorem}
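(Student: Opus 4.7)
The plan is to construct $\Xi$ directly as a Fourier--Mukai transform with kernel supported on the fibre product $\mathcal{Z} = \scM_\theta \times_{X_{\theta_0}} \scM_{\theta'}$, and then verify $\Xi(\scL_v) \cong \scL_v'$ by separating the open locus where $f$ and $f'$ are isomorphisms from the exceptional locus $Z$, exploiting the explicit fibre descriptions in \pref{lm:fiber} and \pref{lm:dim_Ext} together with the exclusion of type II walls from \pref{lm:abscence_II}.

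First I would dispose of the type 0 case, where both $f$ and $f'$ are isomorphisms onto $X_{\theta_0}$: then $\scM_\theta \cong \scM_{\theta'}$ canonically over $X_{\theta_0}$, and the common normalization $\scL_{v_0} \cong \scO$ forces $\scL_v \cong \scL_v'$ vertex by vertex, so one may take $\Xi = \id$. For types I and III one defines
$$
 \Xi = \bR p_{2,*} \circ \bL p_1^* : D^b \coh \scM_\theta \to D^b \coh \scM_{\theta'},
$$
where $p_1, p_2$ are the projections from the reduced fibre product $\mathcal{Z}$ onto the two factors. That $\Xi$ is an equivalence follows from the local Atiyah-flop structure at the exceptional fibres: \pref{lm:fiber} combined with the dimension count from \pref{lm:dim_Ext} (using the absence of type II) forces each exceptional fibre over a $2$-dimensional $\bT$-orbit to be a $\bP^1$ on both sides, so the Bondal--Orlov flop equivalence, extended by $\bT$-equivariance along the contracted locus, applies.

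To verify $\Xi(\scL_v) \cong \scL_v'$, one restricts to the open locus $U \subset \scM_\theta$ where $f|_U$ is an isomorphism; there the universal families match tautologically and so $\scL_v|_U$ coincides with the pullback of $\scL_v'$ under the identification. The strict-transform isomorphism $\Pic \scM_\theta \simto \Pic \scM_{\theta'}$ recalled at the start of the section then pins down the extension across the flop uniquely, so it suffices to show $\Xi(\scL_v)$ is concentrated in degree zero. This is a cohomological check: the higher direct images $\bR^{>0} p_{2,*} \bL p_1^* \scL_v$ vanish because $\scL_v$ has non-negative degree on each $\bP^1$-fibre of $f$, which can be read off from the degree formula \eqref{eq:degree} and the chamber description \pref{lm:chamber1}.

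The principal obstacle is this last cohomological step: guaranteeing that $\Xi(\scL_v)$ is a genuine line bundle rather than a higher complex, and that it matches $\scL_v'$ on the nose rather than up to a twist. The Picard identification reduces this to a degree-zero vanishing along the exceptional fibres, whose verification ultimately rests on the explicit local monomial coordinates of the universal representation provided by \cite[Lemma 4.5]{Ishii-Ueda_08} and the fibre-dimension bounds established in the previous section.
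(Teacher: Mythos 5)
The fatal gap is your treatment of the type 0 case. You assert that when $f$ and $f'$ are both isomorphisms the identification $\scM_\theta \cong \scM_{\theta'}$ over $X_{\theta_0}$ together with the normalization $\scL_{v_0} \cong \scO$ forces $\scL_v \cong \scL_v'$ for every $v$, so that one may take $\Xi = \id$. This is false, and the type 0 case is precisely where almost all of the work in the paper's proof is concentrated. Although the underlying varieties coincide, the universal families differ along the unstable locus $Z$: over $z \in Z$ the family on $\scM_\theta$ carries the nonsplit extension of $Q_z$ by $S_z$, while the family on $\scM_{\theta'}$ carries the nonsplit extension of $S_z$ by $Q_z$, and these are non-isomorphic modules. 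The normalization at $v_0$ only fixes the overall twist by a single line bundle; it cannot make the two families agree. The paper shows that $Z$ is a compact connected Cartier divisor and that, by the elementary transform of \pref{lm:tautological_type0}, $\scL'_w$ differs from $\scL_w$ by $\scO(\pm Z)$ for the vertices $w$ on one side of the wall. Hence no identity functor satisfies $\Xi(\scL_v) = \scL_v'$; the correct $\Xi$ is the spherical twist $T_\scF$ (or $T_\scF^{-1}$), where $\scF = \scL_v(Z)|_Z$ is shown to be a spherical object via the rigidity of the destabilizing subobject or quotient, possibly composed with tensoring by $\scO(\mp Z)$. Establishing this — compactness and connectedness of $Z$, the computation $\Phi^{-1}(S) \cong \scL_v^\vee|_Z$, sphericality of $\scF$ — is the substance of the theorem, and it is also the ingredient needed later for Corollary \ref{cr:crepant_resolution}.

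There are secondary problems in the other cases. For type I your kernel is essentially the paper's, but the cohomological check "$\scL_v$ has non-negative degree on each fibre" fails when $v_0 \in R_1$: by \pref{lm:degree1} the degree on $\ell$ is then $0$ or $-1$, and one must use the modified functor $\bR q_*(\scO_{\scMtilde}(E) \otimes \bL p^*(-))$ to hit the strict transforms; the choice of functor depends on which side of the wall contains $v_0$. For type {\III} the birational map is not a flop at all — a divisor is contracted to a curve and $\scM_\theta \dashrightarrow \scM_{\theta'}$ extends to an isomorphism — so the "local Atiyah-flop'' picture and Bondal--Orlov do not apply there; the fibre product has two irreducible components, the equivalence is an EZ-transform, and the bundles again change by $\scO(\pm D)$ as in \pref{lm:tautological_type3_s}. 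Finally, note that in the type 0 and type {\III} cases the locus where the families differ is a divisor, so agreement of $\Xi(\scL_v)$ with $\scL_v'$ on the open locus where $f$ is an isomorphism does not determine the line bundle; your appeal to the strict-transform isomorphism on $\Pic$ only suffices in the codimension-two (type I) case.
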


This result is proved
in \cite{Craw-Ishii}
for McKay quivers
by assuming the conditions (\bfT)+(\bfE)
for both $\theta$ and $\theta'$.
%\cite[Proposition 5.5(ii) and Corollary 5.6]{Craw-Ishii}. 
Here
we work with general dimer models,
and we assume the conditions (\bfT)+(\bfE)
only for $\theta$.
Since there are only finitely many walls,
\pref{th:change_of_stability} is an immediate consequence
of \pref{th:Xi}.

\subsection{Type 0 case}

Assume that
$
 f : \scM_\theta \to X_{\theta_0}
$
is an isomorphism,
so that
the diagram \eqref{eq:VGIT} induces
an isomorphism
$
 \phi : \scM_\theta \simto \scM_{\theta'}
$
of moduli spaces,
and only tautological bundles change.

Recall from \eqref{eq:Z}
that the unstable locus $Z$ is defined
by setting the arrows going out from $R_1$ to zero.
The zero locus of each arrow is a union of reduced toric divisors,
and $Z$ is the intersection of such divisors.
\begin{lemma}
Every irreducible component of $Z$ is two-dimensional.
\end{lemma}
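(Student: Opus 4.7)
The scheme $Z \subset \scM_\theta$ is reduced, as noted in the excerpt, and $\bT$-invariant, since it is cut out by the $\bT$-semi-invariant equations $\psi_a = 0$ for $a$ going from $R_1$ to $R_2$. Hence every irreducible component is the closure of a $\bT$-orbit, i.e.\ a toric subvariety of the smooth toric $3$-fold $\scM_\theta$. Because a generic representation is $\theta$-stable, $Z$ is a proper subscheme, so the possible dimensions are $0$, $1$, or $2$, and the task is to rule out components of dimension $\le 1$.

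I would work locally at a toric fixed point $p \in Z$, corresponding to a three-dimensional cone $\tau$ in the fan of $\scM_\theta$ with rays $\rho_1, \rho_2, \rho_3$ and associated perfect matchings $\mu_1, \mu_2, \mu_3$. By the description of the universal representation given in \cite[Section 4]{Ishii-Ueda_08}, in the toric chart $\operatorname{Spec} \bC[x_1, x_2, x_3]$ around $p$ each $\psi_a$ becomes a monomial $\prod_i x_i^{n_i(a)}$ with $n_i(a) > 0$ precisely when $a \in \mu_i$. Consequently $Z$ near $p$ is
\[
\bigcap_{a \in (R_1 \to R_2)}\; \bigcup_{i:\, a \in \mu_i} \{x_i = 0\},
\]
whose irreducible components are the coordinate subspaces $\bigcap_{i \in I}\{x_i = 0\}$ for minimal subsets $I \subset \{1,2,3\}$ satisfying $(R_1 \to R_2) \subset \bigcup_{i \in I} \mu_i$; such a component has dimension $3 - |I|$. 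The lemma is therefore equivalent to the combinatorial claim that, for every toric fixed point $p \in Z$, some single $\mu_i$ already contains all arrows in $(R_1 \to R_2)$.

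To establish this combinatorial claim, I would invoke the type 0 hypothesis that $f : \scM_\theta \to X_{\theta_0}$ is an isomorphism. By Lemma \ref{lm:fiber}, the fiber of $f$ over $f(z)$ is $\bP(\Ext^1(Q_z, S_z)^\vee)$; since $f$ is an isomorphism every such fiber is a single point, forcing $\dim \Ext^1(Q_z, S_z) = 1$ for every $z \in Z$, including $z = p$. Combined with the connectedness of $R_1$ and $R_2$ (Lemma \ref{lm:R_connected}) and an extension of the Ext computation in Lemma \ref{lm:dim_Ext} from two-dimensional orbits to lower-dimensional ones, this should force all arrows of $(R_1 \to R_2)$ to be concentrated within a single perfect matching among $\mu_1, \mu_2, \mu_3$, yielding $|I| = 1$.

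The hard part will be the last step: extending the combinatorial Ext-computation of Lemma \ref{lm:dim_Ext} from two-dimensional $\bT$-orbits to the toric fixed point $p$, where three perfect matchings are simultaneously ``activated'' in the universal representation, and translating the single constraint $\dim \Ext^1(Q_p, S_p) = 1$ into the statement that some unique $\mu_i$ contains $(R_1 \to R_2)$. The connectedness of $\partial R_1$ established earlier and careful bookkeeping of how arrows crossing $\partial R_1$ are grouped by the relations of the quiver should suffice.
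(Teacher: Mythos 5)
Your proposal is not a complete proof: the entire weight of the argument rests on the final combinatorial step, which you explicitly defer (``an extension of the Ext computation \ldots should force \ldots'', ``The hard part will be the last step \ldots should suffice''). Translating $\dim \Ext^1(Q_p, S_p) = 1$ at a torus fixed point $p$ into the statement that a single perfect matching $\mu_i$ contains all arrows from $R_1$ to $R_2$ is precisely the content of the lemma in your reformulation, and you give no argument for it. Note also that Lemma \ref{lm:dim_Ext} as stated computes $\Ext^1$ only for $z$ in a two-dimensional $\bT$-orbit, where the zero locus of the arrows is a single perfect matching; at a fixed point three perfect matchings are simultaneously activated and the extension problem is genuinely different (the values $\psi_a$ of arrows crossing a given component of $\partial R_1$ are no longer forced to be equal by the relations, since the relevant paths may be blocked by the other matchings). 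So the reduction is sound but the theorem is not proved.

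The paper's own proof is entirely different and much shorter: a flat family of representations is a collection of line bundles $\scL_v$ together with maps between them, i.e.\ sections of line bundles, and on a normal variety such a section is determined by its restriction to the complement of a closed subset of codimension at least two. Since the families on $\scM_\theta$ and $\scM_{\theta'}$ coincide exactly on $\scM_\theta \setminus Z$ (here $f$ is type $0$, so $\scM_\theta \cong \scM_{\theta'}$ as varieties), any component of $Z$ of codimension $\ge 2$ could be removed without changing the locus of agreement in codimension one, forcing the two families to agree along that component --- a contradiction. If you want to salvage your approach, you could instead run this Hartogs-type argument; your local monomial description of $Z$ is correct and would then be a corollary rather than an input. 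As written, however, the proposal has a genuine gap at its central step.
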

\begin{proof}
Recall that a family of representations of a quiver is given by
a collection of line bundles and maps of the line bundles.
Therefore, if two families of representations parametrized by
a normal variety coincide in codimension one,
then they must be the same family.
Since $\scM_\theta \setminus Z = \scM_{\theta'}\setminus Z$ parametrizes
the same family and $Z$ is the locus where the two families are different,
we are done.
\end{proof}

Let $Z_1$ be a connected component of $Z$.
%For a closed point $y \in \scM_{\theta}$,
%we write the corresponding $\theta$-stable representation
%as $E_y$.
Take $z \in Z_1$
and let
$$
 0 \to S_z \to E_z \to Q_z \to 0
$$
be $\theta'$-destabilizing sequence of $E_z$.

\begin{lemma}%[{cf.~\cite[Corollary 4.6]{Craw-Ishii}}]
Either $S_z$ or $Q_z$ is rigid for any $z \in Z_1$.
\end{lemma}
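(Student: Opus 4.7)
Since $f:\scM_\theta \to X_{\theta_0}$ is an isomorphism in the type 0 case, each of its fibers is a single point, so \pref{lm:fiber} gives $\dim \Ext^1(Q_z,S_z)=1$ for every $z \in Z$. Picking $z_0 \in Z_1$ in a two-dimensional $\bT$-orbit (which exists because $Z_1$ is two-dimensional), \pref{lm:dim_Ext} identifies this dimension with the number of connected components of $\partial R_1$, so $\partial R_1$ is connected. Combined with \pref{lm:R_connected} ($R_1, R_2$ are both connected) and the Euler characteristic relation $\chi(R_1)+\chi(R_2)=\chi(T)=0$ for two compact subsurfaces sharing a single boundary circle, one of $R_1, R_2$ is a topological disk (genus $0$) and the other a once-punctured torus (genus $1$). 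Without loss of generality, take $R_1$ to be the disk.

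The plan is to show $\Ext^1(S_z, S_z) = 0$ when $R_1$ is a disk. Since $\dim\Ext^1(Q_z, S_z) = 1$, non-split extensions of $Q_z$ by $S_z$ are unique up to isomorphism, so the natural morphism $Z_1 \to M_S \times M_Q$ to the product of the (coarse) moduli of $\theta_0$-stable modules of dimension vectors $\chi_{R_1}$ and $\chi_{R_2}$ is generically finite onto its image, giving $\dim M_S + \dim M_Q \geq 2$. The disk topology of $R_1$ forces $\dim M_S = 0$ (hence $\dim M_Q = 2$): since $R_1$ is simply connected, every closed path in the restricted subquiver $\Gamma|_{R_1}$ can be contracted modulo the dimer commutation relations supplied by the internal nodes of $R_1$. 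More precisely, one constructs a projective resolution of $S_z$ from the dimer model data on $R_1$ and checks directly that the differential computing $\Ext^1(S_z, S_z)$ is injective thanks to the vanishing of $H^1(R_1;\bZ)=0$. This gives rigidity of $S_z$ at the generic point $z_0$.

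For $z \in Z_1$ outside the dense two-dimensional $\bT$-orbit, the destabilizing sub-module $S_z$ extends to a flat family over $Z_1$ (for instance, via the relative Quot scheme applied to the universal $\bC\Gamma$-module on $Z_1$), and the combinatorial data ($R_1$ and $\partial R_1$) is locally constant on $Z_1$. Hence the same disk-based rigidity argument applies at every $z \in Z_1$. The main obstacle is making the disk-rigidity vanishing $\Ext^1(S_z, S_z) = 0$ rigorous at the level of projective resolutions: this requires exploiting the explicit dimer-model presentation of $\bC\Gamma$ and the simply-connectedness of $R_1$ to kill the relevant cocycles, without invoking the global structure of $\scM_\theta$ beyond what has already been used.
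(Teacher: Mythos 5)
Your reduction to topology follows the paper's own route: the type~0 assumption gives $\dim \Ext^1(Q_z,S_z)=1$ via \pref{lm:fiber}, a point of $Z_1$ in a two-dimensional $\bT$-orbit together with \pref{lm:dim_Ext} gives connectedness of $\partial R_1$, and combining this with \pref{lm:R_connected} forces one of $R_1$, $R_2$ to be simply connected (your Euler-characteristic bookkeeping is a correct, if slightly more elaborate, version of that step). The genuine gap is the step you yourself flag as ``the main obstacle'': you never prove that a $\theta_0$-stable module supported on the disk is rigid. Announcing that a projective resolution built from the dimer data will have injective differential ``thanks to $H^1(R_1;\bZ)=0$'' is a plan, not a proof, and it is the entire content of the lemma. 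The paper's argument at this point is far more elementary and avoids resolutions altogether: since $R_1$ is simply connected, the nonzero arrow values of $S_z$ carry no holonomy, so the gauge group $(\bCx)^{R_1}$ normalizes every nonzero $\psi_a$ to $1$; the module is then determined by discrete data and is rigid. Your auxiliary dimension count $\dim M_S+\dim M_Q\ge 2$ contributes nothing toward this and can be deleted.

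The globalization over $Z_1$ is also handled differently, and your version is shakier. You appeal to ``locally constant combinatorial data,'' but for $z$ outside the dense two-dimensional orbit the vanishing arrows of $E_z$ do not form a perfect matching and \pref{lm:dim_Ext} says nothing; what actually saves the argument is that $R_1$ is fixed by the wall, so the gauge-normalization argument (once you have it) applies verbatim to every module with support $R_1$. The paper instead observes that a rigid module is an isolated point of the moduli space of $\theta_0$-stable modules of dimension vector $\chi_{R_1}$, so the flat family $z\mapsto S_z$ over the connected scheme $Z_1$ is constant; this simultaneously yields rigidity at every $z$ and the constancy of the family that is used immediately afterwards in Section~\ref{sc:rigid_subobject}. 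Without either a completed disk-rigidity argument or this constancy statement, your proof does not close.
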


\begin{proof}
Since $f$ is of type 0,
one has $\dim \Ext^1(Q_z, S_z) = 1$ by Lemma \ref{lm:fiber}.
Assume $z$ is in a two-dimensional $\bT$-orbit.
Then Lemma \ref{lm:dim_Ext} shows that
$\partial R_1$ is connected.
This implies that
either $R_1$ or $R_2$ is simply connected.
If $R_1$ is simply-connected,
then the values $\psi_a$ for the representation $S_z$
can be normalized to $0$ or $1$
by gauge transformation,
so that $S_z$ is rigid.
If not,
then $R_2$ is simply-connected and
$Q_z$ is rigid.

Since $S_z$ and $Q_z$ are $\theta_0$-stable for any $z \in Z_1$,
we have morphisms from $Z_1$ to the moduli spaces of $\theta_0$-stable
representations with dimension vectors $\chi_{R_1}$ and $\chi_{R_2}$ respectively.
A rigid representation forms a connected component of the moduli space
and therefore if a rigid representation appears in a flat family of $\theta_0$-stable
representations parametrized by the connected scheme $Z_1$, then it must be a constant family.
\end{proof}

\subsubsection{Rigid subobject case}
 \label{sc:rigid_subobject}

We first assume that $S_z$ is rigid.
This assumption implies that
the family $\scS$ of destabilizing submodules
of the universal representation over $Z_1$
is constant.
Let $S$ be the rigid destabilizing submodule
of $E_z$ for some (and hence all) $z \in Z_1$.

\begin{lemma} \label{lm:support_compact}
$\Supp(\Phi^{-1}(S)) \subset \scM_\theta$ is compact.
\end{lemma}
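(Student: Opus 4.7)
The plan is to show that $\Supp(\Phi^{-1}(S))$ is contained in the fiber $\pi^{-1}(0) \subset \scM_\theta$ of the projective morphism $\pi \colon \scM_\theta \to X_0$ over the torus-fixed point of $X_0$. Since $\pi$ is projective and $X_0$ is affine, $\pi^{-1}(0)$ is compact, so this inclusion implies compactness of the support.

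First I would use the derived equivalence $\Phi$ and the identity $\Phi(\scO_y) = E_y$ to translate the support condition:
$$
 \Ext^i_{\scM_\theta}(\scO_y, \Phi^{-1}(S))
  \cong \Ext^i_{\bC\Gamma}(E_y, S),
$$
so $y \in \Supp(\Phi^{-1}(S))$ if and only if some $\Ext^i(E_y, S)$ is nonzero. Both $E_y$ (by $\theta$-stability) and $S$ (by \pref{lm:simple}) have $\End = \bC$, so the center $Z(\bC\Gamma)$ acts on each by scalars, yielding central characters $\chi_{E_y}$ and $\chi_S$. Any $z \in Z(\bC\Gamma)$ acts on $\Ext^i(E_y, S)$ via both pre- and post-composition as $\chi_{E_y}(z)$ and $\chi_S(z)$ times the identity; hence the Ext group vanishes whenever $\chi_{E_y} \ne \chi_S$. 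For a non-degenerate dimer model $Z(\bC\Gamma)$ is naturally identified with $\bC[X_0]$, and $\chi_{E_y} = \pi(y)$ by construction of the moduli; so $y \in \Supp(\Phi^{-1}(S))$ forces $\pi(y) = \chi_S$.

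It then remains to show $\chi_S = 0 \in X_0$, or equivalently that every element of the maximal ideal $\mathfrak{m}_0 \subset \bC[X_0]$ acts as zero on $S$. In the dimer description $\mathfrak{m}_0$ is generated by the central element $\omega$ of small cycles together with two further central elements realising the generators of $H_1(T) = \bZ^2$. In the rigid subobject case the preceding analysis forces $R_1 \subset T$ to be simply connected, so any cycle based at a vertex $v \in R_1$ contained in the subquiver on $R_1$ is null-homotopic on $T$ and cannot represent a nontrivial winding class; the two winding generators of $\mathfrak{m}_0$ must therefore be represented at each $v \in R_1$ by cycles passing through some $w \in R_2$, which act on $S_v$ through $S_w = 0$. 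Likewise $\omega$ acts as zero on $S_v$ for any $v \in R_1$ adjacent to $R_2$ (the small cycle surrounds a node shared with an $R_2$-face), and being a scalar on $S$ by $\End(S) = \bC$ it must act as zero on all of $S$. Thus $\chi_S = 0$, so $\Supp(\Phi^{-1}(S)) \subset \pi^{-1}(0)$.

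The main obstacle is the explicit identification $Z(\bC\Gamma) \cong \bC[X_0]$ together with dimer-theoretic representatives for the three generators of $\mathfrak{m}_0$; this is a structural input from the theory of consistent dimer models that one invokes rather than re-proves here.
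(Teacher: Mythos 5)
Your argument is correct in outline, but it takes a longer and more explicit route than the paper, whose entire proof is: $\Supp(\Phi^{-1}(S))$ is projective over the affine base, so if it were non-compact then $\End(\Phi^{-1}(S))$ would be infinite-dimensional (the coordinate ring of the image of the support acts faithfully by central endomorphisms), contradicting $\End(\Phi^{-1}(S)) \cong \End(S) = \bC \cdot \id_S$ from Lemma \ref{lm:simple}. Your central-character argument is really a refinement of the same mechanism --- both proofs run the action of $\bC[X_0] \subset Z(\bC\Gamma)$ through the simplicity of $S$ --- and note that once you know $\Supp(\Phi^{-1}(S)) \subset \pi^{-1}(\chi_S)$ for a single point $\chi_S \in X_0$, you are already done, since every fiber of the projective morphism $\pi$ is compact; the subsequent dimer-combinatorial identification $\chi_S = 0$ is not needed for the lemma. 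That extra step is also where your proof leans hardest on inputs the paper never establishes: the identification $Z(\bC\Gamma) \cong \bC[X_0]$ together with path representatives of monomials graded by $H_1(T) \oplus \bZ$ is a consistency-type structural result (it does follow abstractly from (\bfE) plus normality of $X_0$ that the center is $\bC[X_0]$, but the explicit cycle description requires more), and your claim that the maximal ideal of the origin is generated by $\omega$ and two winding elements is inaccurate in general (the Hilbert basis of the dual cone can be larger); what saves the argument is that \emph{every} non-unit monomial is either a positive power of $\omega$ or has non-trivial winding, and hence kills $S$ when $R_1$ is simply connected. Finally, you should phrase the simple-connectedness of $R_1$ as the defining hypothesis of the rigid-subobject case (that is how the case division in the preceding lemma arises) rather than as a consequence of rigidity. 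What your approach buys is the sharper statement $\Supp(\Phi^{-1}(S)) \subset \pi^{-1}(0)$; what the paper's approach buys is a two-line proof with no appeal to the toric or dimer structure beyond projectivity over an affine base.
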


\begin{proof}
Note that $\Supp(\Phi^{-1}(S))$ is projective over
the affine variety $X_\theta$.
Therefore, if $\Supp(\Phi^{-1}(S))$ is not compact,
then $\End(\Phi^{-1}(S))$ must be infinite-dimensional,
which contradicts
$
 \End(\Phi^{-1}(S))
  \cong \End(S)
  = \bC \cdot \id_S
$
in Lemma \ref{lm:simple}.
%If $\Supp(\Phi^{-1}(S))$ is the whole of $\scM_\theta$,
%then $\End(\Phi^{-1}(S))$ must be infinite-dimensional,
%which contradicts
%$
% \End(\Phi^{-1}(S))
%  \cong \End(S)
%  = \bC \cdot \id_S
%$
%in Lemma \ref{lm:simple}.
\end{proof}

\begin{lemma} \label{lm:chiSE}
$\chi(S, E_y) = 0$ for any $y \in \scM_\theta$.
\end{lemma}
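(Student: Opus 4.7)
The plan is to transport the question through the equivalence $\Phi$ and then use that $\Phi^{-1}(S)$ has compact support on a smooth, non-compact variety.

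First, since $\Phi$ is an equivalence of derived categories and $\Phi(\scO_y) = E_y$, the Euler pairing is preserved, giving
$$
\chi(S, E_y) = \chi\lb \Phi^{-1}(S), \scO_y \rb.
$$
So it suffices to show that $\chi(\Phi^{-1}(S), \scO_y) = 0$ for every $y \in \scM_\theta$.

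Next, since $\scM_\theta$ is smooth, the object $\Phi^{-1}(S) \in D^b \coh \scM_\theta$ admits a bounded locally free resolution $P^\bullet$ of finite length. Using this resolution one computes
$$
\chi\lb \Phi^{-1}(S), \scO_y \rb
  = \sum_i (-1)^i \dim \Ext^i(P^\bullet, \scO_y)
  = \sum_i (-1)^i \rank P^i,
$$
because $\bR\Hom(P^i, \scO_y)$ is simply the fiber of $(P^i)^\vee$ at $y$. Hence $\chi(\Phi^{-1}(S), \scO_y)$ is independent of $y$ and equals the generic rank of $\Phi^{-1}(S)$.

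Finally, by Lemma \ref{lm:support_compact} the support of $\Phi^{-1}(S)$ is compact, while $\scM_\theta$ is a smooth toric Calabi-Yau 3-fold by Theorem \ref{th:Ishii-Ueda_08} and hence non-compact (a compact toric variety cannot have trivial canonical bundle). Therefore the generic rank of $\Phi^{-1}(S)$ is zero, and $\chi(S, E_y) = 0$ for every $y \in \scM_\theta$. The argument is essentially formal once the compact support statement is in hand, so there is no real obstacle; the only point to check carefully is the vanishing of the generic rank, which follows immediately from non-compactness of the toric Calabi-Yau $\scM_\theta$.
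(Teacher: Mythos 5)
Your proof is correct and follows essentially the same route as the paper: transport through $\Phi$, observe that $\chi(\Phi^{-1}(S),\scO_y)$ is independent of $y$ (the paper phrases this as ``skyscraper sheaves are numerically equivalent,'' which your locally free resolution argument justifies), and conclude from the compactness of $\Supp\Phi^{-1}(S)$ inside the non-compact $\scM_\theta$ that the common value is zero.
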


\begin{proof}
Recall that $\chi(S, E_y) = \sum_{i=0}^3 (-1)^i \dim \Ext^i(S, E_y)$
is the Euler characteristic of $\bR \Hom(S, E_y)$.
Since $\Supp(\Phi^{-1}(S))$ is compact,
one can take $y \in \scM_\theta \setminus \Supp(\Phi^{-1}(S))$,
which clearly satisfies $\chi(\Phi^{-1}(S), \scO_y) = 0$.
Since skyscraper sheaves are numerically equivalent,
%(i.e. they give the same element
%in the numerical Grothendieck group),
this shows $\chi(S, E_y) = \chi(\Phi^{-1}(S), \scO_y) = 0$
for any $y \in \scM_\theta$.
\end{proof}

\begin{lemma} \label{lm:ExtSE}
For any $y \in \scM_\theta \setminus Z$
and any $i \in \bZ$,
one has $\Ext^i(S, E_y) = 0$.
\end{lemma}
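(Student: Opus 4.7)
The plan is to translate via the derived equivalence \eqref{eq:der_equiv3}: $\Ext^i(S,E_y)\cong\Ext^i(\scF,\scO_y)$ with $\scF=\Phi^{-1}(S)$, so the desired vanishing for all $i$ is equivalent to $y\notin\Supp\scF$, and the lemma amounts to the inclusion $\Supp\scF\subseteq Z$.

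I would first dispatch the extremal degrees $i=0,3$ by $\theta_0$-stability. For $y\notin Z$, the representation $E_y$ is strictly $\theta_0$-stable with $\theta_0(E_y)=0$. A nonzero $\phi:S\to E_y$ must be injective: if $\ker\phi\subsetneq S$ were a proper nonzero submodule, then $\theta_0$-stability of $S$ (\pref{lm:simple}) would force $\theta_0(\ker\phi)>0$, hence $\theta_0(\im\phi)<0$, contradicting $\theta_0$-semistability of $E_y$; but an injective $\phi$ exhibits $S$ as a proper submodule of $E_y$ (since dimension vectors differ) with $\theta_0(S)=0=\theta_0(E_y)$, again contradicting strict stability of $E_y$. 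So $\Hom(S,E_y)=0$, and symmetrically $\Hom(E_y,S)=0$. Using the 3-Calabi-Yau property of $\scM_\theta$ (equivalently of $\bC\Gamma$ for a non-degenerate dimer model), Serre duality $\Ext^i(S,E_y)\cong\Ext^{3-i}(E_y,S)^{\ast}$ yields $\Ext^3(S,E_y)=0$. Combining with $\chi(S,E_y)=0$ from \pref{lm:chiSE} forces $\dim\Ext^1(S,E_y)=\dim\Ext^2(S,E_y)$.

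It remains to rule out these intermediate dimensions. Here I would exploit that $S$ is simple and rigid, so that $\bR\Hom(S,S)\cong\bC\oplus\bC[-3]$ and $\scF$ is a spherical object in $D^b\coh\scM_\theta$; together with the compactness of $\Supp\scF$ from \pref{lm:support_compact}, this constrains $\Supp\scF$ to a one-dimensional compact subscheme of $\scM_\theta$. I would then identify this subscheme with a curve lying in $Z_1\subseteq Z$ by using the constant family $\scS\hookrightarrow\scE|_{Z_1}$ of destabilizing subobjects over $Z_1$ (which exists by the rigidity assumption of Section \ref{sc:rigid_subobject}) to construct $\scF$ explicitly as a twist of the pushforward of a structure sheaf from $Z_1$.

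The main obstacle is exactly this last step: stability and the Euler characteristic give only the extremal vanishing and the equality $\dim\Ext^1=\dim\Ext^2$, so the intermediate-degree vanishing for $y\notin Z$ requires genuine geometric input—namely confining $\Supp\scF$ to $Z$ by combining the spherical/rigidity structure of $\scF$ with the explicit description of the destabilizing family over $Z_1$ and the toric geometry of $\scM_\theta$ coming from the dimer model.
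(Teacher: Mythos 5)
Your treatment of the extremal degrees is sound and matches the paper: $\Hom(S,E_y)=0$ and $\Hom(E_y,S)=0$ follow from stability (the paper phrases the first as ``the image of $S$ in $E_y$ would be a $\theta'$-destabilizing submodule'' and the second via the kernel being $\theta$-destabilizing), Serre duality kills $i=3$, and $\chi(S,E_y)=0$ from \pref{lm:chiSE} then reduces everything to showing $\Ext^1=\Ext^2=0$. But that reduction is exactly where your argument stops being a proof. You acknowledge this yourself, and the route you sketch for closing the gap does not work: it rests on the claim that sphericality of $\Phi^{-1}(S)$ plus compactness of its support confine the support to a one-dimensional subscheme. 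That is false in this setting --- the paper shows (Lemmas \ref{lm:support} and \ref{lm:Cartier1}) that $\Supp\Phi^{-1}(S)$ is the connected component $Z_1$ of $Z$, which in the type~0 case is a compact \emph{divisor}, i.e.\ two-dimensional. Worse, the logical order is inverted: the paper \emph{deduces} $\Supp\Phi^{-1}(S)\subseteq Z$ (Lemma \ref{lm:support}) \emph{from} the present lemma, so establishing the support inclusion first by independent geometric means would require an argument you have not supplied.

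The missing idea is a short homological one. Fix any $z\in Z_1$ and apply $\Hom(E_y,-)$ to the destabilizing sequence $0\to S\to E_z\to Q_z\to 0$. Since $y\notin Z$ and $z\in Z$, the points $y$ and $z$ are distinct, so the equivalence $\Phi$ gives $\Ext^1(E_y,E_z)\cong\Ext^1(\scO_y,\scO_z)=0$; and $\Hom(E_y,Q_z)=0$ because the kernel of a nonzero map $E_y\to Q_z$ would be a $\theta'$-destabilizing submodule of $E_y$, contradicting $y\notin Z$. The long exact sequence then forces $\Ext^1(E_y,S)=0$, and Serre duality gives $\Ext^2(S,E_y)\cong\Ext^1(E_y,S)^\vee=0$. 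Finally $\Ext^1(S,E_y)=0$ follows from $\chi(S,E_y)=0$. Note that rigidity and sphericality of $S$ play no role in this lemma; they enter only later (e.g.\ in Lemma \ref{lm:ExtSE2}, where one computes $\Ext^\bullet(S,E_z)$ for $z\in Z$).
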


\begin{proof}
Since both $S$ and $E_y$ are $\bC \Gamma$-modules,
the Calabi-Yau property of
$D^b \coh \scM_\theta \cong D^b \module \bC \Gamma$
implies $\Ext^i(S, E_y) = 0$ for $i \ne 0, 1, 2, 3$.
For $i = 0$, one has
$\Hom(S, E_y) = 0$ if $y \nin Z$,
since the image of $S$ in $E_y$ will be
a $\theta'$-destabilizing submodule
otherwise.
For $i = 3$,
one has
$
 \Ext^3(S, E_y)
%  \cong \Ext^3(\Phi^{-1}(S), \scO_y)
%  \cong \Hom(\scO_y, \Phi^{-1}(S))^\vee
  \cong \Hom(E_y, S)^\vee
  = 0,
$
since the kernel of an element in
$\Hom(E_y, S)$ will be
a $\theta$-destabilizing submodule of $E_y$.
For $i=2$ and a point $z\in Z_1$,
consider the Jordan-H{\"o}lder filtration
$$
 0 \to S \to E_z \to Q_z \to 0
$$
of $E_z$.
The assumption $y \nin Z$ implies $\Hom(E_y, Q_z)=0$.
Since $\Phi$ is an equivalence,
we have $\Ext^1(E_y, E_z) \cong \Ext^1(\scO_y, \scO_z) = 0$,
which implies $\Ext^1(E_y, S)=0$.
The remaining case $i = 1$
follows from
$\chi(S, E_y) = 0$
shown in Lemma \ref{lm:chiSE}.
\end{proof}

\begin{lemma} \label{lm:support}
One has $\Supp \Phi^{-1}(S) = Z_1$.
\end{lemma}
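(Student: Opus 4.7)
The plan is to exploit the indecomposability of $\Phi^{-1}(S)$ to force its support to be connected, then combine this with a support containment and a non-emptiness argument to pin down the support as exactly $Z_1$.

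First I would argue that $\Phi^{-1}(S)$ is indecomposable. By \pref{lm:simple} the $\theta_0$-stable module $S$ satisfies $\End(S) = \bC$; the equivalence $\Phi$ transports this to $\End(\Phi^{-1}(S)) = \bC$, which forces $\Phi^{-1}(S)$ to be indecomposable in $D^b \coh \scM_\theta$. An indecomposable bounded complex on a Noetherian scheme has connected support, since a disjoint decomposition $\Supp \Phi^{-1}(S) = C_1 \sqcup C_2$ would induce a splitting $\Phi^{-1}(S) \simeq \Phi^{-1}(S)|_{\scM_\theta \setminus C_2} \oplus \Phi^{-1}(S)|_{\scM_\theta \setminus C_1}$ contradicting $\End = \bC$. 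So $\Supp \Phi^{-1}(S)$ is a connected closed subset of $\scM_\theta$.

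Next, \pref{lm:ExtSE} yields $\Supp \Phi^{-1}(S) \subseteq Z$ via the identifications $\Ext^i(\Phi^{-1}(S), \scO_y) = \Ext^i(S, E_y) = 0$ for every $y \notin Z$ and every $i$. For every $z \in Z_1$, on the other hand, the destabilizing inclusion $S \hookrightarrow E_z$ produces $\Hom(S, E_z) \neq 0$, placing $z$ in the support and in particular yielding $Z_1 \subseteq \Supp \Phi^{-1}(S)$.

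Since a connected closed subset of $Z$ that meets the connected component $Z_1$ must be contained in $Z_1$, we conclude $\Supp \Phi^{-1}(S) \subseteq Z_1$, and combined with the reverse inclusion above this gives the desired equality. The only ingredient beyond the previously established lemmas is the indecomposability step; I expect the main (mild) obstacle to be verifying carefully that a complex with $\End = \bC$ has connected support, which I would dispatch via the Mayer--Vietoris splitting indicated above applied to the open cover $\scM_\theta = (\scM_\theta \setminus C_2) \cup (\scM_\theta \setminus C_1)$ on which $\Phi^{-1}(S)$ has vanishing restriction to the intersection.
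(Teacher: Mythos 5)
Your proposal is correct and follows essentially the same route as the paper: containment in $Z$ via Lemma \ref{lm:ExtSE}, the reverse containment of $Z_1$ via $\Hom(S,E_z)\cong\bC$, and connectedness of the support from indecomposability of $\Phi^{-1}(S)$ (inherited from $S$) to conclude equality with the connected component $Z_1$. The extra Mayer--Vietoris justification of why $\End=\bC$ forces connected support is a detail the paper leaves implicit, but it is the same argument.
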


\begin{proof}
One has
$\Hom^i(\Phi^{-1}(S), \scO_y) = 0$
for any $y \nin Z$ and any $i \in \bZ$,
which implies
$\Supp \Phi^{-1}(S) \subset Z$.
On the other hand,
one also has $\Hom(S, E_y) \cong \bC$ if $y \in Z_1$,
so that $\Supp \Phi^{-1}(S) \supset Z_1$.
Indecomposability of $S$ implies that of $\Phi^{-1}(S)$,
so that the support of $\Phi^{-1}(S)$ is connected.
Since $Z_1$ is a connected component,
one obtains $\Supp \Phi^{-1}(S) = Z_1$.
\end{proof}

\begin{lemma} \label{lm:Z1_compact}
$Z_1$ is compact.
\end{lemma}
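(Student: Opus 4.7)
The claim is immediate from the two preceding lemmas, and my plan is simply to assemble them. By Lemma \ref{lm:support} we have the equality of closed subsets $\Supp \Phi^{-1}(S) = Z_1$, and by Lemma \ref{lm:support_compact} the left-hand side is compact; therefore $Z_1$ is compact.

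If one wishes to spell out why nothing further is needed, note that both inputs were established in the current \emph{rigid subobject case} (so $S$ is the constant destabilizing submodule associated with $Z_1$), and Lemma \ref{lm:support_compact} used only the facts that $\Supp \Phi^{-1}(S)$ is projective over the affine $X_{\theta}$ and that $\End(\Phi^{-1}(S)) \cong \End(S) = \bC \cdot \id_S$ (the latter being the $\theta_0$-stability of $S$ from Lemma \ref{lm:simple}). No further input about $Z_1$ itself is needed, and there is no real obstacle — the only mild subtlety is keeping track that the equivalence $\Phi$ preserves supports in the sense used, which is precisely what Lemma \ref{lm:support} records. Thus the proof is a one-line citation of Lemmas \ref{lm:support_compact} and \ref{lm:support}.
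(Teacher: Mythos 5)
Your proof is correct and is exactly the paper's argument: the paper likewise deduces compactness of $Z_1$ immediately from Lemma \ref{lm:support_compact} together with the equality $\Supp \Phi^{-1}(S) = Z_1$ of Lemma \ref{lm:support}. Nothing further is needed.
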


\begin{proof}
This is clear from Lemma \ref{lm:support_compact}
and Lemma \ref{lm:support}.
%Since $Z_1$ is an intersection of toric divisors,
%the same reasoning as Lemma \ref{lm:support_compact}
%shows that $Z_1$ is compact.
\end{proof}

\begin{lemma}%[{cf. \cite[Proposition 4.4]{Craw-Ishii}}]
 \label{lm:connected}
$Z$ is connected.
\end{lemma}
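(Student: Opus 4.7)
The plan is to argue by contradiction, starting from the assumption that $Z$ contains a second connected component $Z_2$ distinct from $Z_1$, and to deduce a contradiction via the uniqueness (up to isomorphism) of the rigid destabilizing subobject.

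First I would observe that whether we are in the \emph{rigid subobject} or \emph{rigid quotient} case depends only on the wall, namely on whether $R_1$ or $R_2$ is simply connected in $T$, and not on the choice of connected component of $Z$. So in the rigid subobject case (that is, $R_1$ simply connected), on every connected component $Z_i \subset Z$ the destabilizing subobject $S_z$ is rigid, and by the moduli-theoretic constancy argument already used for $Z_1$ it is constant on $Z_i$, say equal to some $S^{(i)}$. Lemma \ref{lm:support}, applied to each $Z_i$ in place of $Z_1$, then gives $\Supp \Phi^{-1}(S^{(i)}) = Z_i$.

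The crux is to show that all the $S^{(i)}$ are isomorphic as $\bC\Gamma$-modules. I would argue this by sharpening the gauge-fixing step used to establish rigidity: on the simply connected region $R_1$, any $\theta_0$-stable representation of $\bC\Gamma$ with dimension vector $\chi_{R_1}$ whose nonzero arrow values lie in $\bCx$ can be normalized, uniquely up to isomorphism, by setting all nonzero arrow values to $1$. The set of arrows forced to be zero is then determined by the relations $\scI$ of $\Gamma$ restricted to $R_1$ together with the $\theta_0$-stability condition, hence is also uniquely determined. This would yield the desired uniqueness $S^{(i)} \cong S$, after which all the supports $Z_i$ coincide with the single connected set $\Supp \Phi^{-1}(S)$, contradicting that the $Z_i$ were assumed pairwise disjoint.

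The main obstacle is this uniqueness step: although the gauge-fixing on a simply connected piece of the torus is intuitively clear, making it precise requires careful combinatorial unpacking of the restricted quiver $\Gamma|_{R_1}$, the induced ideal of relations, and the way $\theta_0$-stability selects the vanishing arrows. An alternative route, which sidesteps this combinatorial uniqueness statement, would be to use the isomorphism $f : \scM_\theta \simto X_{\theta_0}$ of the type 0 case to replace $Z$ by its image $f(Z) \subset X_{\theta_0}$, and then to identify $f(Z)$ with the image of a product $\bM_1 \times \bM_2$ of moduli spaces of $\theta_0$-stable representations of smaller dimension vector, whose connectedness can be verified separately using the ambient dimer-model toric structure.
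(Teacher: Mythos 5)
You take a genuinely different route from the paper, and the step you yourself flag as the main obstacle is a real gap, not a technicality. Your plan is to show that the rigid destabilizing subobjects $S^{(i)}$ over the various components of $Z$ are all \emph{isomorphic}, and then conclude from $\Supp \Phi^{-1}(S^{(i)}) = Z_i$. But normalizing the nonzero arrow values to $1$ on the simply connected region $R_1$ does not determine the representation: two $\theta_0$-stable rigid representations with dimension vector $\chi_{R_1}$ can differ in \emph{which} arrows vanish, and nothing in the relations $\scI$ together with $\theta_0$-stability forces a unique vanishing set. Equivalently, the moduli space of $\theta_0$-stable representations with dimension vector $\chi_{R_1}$ may a priori have several isolated points, and your argument gives no control over this. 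The alternative route you sketch (replacing $Z$ by $f(Z)$ and a product of smaller moduli spaces) merely relocates the same difficulty, since the connectedness of those smaller moduli spaces is exactly what is in question. A smaller point: you dismiss the possibility that a second component parametrizes rigid \emph{quotients} on the grounds that the dichotomy is determined by the wall; this is plausible topologically, but the paper does not rely on it and treats that case separately.

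The paper sidesteps the uniqueness question entirely by a K-theoretic argument. If $Z_2$ were a second component with rigid destabilizing subobject $S'$, then $[S]$ and $[S']$ have the same class in $K(\Gamma)$, because both dimension vectors are annihilated by $\theta_0$, which is generic on the wall, hence both equal $\chi_{R_1}$ (and in the rigid quotient case $[S]$ and $[Q]$ satisfy the analogous linear relation). Applying the inverse equivalence, $\Phi^{-1}(S)$ and $\Phi^{-1}(S')$ would then have equal classes in $K(\scM_\theta)$; but these are line bundles supported on disjoint unions of compact toric divisors, so their classes are linearly independent. This numerical contradiction rules out a second component without ever deciding whether $S \cong S'$. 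To salvage your approach you would have to prove uniqueness of the rigid $\theta_0$-stable representation with dimension vector $\chi_{R_1}$, which is a substantially harder combinatorial statement than the lemma itself; the linear independence of classes supported on disjoint compact divisors is the cheaper and, as far as I can see, necessary input.
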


\begin{proof}%[Proof of Lemma \ref{lm:connected}]
Assume for contradiction that
there is another connected component $Z_2$ of $Z$.
First we consider the case
when $Z_2$ parametrizes representations
whose destabilizing subobject $S'$ is rigid.
The classes $[S]$ and $[S']$ in the Grothendieck group
$K(\Gamma)$ are the same,
since both of them are in the kernel
of a stability parameter $\theta_0$
which is generic on the wall.
This contradicts the fact that
the classes of $\Phi^{-1}(S)$ and $\Phi^{-1}(S')$
in $K(\scM_\theta)$ must be linearly independent,
since their supports are unions of compact toric divisors
which are mutually disjoint.

Next we consider the case
when $Z_2$ parametrizes representations
containing a rigid quotient $Q$.
The support of $\Phi^{-1}(Q)$ coincides with $Z_2$
as in the subrepresentation case.
On the other hand,
the sum $[S]+[Q]$ of the classes of $S$ and $Q$ in $K(\Gamma)$ is zero.
This contradicts the linear independence
of $[\Phi^{-1}(S)]$ and $[\Phi^{-1}(Q)]$ in $K(\scM_\theta)$,
obtained in just the same way as above.
\end{proof}

\begin{lemma}%[{cf. \cite[Proposition 4.4]{Craw-Ishii}}]
 \label{lm:compact}
$Z$ is compact.
\end{lemma}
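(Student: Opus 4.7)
The plan is to combine the connectedness of $Z$ established in Lemma \ref{lm:connected} with the compactness of a single connected component $Z_1$ obtained in Lemma \ref{lm:Z1_compact}, so that the proof reduces to a one-line deduction plus a check that the hypothesis covers every case.

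First I would note that the paper has already shown in the lemma preceding Lemma \ref{lm:simple} that for each connected component of $Z$, either the destabilizing subobject $S_z$ or the destabilizing quotient $Q_z$ is rigid, and is in fact constant along that component. The work in Section \ref{sc:rigid_subobject} treats the rigid-subobject case and culminates in Lemma \ref{lm:Z1_compact}. The rigid-quotient case is handled by exactly the same sequence of lemmas after swapping the roles of $\theta$ and $\theta'$ (and correspondingly $S$ and $Q$): indeed Lemmas \ref{lm:support_compact}--\ref{lm:Z1_compact} only use $\theta_0$-stability, the equivalence $\Phi$, the Calabi--Yau property, and the compactness of supports of objects with finite-dimensional endomorphism algebra, all of which are symmetric in $(\theta,\theta')$. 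Hence in either case the connected component $Z_1$ is compact.

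Now I would invoke Lemma \ref{lm:connected}: $Z$ is connected, so $Z$ coincides with the unique connected component $Z_1$. Combining this with the preceding paragraph yields $Z = Z_1$ compact, which is the claim. In LaTeX:
\begin{proof}
By Lemma \ref{lm:connected}, $Z$ is connected, so it equals its unique connected component $Z_1$. In the rigid-subobject case, Lemma \ref{lm:Z1_compact} shows $Z_1$ is compact. The rigid-quotient case is proved by the symmetric argument, exchanging the roles of $\theta$ and $\theta'$, and hence of $S$ and $Q$. In either case $Z = Z_1$ is compact.
\end{proof}

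The main obstacle, if any, is verifying that the symmetric argument for the rigid-quotient case really goes through verbatim; this amounts to checking that every invocation of $\theta$-stability or $\theta'$-stability in Lemmas \ref{lm:support_compact}--\ref{lm:Z1_compact} can be reversed, and that the equivalence $\Phi$ can be replaced (or reused) in the quotient case. Since the hypotheses (\textbf{T})+(\textbf{E}) and the generic-on-the-wall assumption on $\theta_0$ treat $S$ and $Q$ symmetrically, this check is essentially bookkeeping rather than a genuine difficulty, so the lemma follows with no real new content beyond the preceding results.
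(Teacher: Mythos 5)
Your proof is correct and follows exactly the paper's own argument: the paper also derives compactness of $Z$ as an immediate consequence of Lemma \ref{lm:Z1_compact} and Lemma \ref{lm:connected}. (The remarks about the rigid-quotient case are harmless but unnecessary here, since Lemma \ref{lm:compact} sits inside the rigid-subobject subsection and the quotient case is deferred to Section \ref{sc:rigid_quotient}.)
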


\begin{proof}
This is an immediate consequence
of Lemma  \ref{lm:Z1_compact} and Lemma \ref{lm:connected}.
\end{proof}

\begin{lemma} \label{lm:ExtSE2}
For any $z \in Z$,
one has
$$
 \Hom^i(\Phi^{-1}(S), \scO_z)
  = \begin{cases}
       \bC & i = 0, 1, \\
       0 & \text{otherwise}.
     \end{cases}
$$
\end{lemma}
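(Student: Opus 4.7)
The plan is to transport the statement across the derived equivalence $\Phi_C$ of \eqref{eq:der_equiv3}, under which $\Hom^i(\Phi^{-1}(S), \scO_z)$ becomes $\Ext^i(S, E_z)$. By Lemma \ref{lm:connected} the unstable locus $Z$ coincides with its component $Z_1$, so for every $z \in Z$ we have a destabilizing short exact sequence
\begin{equation*}
 0 \to S \to E_z \to Q_z \to 0,
\end{equation*}
with $S$ the fixed rigid subobject and $Q_z$ a $\theta_0$-stable quotient of distinct dimension vector. Because $S$ and $Q_z$ are non-isomorphic $\theta_0$-stable modules of the same $\theta_0$-slope $0$, the usual slope-stability argument yields $\Hom(S, Q_z) = \Hom(Q_z, S) = 0$.

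The first step is to compute $\Hom(S, E_z)$ by applying $\Hom(S, -)$ to the sequence: using $\End(S) = \bC$ from Lemma \ref{lm:simple}, the rigidity $\Ext^1(S, S) = 0$, and the vanishing $\Hom(S, Q_z) = 0$, the long exact sequence gives $\Hom(S, E_z) \cong \bC$, generated by the inclusion $S \hookrightarrow E_z$.

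The second step, which I expect to be the heart of the proof, is to apply $\Hom(-, S)$ to the sequence and identify the connecting map $\delta : \End(S) \to \Ext^1(Q_z, S)$ with the assignment sending $\id_S$ to the Yoneda class of the extension $E_z$. This class is nonzero, since otherwise $E_z \cong S \oplus Q_z$ would contain $Q_z$ as a submodule with $\theta(Q_z) = -\theta(S) < 0$, contradicting $\theta$-stability of $E_z$. Since we are in the type $0$ case, Lemma \ref{lm:fiber} gives $\dim \Ext^1(Q_z, S) = 1$, so $\delta$ is an isomorphism of one-dimensional spaces, and combined with $\Hom(Q_z, S) = 0$ and $\Ext^1(S, S) = 0$ it forces $\Hom(E_z, S) = 0$ and $\Ext^1(E_z, S) = 0$. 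Serre duality on the Calabi--Yau $3$-fold $\scM_\theta$ then yields
\begin{equation*}
 \Ext^3(S, E_z) \cong \Hom(E_z, S)^\vee = 0, \qquad \Ext^2(S, E_z) \cong \Ext^1(E_z, S)^\vee = 0,
\end{equation*}
and the remaining value $\Ext^1(S, E_z) \cong \bC$ is then forced by the Euler-form identity $\chi(S, E_z) = 0$ of Lemma \ref{lm:chiSE}.
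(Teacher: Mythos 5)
Your proof is correct and follows essentially the same route as the paper: the key case $i=2$ is handled exactly as in the text, by applying $\Hom(-,S)$ to the destabilizing sequence and using that the connecting map $\End(S)\to\Ext^1(Q_z,S)$ is an isomorphism of one-dimensional spaces (non-splitness from $\theta$-stability, $\dim\Ext^1(Q_z,S)=1$ from Lemma \ref{lm:fiber}, $\Ext^1(S,S)=0$ from rigidity), with Serre duality and $\chi(S,E_z)=0$ finishing the remaining indices. The only cosmetic difference is that you extract the $i=0$ and $i=3$ cases from the same long exact sequence via $\Hom(S,Q_z)=\Hom(Q_z,S)=0$, where the paper quotes them directly from the earlier stability arguments.
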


\begin{proof}
For $i = 0$, one has
$\Hom(S, E_z) \cong \bC$
for $z \in Z$.
For $i = 3$,
one has
$\Hom(E_z, S) = 0$ for any $z \in \scM_\theta$
by $\theta$-stability of $E_z$.
For $i = 2$,
consider the $\theta'$-destabilizing sequence
\begin{equation} \label{eq:destab}
 0 \to S \to E_z \to Q_z \to 0.
\end{equation}
One has $\dim \Ext^1(Q_z, S)=1$
by Lemma \ref{lm:fiber}, and
rigidity of $S$ implies $\dim \Ext^1(S, S) = 0$.
Now the long exact sequence of $\Ext^*(-, S)$
associated with \eqref{eq:destab}
shows $\Ext^1(E_z, S) = 0$.
The remaining case $i = 1$
follows from $\chi(\Phi^{-1}(S), \scO_z) = 0$
just as in Lemma \ref{lm:ExtSE}.
\end{proof}

\begin{lemma} \label{lm:Cartier1}
$\Phi^{-1}(S)$ is a line bundle
on a Cartier divisor
whose reduced part is $Z$.
\end{lemma}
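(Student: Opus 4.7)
Write $\scF := \Phi^{-1}(S)$. The strategy is to extract the local structure of $\scF$ from the Ext computation of Lemma \ref{lm:ExtSE2} by means of minimal perfect resolutions: first conclude that $\scF$ is concentrated in degree $0$ as a sheaf, then read off the Cartier divisor structure globally.

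Since $\scM_\theta$ is smooth, $\scF$ is perfect, so at each point $z \in \scM_\theta$ it admits a minimal complex of free $\scO_{\scM_\theta, z}$-modules whose term in cohomological degree $-i$ has rank $\dim_\bC \Ext^i(\scF, \scO_z)$ (all differentials lying in $\mathfrak{m}_z$). By Lemma \ref{lm:ExtSE2}, this minimal representative vanishes for $z \notin Z$, while for $z \in Z$ it takes the form
\[
 \lb \scL_{-1} \xrightarrow{f} \scL_0 \rb,
\]
with $\scL_{-1}$ and $\scL_0$ of rank one and $f \in \mathfrak{m}_z \cdot \Hom(\scL_{-1}, \scL_0)$.

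The decisive step is to rule out $f = 0$: if $f$ vanished at some $z \in Z$, then $\scH^{-1}(\scF)_z \cong \scO_{\scM_\theta, z}$ would be free of rank one, so $\scH^{-1}(\scF)$ would be locally free of rank one on an open neighborhood of $z$. This would force $\Supp \scF$ to contain an open subset of the three-dimensional variety $\scM_\theta$, contradicting $\dim Z = 2$. Hence $f$ is a nonzero, and therefore non-zero-divisor, element of the domain $\scO_{\scM_\theta, z}$, and $\scL_{-1} \hookrightarrow \scL_0$ is injective. Consequently $\scH^{-1}(\scF) = 0$, so $\scF$ is a coherent sheaf whose stalk at each $z \in Z$ is $\scO_{\scM_\theta, z}/(f_z)$.

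The globalization is now routine: the annihilator $\operatorname{Ann}(\scF) \subset \scO_{\scM_\theta}$ is locally principal with non-zero-divisor generator, hence an invertible ideal sheaf equal to $\scO_{\scM_\theta}(-D)$ for some Cartier divisor $D$. Then $\scF$ is an $\scO_D$-module locally isomorphic to $\scO_D$ itself, i.e., a line bundle on $D$, and $D_{\mathrm{red}} = \Supp \scO_D = \Supp \scF = Z$ by Lemma \ref{lm:support}. The main obstacle throughout is the rank-drop argument excluding $\scH^{-1}(\scF) \neq 0$; once $\scF$ is known to be a sheaf the remainder is standard commutative algebra.
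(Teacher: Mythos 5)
Your proof is correct and follows essentially the same route as the paper: both extract a two-term complex of line bundles from the Ext computation of Lemma \ref{lm:ExtSE2}, kill $\scH^{-1}$ using the fact that the support is a proper two-dimensional subset of the smooth $3$-fold, and then read off the line-bundle-on-a-Cartier-divisor structure. The only difference is presentational: where the paper cites \cite[Proposition 5.4]{MR1910263} for the bound on homological dimension and then applies a Nakayama-type argument to get a rank-one presentation, you obtain both at once from the minimal local free resolution, whose ranks are exactly $\dim \Ext^i(\Phi^{-1}(S),\scO_z)$.
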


\begin{proof}
Lemma \ref{lm:ExtSE2} shows that
the cohomology sheaves
$\scH^i(\Phi^{-1}(S))$ is non-zero
only if $i = 0$ or $-1$,
and both of them are supported on $Z$.
Then by \cite[Proposition 5.4]{MR1910263},
the object $\Phi^{-1}(S)$ has homological dimension at most one,
i.e., the object $\Phi^{-1}(S)$ is quasi-isomorphic to a complex of the form
$$
  0
  \to \scP^{-1}
  \to \scP^0
  \to 0
$$
where $\scP^i$ are locally-free sheaves.
Since the support of $\Phi^{-1}(S)$ is a proper subset,
the locally-free sheaves $\scP^{-1}$ and $\scP^0$ have the same rank
and $\Phi^{-1}(S)$ is isomorphic to an $\scO_{\scM_\theta}$-module.
%(i.e., a complex concentrated in degree zero).
The condition $\Hom(\Phi^{-1}(S), \scO_z) \cong \bC$ for $z \in Z$ implies
that locally near $z$, one can take a surjection
$\scQ^0 \to \Phi^{-1}(S)$ from a line bundle $\scQ^0$.
The kernel $\scQ^{-1}$ of this surjection is a line bundle
since $\Phi^{-1}(S)$ has homological dimension one.
It follows that $\Phi^{-1}(S)$ is locally isomorphic
to the cokernel of a morphism of line bundles, and
\pref{lm:Cartier1} is proved.
\end{proof}

%\begin{corollary}
%$Z$ is a Cartier divisor.
%\end{corollary}

\begin{lemma}
$\Phi^{-1}(S)$ is a line bundle on $Z$.
\end{lemma}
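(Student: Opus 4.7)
The strategy is to compute $\End(\Phi^{-1}(S))$ in two different ways and compare. On the module side, $\End_{\bC\Gamma}(S) = \bC$ by \pref{lm:simple}, so the derived equivalence gives $\End_{\scO_{\scM_\theta}}(\Phi^{-1}(S)) = \bC$. On the sheaf side, \pref{lm:Cartier1} realizes $\Phi^{-1}(S)$ as a line bundle on the Cartier divisor $D$ with $D_{\mathrm{red}} = Z$. Because endomorphisms of an invertible $\scO_D$-module are just multiplications by sections of $\scO_D$, one obtains $\End(\Phi^{-1}(S)) = H^0(\scO_D)$, and combining the two calculations yields $H^0(\scO_D) = \bC$.

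To upgrade this to $D = Z$, I would consider the short exact sequence
\begin{equation*}
0 \to \scN \to \scO_D \to \scO_Z \to 0,
\end{equation*}
where $\scN = \scI_Z \cdot \scO_D$ is the nilpotent ideal. Connectedness of $Z$ from \pref{lm:connected} gives $H^0(\scO_Z) = \bC$, and since the constant section lifts from $\scO_Z$ to $\scO_D$, the induced map on global sections is surjective onto $\bC$; hence $H^0(\scN) = 0$. Now suppose for contradiction that $D \ne Z$, so $\scN \ne 0$. Writing $D = \sum_i m_i Z_i$ over the smooth toric irreducible components of $Z$, I would filter $\scN$ by powers of $\scI_Z$; the successive graded pieces are locally $\scO_{Z_i}(-k Z_i)$, which by the Calabi--Yau adjunction $K_{Z_i} = Z_i|_{Z_i}$ represent $-k K_{Z_i}$. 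Since $-K_{Z_i}$ is effective on any smooth proper toric surface (as it is the sum of the toric prime divisors), each such graded piece admits a non-zero global section, and a diagram chase would lift one such section to a non-zero element of $H^0(\scN)$, contradicting the vanishing established above.

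The main obstacle will be the final lifting step: non-vanishing of $H^0$ on the associated graded of a filtration does not automatically imply non-vanishing of $H^0$ on the filtered sheaf. Carrying out this lift will require establishing the vanishing of the intermediate $H^1$'s along the filtration, which in turn will rely on the toric Calabi--Yau structure of $\scM_\theta$ and the specific geometry of the exceptional subvariety $Z$ as provided by the results of Section \ref{sc:non-degenerate}.
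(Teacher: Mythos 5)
Your reduction is sound up to the point you yourself flag: $\End_{\bC\Gamma}(S)=\bC$ by \pref{lm:simple}, the equivalence transports this to $\End_{\scO_{\scM_\theta}}(\Phi^{-1}(S))$, and for a line bundle on the Cartier divisor $D$ this is $H^0(\scO_D)$; together with $H^0(\scO_Z)=\bC$ (from \pref{lm:connected} plus reducedness and compactness of $Z$) this gives $H^0$ of your nilpotent ideal sheaf equal to zero. The last step, however, is a genuine gap and not a routine one. First, the graded pieces are misidentified when $Z$ is reducible: the $k$-th piece restricted to a component $Z_i$ is $\scO_{Z_i}(-kZ)|_{Z_i}$, which by adjunction is $-kK_{Z_i}-k\sum_{j\ne i}(Z_j\cap Z_i)|_{Z_i}$ rather than $-kK_{Z_i}$, and a section on each component does not assemble to a section of the (non-split) sheaf on $Z$. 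Second, and more seriously, the lifting you defer to ``vanishing of the intermediate $H^1$'s'' is exactly the missing content: those groups are $H^1$ of non-nef line bundles on compact toric surfaces and there is no general vanishing to invoke. If you want to salvage this line of argument, do not lift at all: the bottom piece $\scI_Z^{m-1}\cdot\scO_D$ (with $m$ the maximal multiplicity) is an honest subsheaf of the nilpotent ideal, so left-exactness of $H^0$ suffices, and a nonzero section of it exists because $\scO_{\scM_\theta}(-Z)$ is isomorphic to $\scO(\sum D')$ summed over the toric divisors $D'$ not contained in $Z$ (the full toric boundary being anticanonical, hence trivial), which carries a canonical section not vanishing identically on any component of $Z$. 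Even so, this is considerably more machinery than the statement requires.

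The paper's proof is entirely different and avoids cohomology: the sum $\omega$ of small cycles is a central element of $\bC\Gamma$ that annihilates $S$, so under the equivalence it acts on $\Phi^{-1}(S)$ as multiplication by the corresponding global function on $\scM_\theta$, whose zero scheme is the union of all toric divisors with multiplicity one. Hence $\Phi^{-1}(S)$ is annihilated by the ideal sheaf of the reduced divisor, i.e.\ it is an $\scO_Z$-module, and \pref{lm:Cartier1} finishes the argument. The lesson worth internalizing is that scheme-theoretic support on the geometric side can be read off directly from central elements acting on the module side, with no need to compute sections or filtrations.
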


\begin{proof}
Lemma \ref{lm:Cartier1} shows that
$\Psi^{-1}(S)$ is a sheaf of $\scO_{n Z}$-module
for sufficiently large $n$.
On the other hand,
the sum $\omega$ of all the small cycles
lies in the center of $\bC \Gamma$ and acts on $S$ as zero.
The zero locus of $\omega$ as a function on $\scM_{\theta}$
is the union of all the toric divisors with multiplicity one.
This shows that $\Phi^{-1}(S)$ in fact is a sheaf of $\scO_Z$-modules.
\end{proof}

\begin{lemma}
For any vertex $v \in R_1$,
one has an isomorphism
$$
 \Phi^{-1}(S) \simto \scL_v^\vee|_Z
$$
of line bundles on $Z$.
\end{lemma}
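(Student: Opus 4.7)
The plan is to produce a canonical morphism from $\scL_v^\vee|_Z$ to $\Phi^{-1}(S)$ and to check that it is a fiberwise isomorphism of line bundles on $Z$; taking the inverse then gives the asserted isomorphism.

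First I would identify $\Phi^{-1}(P_v) \cong \scL_v^\vee$ for every vertex $v \in V$. This is a direct computation: $\Phi(\scL_v^\vee) = \bR \Gamma(\scE \otimes \scL_v^\vee) = \bigoplus_w \bR\Hom(\scL_v, \scL_w)$, which is concentrated in degree zero by (\bfT), and equals $\bigoplus_w e_w \bC\Gamma e_v = \bC\Gamma \cdot e_v = P_v$ as a left $\bC\Gamma$-module by (\bfE).

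Next, for $v \in R_1$ fix a generator $s_v$ of the one-dimensional space $S_v$ and let $\pi_v \colon P_v \twoheadrightarrow S$ be the $\bC\Gamma$-module surjection determined by $e_v \mapsto s_v$. Applying $\Phi^{-1}$ produces a morphism $\scL_v^\vee \to \Phi^{-1}(S)$ in $D^b \coh \scM_\theta$, and since $\Phi^{-1}(S)$ is a sheaf supported on the reduced closed subscheme $Z$ by the previous lemma, adjunction for the closed immersion $i \colon Z \hookrightarrow \scM_\theta$ factors it uniquely through $\scL_v^\vee|_Z$, yielding a morphism $\alpha_v \colon \scL_v^\vee|_Z \to \Phi^{-1}(S)$ of line bundles on $Z$. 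It now suffices to show that $\alpha_v$ is nonzero at every point $z \in Z$.

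For such $z$, applying $\Hom_{\scM_\theta}(-, \scO_z)$ and transporting under the equivalence $\Phi$, the fiberwise dual of $\alpha_v$ at $z$ is identified with precomposition by $\pi_v$, namely $\pi_v^* \colon \Hom_{\bC\Gamma}(S, E_z) \to \Hom_{\bC\Gamma}(P_v, E_z)$. Both spaces are one-dimensional: the domain by Lemma \ref{lm:ExtSE2}, generated by the destabilizing inclusion $\iota \colon S \hookrightarrow E_z$, and the codomain because $\Hom_{\bC\Gamma}(P_v, E_z) = (E_z)_v = \bC$. The image $\iota \circ \pi_v$ sends $e_v$ to $\iota(s_v) \ne 0$ by injectivity of $\iota$, so $\pi_v^*$ is nonzero, whence $\alpha_v|_z$ is nonzero, and $\alpha_v$ is an isomorphism. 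The main point requiring care is the orientation of the duality in the first step — that $P_v$ corresponds to $\scL_v^\vee$ and not to $\scL_v$ — which depends on using the form $\Phi = \bR\Gamma(\scE \otimes -)$ together with the identification in (\bfE) of $\End(\scE)$ with $\bC\Gamma$ and not its opposite; once this is fixed, the remaining verification is essentially formal.
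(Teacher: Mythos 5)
Your argument is correct, and it takes a genuinely different route from the paper's. The paper first uses rigidity of $S$ to show that the restrictions $\scL_w|_Z$ for $w \in R_1$ are all isomorphic, identifies $S$ with $H^0\lb \lb \bigoplus_{w \in R_1} \scL_w \rb \otimes \scL_v^\vee|_Z \rb$ as the constant family of destabilizing subobjects inside the twisted universal family on $Z$, and thereby produces a morphism $\alpha : S \to \Phi(\scL_v^\vee|_Z)$ via the $0$-th cohomology of the standard $t$-structure; applying $\Phi^{-1}$ then gives a pointwise nonzero map of line bundles, hence an isomorphism. You instead go in the opposite direction: you identify $\Phi^{-1}(P_v) \cong \scL_v^\vee$ (which the paper never states explicitly at this point), transport the evaluation map $P_v \to S$, $e_v \mapsto s_v$, through $\Phi^{-1}$, and factor it through $\scL_v^\vee|_Z$ by adjunction; the fiberwise nonvanishing is then the purely module-theoretic statement that $\iota \circ \pi_v \ne 0$, with $\dim \Hom(S, E_z) = 1$ supplied by Lemma \ref{lm:ExtSE2}. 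This is cleaner in that it avoids the explicit analysis of the arrows on $Z$ and the identification of $S$ with global sections, at the cost of having to pin down the variance $\Phi^{-1}(P_v) \cong \scL_v^\vee$ (which you do correctly, and which is the one place where a sign error would be fatal). Two cosmetic points: you call $\pi_v$ a surjection, which would require knowing that $S$ is generated by $S_v$ and is not obvious, but your argument never uses surjectivity, only $\pi_v(e_v) = s_v \ne 0$, so nothing is lost; and the final step implicitly uses that a morphism of line bundles on $Z$ that is nonzero on every fiber is an isomorphism, which is fine since the preceding lemmas show $\Phi^{-1}(S)$ is a genuine line bundle on the reduced scheme $Z$.
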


\begin{proof}
%Change the normalization
%of the tautological line bundle
%to $\scL_v^\vee \otimes \scE$,
%so that the tautological line bundle
%$
% \scL_v^\vee \otimes \scL_v
%  \subset \scL_v^\vee \otimes \scE
%$
%associated with $v$ is the trivial bundle.
%With respect to this new tautological bundle,
%the universal representation at $z \in Z$ is given by
%$
% E_z = ( \scL_v^\vee \otimes \scE )_z,
%$
%and the $\theta'$-destabilizing subobject is given by
%$
% S = ( \bigoplus_{w \in R_1} \scL_v^\vee \otimes \scL_w )_z.
%$
Rigidity of $S$ implies that
any non-zero arrow between two vertices in $R_1$
does not vanish on $Z$,
so that the restrictions $\scL_w|_Z$
% $(\scL_v^\vee \otimes \scL_w) |_Z$
for $w \in R_1$ are mutually isomorphic. 
It follows that $(\scL_v^\vee \otimes \scL_w) |_Z \cong \scO_Z$
for any $w \in R_1$, and one has
$
 S = H^0 \lb \bigoplus_{w \in R_1}
       \lb \scL_w \otimes \scL_v ^\vee \rb |_Z \rb.
$
Note that
$
 \bigoplus_{w \in V} \scL_w \otimes \scL_v^\vee
$
is the universal family,
which is normalized in such away that
the line bundle corresponding to the vertex $v$ is trivial,
and $S \otimes_\bC \scO_Z$ is the (trivial) family
of destabilizing subobjects
of the restriction of this universal family to $Z$.

It follows from the definition of the functor $\Phi$ that
\begin{equation} \label{eq:S}
 S
  = H^0 \lb \lb \textstyle{\bigoplus_{w \in R_1}}
      \scL_w \rb \otimes \scL_v^\vee|_Z \rb
  \subset H^0 \lb \lb \textstyle{\bigoplus_{w \in V}}
      \scL_w \rb \otimes \scL_v^\vee|_Z \rb
  \cong \scH^0 ( \Phi(\scL_v^\vee|_Z))
\end{equation}
where $\scH^0$ denotes the 0-th cohomology functor
with respect to the standard $t$-structure on $D^b \module \bC \Gamma$.
Since $\scH^i(\Phi(\scL_v^\vee|_Z)) = 0$
for $i < 0$,
the above inclusion induces a morphism
$$
 \alpha : S \to \Phi(\scL_v^{\vee}|_Z)
$$
such that the composition
$
 S \to \Phi(\scL_v^{\vee}|_Z)
  \to \Phi(\scL_v^{\vee}|_z)
  \cong E_z
$
is non-zero for any $z \in Z$.
Then $\Phi^{-1}(\alpha)$ is a morphism
$
 \Phi^{-1}(S) \to \scL_v^{\vee}|_Z
$
such that the composition
$
 \Phi^{-1}(S)
  \to \scL_v^{\vee} |_Z
  \to \scL_v^\vee |_z
$
is non-zero for any $z \in Z$.
Since they are both line bundles on $Z$,
the morphism $\Phi^{-1}(\alpha)$ must be an isomorphism.
\end{proof}

\begin{corollary} \label{cr:S0}
For any $v \in R_1$,
one has $\Phi(\scL_v^{\vee}|_Z) \cong S$.
\end{corollary}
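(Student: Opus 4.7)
The plan is to deduce this as an immediate consequence of the preceding lemma. That lemma constructs, for any fixed vertex $v \in R_1$, an isomorphism $\Phi^{-1}(S) \simto \scL_v^\vee|_Z$ of line bundles on $Z$. Since $\Phi$ is an equivalence of triangulated categories (by the hypothesis that the conditions (\textbf{T})+(\textbf{E}) hold for $\theta$), it has a quasi-inverse $\Phi^{-1}$ with $\Phi \circ \Phi^{-1} \cong \id$.

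Applying $\Phi$ to both sides of the isomorphism from the preceding lemma immediately yields
$$
 \Phi(\scL_v^\vee|_Z) \cong \Phi(\Phi^{-1}(S)) \cong S,
$$
which is exactly the statement of the corollary. Since $v \in R_1$ was arbitrary, this establishes the conclusion for all such $v$.

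There is no real obstacle here; the only observation needed is that the preceding lemma was proved separately for each $v \in R_1$, so the identification $\Phi(\scL_v^\vee|_Z) \cong S$ is uniform in $v$. (In particular, the line bundles $\scL_v^\vee|_Z$ for $w \in R_1$ are all isomorphic to $\scL_v^\vee|_Z$, as was observed in the proof of the preceding lemma via the non-vanishing of arrows internal to $R_1$, which is consistent with the fact that their images under $\Phi$ coincide.)
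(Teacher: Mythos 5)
Your proof is correct and matches the paper's intent exactly: the corollary is stated without proof precisely because it follows by applying the equivalence $\Phi$ to the isomorphism $\Phi^{-1}(S) \simto \scL_v^\vee|_Z$ established in the preceding lemma for each $v \in R_1$. Nothing further is needed.
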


Set $\scF = \scL_v(Z)|_Z$, so that we have an exact sequence
$$
 0 \to \scL_v \to \scL_v(Z) \to \scF \to 0.
$$
The dual of the short exact sequence
$$
 0 \to \scL_v^\vee(-Z) \to \scL_v^\vee \to \scL_v^\vee|_Z \to 0
$$
gives the distinguished triangle
$$
 (\scL_v^\vee|_Z)^\vee \to \scL_v \to \scL_v(Z) \to (\scL_v^\vee|_Z)^\vee[1],
$$
which shows that
$
 \scF
  \cong (\scL_v^\vee|_Z)^\vee[1]
  \cong \Phi^{-1}(S)^\vee[1].
$

\begin{corollary} \label{cor:twist}
One has
$$
\bR\Hom(\scF, \scL_w) =
  \begin{cases}
   \bC[-1]  &w \in R_1, \\
   0 &w \in R_2.
  \end{cases}
$$
\end{corollary}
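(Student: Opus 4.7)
The plan is to pass everything to the module side via the equivalence $\Phi$. Using $\scF \cong \Phi^{-1}(S)^\vee[1]$ and the fact that $\Phi^{-1}(S) \cong \scL_v^\vee|_Z$ is a perfect complex on the smooth variety $\scM_\theta$, one rewrites
$$
\bR\Hom(\scF, \scL_w) \cong \bR\Hom(\Phi^{-1}(S)^\vee, \scL_w)[-1] \cong \bR\Gamma(\scM_\theta, \Phi^{-1}(S) \otimes \scL_w)[-1] \cong \bR\Hom(\scL_w^\vee, \Phi^{-1}(S))[-1].
$$

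Now apply $\Phi$ to the final expression to get $\bR\Hom_{\bC\Gamma}(\Phi(\scL_w^\vee), S)[-1]$. Condition $(\bfT)$ implies $\Phi(\scE^\vee) \cong \End(\scE) \cong \bC\Gamma$ concentrated in degree zero, and decomposing $\scE^\vee = \bigoplus_w \scL_w^\vee$ against $\bC\Gamma = \bigoplus_w P_w$ identifies $\Phi(\scL_w^\vee)$ with the indecomposable projective module $P_w = \bC\Gamma \cdot e_w$; this can also be checked directly through $\Hom(\Phi(\scL_w^\vee), \Phi(\scO_y)) \cong \Hom(\scL_w^\vee, \scO_y) \cong \bC \cong \Hom(P_w, E_y)$. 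Since $P_w$ is projective and $\dim S = \chi_{R_1}$, one has
$$
\bR\Hom_{\bC\Gamma}(P_w, S) = \Hom(P_w, S) = S_w =
\begin{cases}
\bC & w \in R_1, \\
0 & w \in R_2,
\end{cases}
$$
concentrated in degree zero. Combining the two computations gives $\bR\Hom(\scF, \scL_w) \cong S_w[-1]$, which is the assertion.

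The argument contains no genuine obstacle; it rests entirely on the tilting equivalence, the smoothness of $\scM_\theta$ (so that $\Phi^{-1}(S)$ is perfect and the duality step in the first display is valid), and the standard computation of $\Hom(P_w, -)$ for path-algebra projectives. The one point requiring slight care is to verify that $\Phi(\scL_w^\vee) \cong P_w$ holds with no homological shift, but this is immediate from the acyclicity built into condition $(\bfT)$.
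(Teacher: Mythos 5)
Your proof is correct and rests on the same essential ingredients as the paper's argument: the identification $\scF \cong \Phi^{-1}(S)^\vee[1]$, the fact that $\Phi(\scL_v^\vee|_Z) \cong S$ (Corollary \ref{cr:S0}), the dimension vector $\dim S = \chi_{R_1}$, and duality on the smooth $\scM_\theta$. The paper simply reads off $\bR\Gamma\bigl((\scL_v^\vee|_Z)\otimes\scL_w\bigr) = S_w$ directly from \eqref{eq:S} and Corollary \ref{cr:S0} before dualizing, whereas you repackage the identical computation as $\bR\Hom_{\bC\Gamma}(P_w,S) = e_w S = S_w$ after verifying $\Phi(\scL_w^\vee)\cong P_w$; this is a slightly longer but equivalent route.
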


\begin{proof}
\eqref{eq:S} and Corollary \ref{cr:S0} imply
$$
\bR\Gamma((\scL_v^{\vee}|_Z) \otimes \scL_w) =
  \begin{cases}
   \bC  & w \in R_1, \\
   0 & w \in R_2,
  \end{cases}
$$
which together with
$
 \scF \cong (\scL_v^\vee|_Z)^\vee[1]
$
gives the assertion.
\end{proof}

\begin{definition}[{\cite[Definition 1.1]{Seidel-Thomas}}]
An object $\scG$ in a triangulated category
with the trivial Serre functor of degree 3
is a {\em spherical object} if
$$
 \Ext^k(\scG, \scG) =
  \begin{cases}
   \bC & k = 0, 3, \\
   0 & \text{otherwise}. 
  \end{cases}
$$
\end{definition}

\begin{lemma} \label{lm:spherical}
$\scF$ is a spherical object.
\end{lemma}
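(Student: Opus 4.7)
The plan is to transport the computation of $\bR\Hom(\scF,\scF)$ across the equivalence $\Phi$ and reduce it to computing $\bR\Hom(S,S)$ in $D^b \module \bC\Gamma$, where $S$ is a well-understood rigid simple module. Recall we have just shown the isomorphism $\scF \cong \Phi^{-1}(S)^\vee[1]$, so after the shift cancels one obtains
\[
 \bR\Hom(\scF,\scF) \cong \bR\Hom\lb \Phi^{-1}(S)^\vee, \Phi^{-1}(S)^\vee \rb.
\]
Since $\scM_\theta$ is smooth, $\Phi^{-1}(S)$ is a perfect complex, and derived duality on perfect complexes reverses arguments, giving $\bR\Hom(\Phi^{-1}(S)^\vee, \Phi^{-1}(S)^\vee) \cong \bR\Hom(\Phi^{-1}(S), \Phi^{-1}(S))$. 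Applying $\Phi$ then identifies this with $\bR\Hom(S,S)$.

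Next I would compute $\bR\Hom(S,S)$ from what is already known about $S$. Lemma \ref{lm:simple} gives $\Hom(S,S) = \bC$, while the rigidity of $S$ established in Section \ref{sc:rigid_subobject} gives $\Ext^1(S,S) = 0$. For the higher Ext groups I would invoke Serre duality on the Calabi-Yau threefold $\scM_\theta$: since $\Phi^{-1}(S)$ has compact support (Lemma \ref{lm:support_compact}, Lemma \ref{lm:support}), one has $\Ext^i(\Phi^{-1}(S), \Phi^{-1}(S)) \cong \Ext^{3-i}(\Phi^{-1}(S), \Phi^{-1}(S))^\vee$ for all $i$, and the equivalence $\Phi$ transports this to Serre duality on $S$. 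Thus $\Ext^3(S,S) \cong \Hom(S,S)^\vee = \bC$ and $\Ext^2(S,S) \cong \Ext^1(S,S)^\vee = 0$, and Ext groups outside $[0,3]$ vanish for degree reasons on the threefold. Combined, this is exactly the spherical condition.

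The only delicate points are the two identifications that hop across the equivalence: the compatibility of derived dualization on perfect complexes with $\bR\Hom$, and the fact that Serre duality for $S$ on the module side is controlled by Serre duality on $\scM_\theta$ applied to the compactly supported object $\Phi^{-1}(S)$. Both are standard once one knows $\Phi^{-1}(S)$ is a perfect complex with compact support, which was established earlier. Given these, the proof is a short bookkeeping of shifts and dualities, so no significant obstacle remains.
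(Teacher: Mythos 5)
Your proof is correct and follows essentially the same route as the paper: the paper's own (very terse) argument likewise computes $\Hom(\scF,\scF)\cong\bC$ from Lemma \ref{lm:simple}, $\Ext^1(\scF,\scF)=0$ from the rigidity of $S$, and fills in degrees $2$ and $3$ by Serre duality on the Calabi--Yau threefold. You have merely made explicit the bookkeeping (the identification $\scF\cong\Phi^{-1}(S)^\vee[1]$, the compatibility of dualization with $\bR\Hom$, and the role of compact support in applying Serre duality) that the paper leaves implicit.
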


\begin{proof}
Lemma \ref{lm:simple} gives
$$
\Hom(\scF, \scF)
 \cong \Ext^3(\scF, \scF)^\vee
 \cong \bC,
$$
and the rigidity of $S$ gives
$$
\Ext^1(\scF, \scF)
 \cong \Ext^2(\scF, \scF)^\vee
 \cong 0.
$$
\end{proof}

\begin{lemma} \label{lm:tautological_type0}
Let $\bigoplus_{w \in V} \scL''_w$
be the kernel of the morphism
from $\bigoplus_{w \in V} \scL_w$
to the family $\scQ$
of quotients parametrized by $Z$;
\begin{equation} \label{eq:tautological_type0}
 \bigoplus_{w \in V} \scL''_w
  = \Ker \lb \bigoplus_{w \in V} \scL_w \to \scQ \rb.
\end{equation}
Then the tautological bundle
$\bigoplus_{w \in V} \scL'_w$
on $\scM_{\theta'}$ is given by
\begin{equation} \label{eq:L'}
 \bigoplus_{w \in V} \scL'_w =
  \begin{cases}
   \bigoplus_{w \in V} \scL''_w & v_0 \in R_1, \\
   \bigoplus_{w \in V} \scL''_w(Z) & v_0 \in R_2
  \end{cases}
\end{equation}
where $v_0$ is the fixed vertex such that
$\scL_{v_0} \cong \scL'_{v_0}$ is the trivial line bundle
as in Section \ref{sc:moduli}.
\end{lemma}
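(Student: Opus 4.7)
The plan is to obtain the $\theta'$-tautological family as an \emph{elementary modification} of the $\theta$-tautological family along the Cartier divisor $Z$, followed by a twist by $\scO(Z)$ in one of two cases to fix the normalization at $v_0$. Because we are in the type $0$ case, $f : \scM_\theta \to X_{\theta_0}$ is an isomorphism and the induced map $\phi : \scM_\theta \simto \scM_{\theta'}$ identifies the two moduli spaces; hence both tautological families can be regarded as living on the common scheme $\scM_\theta$. Outside $Z$ they already agree, since $\theta$- and $\theta'$-stability coincide there, so only the behavior along $Z$ needs to be analyzed.

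First I would use the rigidity of $S$, established earlier in this section, to pin down $\bigoplus_{w \in V} \scL_w|_Z$. Rigidity forces the family of destabilizing subobjects over each connected component of $Z$ to be constant equal to $S \otimes \scO_Z$; this yields a global short exact sequence
$$
0 \to S \otimes_\bC \scO_Z \to \Big(\bigoplus_{w \in V} \scL_w\Big)\big|_Z \to \scQ \to 0,
$$
and hence a surjection $\bigoplus_{w \in V} \scL_w \twoheadrightarrow i_* \scQ$ on $\scM_\theta$, whose kernel is by definition the sheaf $\bigoplus_{w \in V} \scL''_w$ of \eqref{eq:tautological_type0}.

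Next I would verify that $\bigoplus_{w \in V} \scL''_w$ is, up to a twist, a flat family of $\theta'$-stable representations of $\Gamma$. Off $Z$ it coincides with $\bigoplus_{w \in V} \scL_w$, which is $\theta'$-stable there. Along $Z$, the standard elementary modification computation, using that $Z$ is Cartier and hence $\operatorname{Tor}_1(i_*\scQ,\scO_Z) \cong \scQ \otimes \scN_Z^\vee$, produces a short exact sequence
$$
0 \to \scQ \otimes \scN_Z^\vee \to \Big(\bigoplus_{w \in V} \scL''_w\Big)\big|_Z \to S \otimes_\bC \scO_Z \to 0.
$$
The fiber at $z \in Z$ is therefore a non-trivial extension of $S$ by $Q_z$, which by \pref{lm:fiber} is precisely the $\theta'$-stable representation in the S-equivalence class $[S \oplus Q_z]$. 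Consequently $\bigoplus_{w \in V} \scL''_w$ realizes the $\theta'$-tautological family up to a simultaneous twist by a line bundle.

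Finally I would fix the normalization at $v_0$. For $v \in R_1$ the composition $\scL_v \hookrightarrow \bigoplus_{w \in V} \scL_w \twoheadrightarrow i_*\scQ$ vanishes, so $\scL''_v = \scL_v$; in particular if $v_0 \in R_1$ then $\scL''_{v_0} = \scL_{v_0} = \scO_{\scM_\theta}$ and no further twist is required. If instead $v_0 \in R_2$, then $\scL''_{v_0} \cong \scL_{v_0}(-Z) = \scO_{\scM_\theta}(-Z)$, and one must tensor the whole family by $\scO(Z)$ to enforce $\scL'_{v_0} = \scO_{\scM_\theta}$; this recovers formula~\eqref{eq:L'}. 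The main technical point is the local elementary modification computation identifying the fibers of $\bigoplus_{w \in V} \scL''_w$ at points of $Z$ with the correct non-trivial extensions; it relies on both the rigidity of $S$ (so that the subfamily on $Z$ is globally constant) and on $Z$ being a reduced Cartier divisor in the smooth toric 3-fold $\scM_\theta$.
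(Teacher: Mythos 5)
Your overall strategy is the same as the paper's: define $\bigoplus_w \scL''_w$ by the elementary modification \eqref{eq:tautological_type0}, restrict to $Z$ to get the exact sequence $0 \to \scQ \otimes \scO(-Z) \to \lb \bigoplus_w \scL''_w\rb|_Z \to \scS \to 0$, argue that the fibers over $Z$ are the $\theta'$-stable non-trivial extensions, conclude that $\bigoplus_w \scL''_w$ is a family of $\theta'$-stable representations inducing the identity on $\scM_\theta \cong \scM_{\theta'}$, and then fix the twist by the normalization at $v_0$ (your computation $\scL''_v = \scL_v$ for $v \in R_1$ and $\scL''_v = \scL_v(-Z)$ for $v \in R_2$ is correct and matches \eqref{eq:L'}).

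However, there is a genuine gap at the crux of the argument: you write that the fiber at $z \in Z$ is ``therefore'' a non-trivial extension of $S$ by $Q_z$, as if non-splitness followed formally from the elementary modification computation. It does not. The short exact sequence $0 \to \scQ\otimes\scO(-Z)|_z \to \bigoplus_w \scL''_w|_z \to S \to 0$ is exactly what one gets from the modification, but in general such a sequence can perfectly well be split: for instance, modifying $\scO^{\oplus 2}$ on $\bC^2$ along $\{x=0\}$ via the quotient $(f,g)\mapsto f|_{x=0}$ yields $x\scO\oplus\scO$, whose restriction to the divisor is the split extension. If the extension split at some $z$, the fiber of $\bigoplus_w\scL''_w$ there would be S-equivalent to $S\oplus Q_z$ but not $\theta'$-stable, and the whole argument collapses. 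The paper spends a dedicated paragraph on precisely this point: using the monomial description of the universal representation from \cite[Lemma 4.5]{Ishii-Ueda_08}, it finds, near each $z$, an arrow $a$ with $s(a)\in R_1$, $t(a)\in R_2$ whose value is a local equation of the Cartier divisor $Z$, so that locally $\scL_{t(a)}\cong\scL_{s(a)}(Z)$ with the arrow the natural inclusion; after the modification the induced map $\scL''_{s(a)}\to\scL''_{t(a)}$ becomes an isomorphism, which couples the sub and the quotient in the fiber and forces non-splitness. The ingredients you invoke (rigidity of $S$, $Z$ reduced Cartier) are not sufficient for this; you need the explicit local structure of the universal family. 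The rest of your write-up (the universal-property step and the normalization) is fine once this is supplied.
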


\begin{proof}%[proof1]
Consider the following diagram:
\begin{equation} \label{eq:elementary_transform}
\begin{CD}
  @. 0 @. 0 \\
  @. @AAA @AAA \\
 0 @>>> \scS @>>> \bigoplus_{v \in V} \scL_v|_Z @>>> \scQ @>>> 0 \\
 @. @AAA @AAA @| \\
 0 @>>> \bigoplus_{v \in V} \scL''_v
    @>>> \bigoplus_{v \in V} \scL_v @>>> \scQ @>>> 0 \\
  @. @AAA @AAA \\
  @. \bigoplus_{v \in V} \scL_v(-Z) @= \bigoplus_{v \in V} \scL_v(-Z) \\
  @. @AAA @AAA \\
  @. 0 @. 0
\end{CD}
\end{equation}
Here, the row
$$
 0 \to \scS \to \bigoplus_{v \in V} \scL_v|_Z \to \scQ \to 0
$$
is the family of $\theta'$-destabilizing sequences on $Z$.
The diagram \eqref{eq:elementary_transform} gives the following diagram:
\begin{equation} \label{eq:elementary_transform2}
\begin{CD}
  @. @. 0 @. 0 \\
  @. @. @AAA @AAA \\
  @. @. \scS @= \scS \\
 @. @. @AAA @AAA \\
 0 @>>> \bigoplus_{v \in V} \scL''_v(-Z)
   @>>> \bigoplus_{v \in V} \scL''_v
    @>>> \bigoplus_{v \in V} \scL''_v|_Z @>>> 0 \\
  @. @| @AAA @AAA \\
 0 @>>> \bigoplus_{v \in V} \scL''_v(-Z)
  @>>> \bigoplus_{v \in V} \scL_v(-Z)
  @>>> \scQ \otimes \scO(-Z) @>>> 0 \\
  @. @. @AAA @AAA \\
  @. @. 0 @. 0
\end{CD}
\end{equation}
The column
$$
 0
  \to \scQ \otimes \scO(-Z)
  \to \bigoplus_{v \in V} \scL''_v
  \to \scS
  \to 0
$$
gives a family of $\theta$-destabilizing sequences on $Z$.
We show that for every $z \in Z$, the extension
\begin{equation} \label{eq:extension2}
 0
  \to \scQ \otimes \scO(-Z)|_z
  \to \bigoplus_{v \in V} \scL''_v|_z
  \to \scS_z
  \to 0
\end{equation}
is non-split.

Recall that $Z$ is defined by the system of equations corresponding to the arrows
whose sources are in $R_1$ and whose targets are in $R_2$.
In terms of local coordinates in \cite{Ishii-Ueda_08}, these equations are represented
by monomials.
Since $Z$ is a Cartier divisor, $Z$ is locally defined by one of these monomials.
Thus, there are an arrow $a$ with $s(a) \in R_1$, $t(a) \in R_2$ and an open neighborhood $U$ of $z$ such that $\scL_{t(a)}|_U \cong \scL_{s(a)}(Z)|_U$ where the map $\scL_{s(a)} \to \scL_{t(a)}$ over $U$ is identified with the natural inclusion map $\scL_{s(a)}|_U \to \scL_{s(a)}(Z)|_U$.
This shows that $\scL''_{s(a)}|_U \to \scL''_{t(a)}|_U$ is an isomorphism,
proving \eqref{eq:extension2} is non-split.

The non-splitting of \eqref{eq:extension2} shows that $\bigoplus_{v \in V} \scL''_v|_z$
is actually $\theta'$-stable.
Thus $\bigoplus_{v \in V} \scL''$ is a family of $\theta'$-stable representations
parametrized by $\scM_{\theta}$.
Then there is a morphism  $\scM_{\theta} \to \scM_{\theta'}$ which is the identity
outside $Z$.
Therefore this morphism is the identity and the tautological bundles
$\bigoplus_{v \in V} \scL'$ and $\bigoplus_{v \in V} \scL''$ coincides up to a line bundle.
Since we normalized the tautological bundles so that $\scL_{v_0}$ and $\scL'_{v_0}$
are trivial line bundles, we obtain \eqref{eq:L'}.
\end{proof}

For a spherical object $\scG$,
the {\em spherical twist functor}
%acting on objects as
defined by
$$
 T_\scG
  = \Cone \lc \hom(\scG, -) \otimes \scE \xto{\ev} - \rc
  : D^b \coh \scM_\theta \to D^b \coh \scM_\theta.
$$
It is an autoequivalence
of triangulated categories
\cite{Seidel-Thomas}.

\begin{proposition} \label{pr:type0}
%If the conditions $(\bfT)+(\bfE)$ hold
%for the stability parameter $\theta$, then
One has
$
 T_\scF \lb \bigoplus_{v \in V} \scL_v \rb
  \cong\begin{cases}\bigoplus_{v \in V} \scL'_v(Z) & v_0 \in R_1 \\ 
                    \bigoplus_{v \in V} \scL'_v & v_0 \in R_2.
       \end{cases}
$
\end{proposition}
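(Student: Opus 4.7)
The plan is to compute $T_\scF(\scL_v)$ vertex by vertex from the defining exact triangle
$$\bR\Hom(\scF, \scL_v) \otimes \scF \xto{\ev} \scL_v \to T_\scF(\scL_v) \to [1]$$
combined with the vanishing in Corollary \ref{cor:twist}. For $v \in R_2$, that corollary gives $\bR\Hom(\scF, \scL_v) = 0$, so the triangle forces $T_\scF(\scL_v) \cong \scL_v$. For $v \in R_1$, the same corollary yields $\bR\Hom(\scF, \scL_v) \cong \bC[-1]$, and the triangle reduces to $\scF[-1] \xto{\ev} \scL_v \to T_\scF(\scL_v) \to \scF$; here the task is to identify the cone as $\scL_v(Z)$.

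To do this I would produce a second triangle with the same outer terms and compare connecting maps. For any $v \in R_1$, the standard sequence for the Cartier divisor $Z$ reads $0 \to \scL_v \to \scL_v(Z) \to \scL_v(Z)|_Z \to 0$, and $\scL_v(Z)|_Z \cong \scF$ because the restrictions $\scL_w|_Z$ for $w \in R_1$ are mutually isomorphic: this was established earlier when identifying $\Phi^{-1}(S)$ with $\scL_w^\vee|_Z$, using rigidity of $S$ to show that no arrow between two vertices of $R_1$ vanishes on $Z$. Rotating yields a triangle $\scF[-1] \xto{\delta} \scL_v \to \scL_v(Z) \to \scF$. The crux is that both $\ev$ and $\delta$ lie in the one-dimensional space $\Ext^1(\scF, \scL_v) = \bC$ and both are non-zero: $\ev$ by construction, and $\delta$ because $\scL_v(Z)$ is torsion-free whereas a split extension $\scL_v \oplus \scF$ would contain the torsion summand $\scF$. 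Hence $\ev$ and $\delta$ agree up to a non-zero scalar, so their cones are isomorphic and $T_\scF(\scL_v) \cong \scL_v(Z)$. This one-dimensionality is the key simplification; without it, pinning down the evaluation map explicitly would be considerably more delicate, and this identification step is the main substantive obstacle of the argument.

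Summing over $v$ yields $T_\scF\lb \bigoplus_v \scL_v \rb \cong \bigoplus_{v \in R_1} \scL_v(Z) \oplus \bigoplus_{v \in R_2} \scL_v$. The final step is to match this against the two cases of the proposition using Lemma \ref{lm:tautological_type0}. From diagram \eqref{eq:elementary_transform} one reads off $\scL''_v = \scL_v$ for $v \in R_1$ and $\scL''_v = \scL_v(-Z)$ for $v \in R_2$. If $v_0 \in R_1$, then $\scL'_v = \scL''_v$, so $\scL'_v(Z)$ equals $\scL_v(Z)$ for $v \in R_1$ and $\scL_v$ for $v \in R_2$; if $v_0 \in R_2$, then $\scL'_v = \scL''_v(Z)$, yielding the same expression. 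In both cases this matches the computed $T_\scF(\bigoplus_v \scL_v)$, completing the proof.
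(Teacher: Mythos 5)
Your proof is correct and takes essentially the same route as the paper, whose proof of this proposition consists of the single sentence that it follows immediately from Corollary \ref{cor:twist} and Lemma \ref{lm:tautological_type0}. Your argument is exactly the unwinding of that claim: the vertex-by-vertex computation of the twist from Corollary \ref{cor:twist}, the identification of the cone with $\scL_v(Z)$ via the one-dimensionality of $\Ext^1(\scF,\scL_v)$ and the non-splitness of the divisor sequence, and the bookkeeping with $\scL''_v$ from Lemma \ref{lm:tautological_type0} are all the intended (but unwritten) details.
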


\begin{proof}
This immediately follows
from Corollary \ref{cor:twist} and
Lemma \ref{lm:tautological_type0}.
\end{proof}

Proposition \ref{pr:type0} shows \pref{th:Xi}
with either $\Xi = T_\scF$ or $\Xi=\scO(-Z) \otimes T_\scF$
in the type 0 case
with rigid subobject.

\subsubsection{Rigid quotient case}
 \label{sc:rigid_quotient}

Assume that the quotient object $Q_z$
instead of the subobject $S_z$ is rigid.
The discussion in the rigid subobject case can be adapted
to this case by replacing $\Ext^i(S, E_y)$ with $\Ext^i(E_y, Q)$,
and one shows that $Z$ is a connected Cartier divisor and
\begin{equation} \label{eq:Q0}
 \Phi^{-1}(Q)
  \cong \scL_v^\vee \otimes \omega_Z[2]
\end{equation}
for $v \in R_2$ where $\omega_Z$ is the dualizing sheaf of $Z$ and
$[2]$ is the 2-shift in the derived category.
The object
$\scF = \scL_v|_Z$ is a spherical object and
\pref{th:Xi} holds
with $\Xi = T_\scF^{-1}$ or $\Xi=\scO(Z)\otimes T_\scF^{-1}$.

\subsection{Type I case}

Assume that the wall $W = W_{R_1}$
separating the chambers $C$ and $C'$
%containing $\theta$ and $\theta'$ respectively
is of type I.
Let $\ell \subset \scM_\theta$ be the curve
contracted by $f$.

\begin{lemma} \label{lm:wall1}
One has $\theta(\varphi_C(\scO_\ell)) > 0$
for any $\theta \in C$ and
$\theta_0(\varphi_C(\scO_\ell)) = 0$
for any $\theta_0 \in W$.
\end{lemma}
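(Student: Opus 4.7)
The plan is to invoke the degree formula \eqref{eq:degree}, namely
$$
\deg(L_C(\theta)|_\ell) = \theta(\varphi_C(\scO_\ell)),
$$
so that the lemma reduces to controlling the intersection number of $L_C(\theta)$ with the contracted curve $\ell$ as $\theta$ varies in $\overline{C}$.

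For the first assertion, I would argue as follows. Since $L_C$ depends only on the chamber $C$, for any integral $\theta \in C$ the line bundle $L_C(\theta)$ coincides, under the isomorphism $\scM_{\theta} \cong \scM_\theta$ internal to $C$, with $\scO_{\scM_\theta}(1)$ of the GIT construction (see \eqref{eq:L_theta1}), and is therefore ample. Extending $\bQ$-linearly (and then $\bR$-linearly), $L_C(\theta)$ remains an ample $\bR$-divisor for every $\theta$ in the open cone $C$, since the ample cone of $\scM_\theta$ is open and convex. Consequently, for any $\theta \in C$ the intersection number $\deg(L_C(\theta)|_\ell)$ with the irreducible curve $\ell$ is strictly positive, which by the degree formula is exactly the inequality $\theta(\varphi_C(\scO_\ell)) > 0$.

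For the second assertion, I would use the identification already established in the previous section, namely $L_C(\theta_0) \cong f^* \scO_{X_{\theta_0}}(1)$ for any $\theta_0 \in W$ generic on the wall. By linearity of $L_C$ in $\theta$, the same pull-back relation holds (at least up to positive scalar, which suffices for the vanishing of a degree) for any $\theta_0 \in W$. Since $\ell$ is, by definition of a type I contraction, collapsed by $f$ to a point in $X_{\theta_0}$, the restriction of $f^*\scO_{X_{\theta_0}}(1)$ to $\ell$ is trivial, giving $\deg(L_C(\theta_0)|_\ell) = 0$. Applying the degree formula again yields $\theta_0(\varphi_C(\scO_\ell)) = 0$, as required.

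The argument is essentially direct once the degree formula \eqref{eq:degree} is in place; the only genuine point to verify carefully is the extension from integral to real stability parameters, and the fact that the identification of $L_C(\theta_0)$ with the pull-back of the polarization on $X_{\theta_0}$ is linear in $\theta_0$ along the wall. Neither presents a serious obstacle, so I do not expect any hard step in this lemma.
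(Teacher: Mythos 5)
Your proposal is correct and follows essentially the same route as the paper: the degree formula \eqref{eq:degree} plus ampleness of $L_C(\theta)$ for $\theta\in C$, and the identification of $L_C(\theta_0)$ with the pull-back of the polarization under the contraction for $\theta_0$ on the wall. The only cosmetic caveat is that the pull-back relation itself does not literally propagate to all of $W$ "by linearity" (it only holds for $\theta_0\in W\cap\overline{C}$); what does propagate is the vanishing of the linear functional $\theta_0\mapsto\theta_0(\varphi_C(\scO_\ell))$, which vanishes on a spanning subset of the hyperplane $W$ and hence on all of it — exactly the conclusion you need.
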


\begin{proof}
For $\theta \in C$,
one has
$
 \theta(\varphi_C(\scO_\ell))
  = \deg L_C(\theta)|_\ell
$
by \eqref{eq:degree},
which is positive since $L_C(\theta) \in \Pic(\scM_\theta)_\bQ$ is ample.

For $\theta_0 \in W$,
the $\bQ$-line bundle $L_C(\theta_0)$ is the pull-back
of an ample $\bQ$-line bundle in $\Pic(\scMbar_{\theta_0})_\bQ$,
so that $\deg L_C(\theta_0)|_\ell$ cannot be positive
since $\ell$ is contracted by the morphism
$\scM_\theta \to \scMbar_{\theta_0}$.
\end{proof}

\begin{lemma} \label{lm:degree1}
If $v_0 \in R_2$, then $\deg \scL_v|_\ell$ is either $0$ or $1$
for any $v \in V$.
If $v_0 \in R_1$, then $\deg \scL_v|_\ell$ is either $0$ or $-1$
for any $v \in V$.
\end{lemma}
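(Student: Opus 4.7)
The plan is to identify $\ell$ with the projective space of destabilizing extensions of $Q$ by $S$ and read off the degrees of $\scL_v|_\ell$ from the universal extension on this $\bP^1$. Because $\ell$ is the exceptional curve of a primitive type I contraction, $f(\ell)$ is a single point, so by Lemma~\ref{lm:fiber} the curve $\ell$ coincides with the whole fibre $\bP(\Ext^1(Q_z, S_z)^\vee)$. In particular the S-equivalence class $[S_z \oplus Q_z]$ is constant along $\ell$, so $S_z = S$ and $Q_z = Q$ for fixed $\theta_0$-stable (hence simple, by Lemma~\ref{lm:simple}) $\bC\Gamma$-modules with $\dim \Ext^1(Q, S) = 2$.

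As a preliminary reduction I would show that $\scL_v|_\ell$ depends only on whether $v \in R_1$ or $v \in R_2$. The restriction to $\ell$ of the destabilizing sub-family $\scS = \bigoplus_{v \in R_1} \scL_v|_\ell$ is fibrewise isomorphic to the simple module $S$, so for any $u, w \in R_1$ one can find a path inside $R_1$ whose value in $S$ is non-zero; the induced composition of line bundle maps $\scL_u|_\ell \to \scL_w|_\ell$ is then fibrewise non-zero, hence an isomorphism. Therefore $\scL_v|_\ell \cong \scM_S = \scO_\ell(k_S)$ for all $v \in R_1$ and $\scL_v|_\ell \cong \scM_Q = \scO_\ell(k_Q)$ for all $v \in R_2$. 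The normalization $\scL_{v_0} = \scO_{\scM_\theta}$ forces $k_S = 0$ if $v_0 \in R_1$ and $k_Q = 0$ if $v_0 \in R_2$, reducing the lemma to showing that $k_S - k_Q = 1$.

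For this last step I would look at the classifying morphism $\phi \colon \ell \to \bP(\Ext^1(Q, S)^\vee)$ sending $z$ to the class of the destabilizing sequence $0 \to S \to E_z \to Q \to 0$. By Lemma~\ref{lm:fiber} this $\phi$ is precisely the identification of $\ell$ with the fibre of $f$ over $f(\ell)$, and is therefore an isomorphism. On the other hand, using $\Hom(Q, S) = 0$ (from simplicity of the two non-isomorphic modules), the extension class of the tautological family over $\ell$ lies in
\[
 \Ext^1_{\scO_\ell \otimes \bC\Gamma}(\scQ, \scS)
  \cong \Ext^1_{\bC\Gamma}(Q, S) \otimes H^0(\ell, \scM_S \otimes \scM_Q^{-1}),
\]
and unwinding the construction shows that $\phi$ coincides with the projectivization of the resulting sheaf map $\scM_Q \to \Ext^1(Q, S) \otimes \scM_S$; in particular $\phi^* \scO(1) \cong \scM_S \otimes \scM_Q^{-1}$. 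Since $\phi$ has degree one, $k_S - k_Q = 1$ as required.

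The main obstacle is this final identification: verifying that the classifying map $\phi$ obtained geometrically from the moduli interpretation is precisely the projectivization of the Ext-class sheaf map, so that $\phi^* \scO(1)$ can be read off as $\scM_S \otimes \scM_Q^{-1}$. The remaining steps are bookkeeping with simple $\bC\Gamma$-modules, line bundles on $\bP^1$, and the normalization at $v_0$.
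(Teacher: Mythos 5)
Your proof is essentially correct, but it takes a genuinely different route from the paper's. The paper argues K-theoretically: using Lemma \ref{lm:wall1} it shows that $\varphi_C(\scO_\ell)$ agrees with a positive multiple of $\chi_{R_1}$ up to a multiple of $\chi_V$, pins the coefficient down to $1$ by indivisibility of $\scO_\ell(k)$ in $K_0(\scM_\theta)$, and then reads off the degrees from $\chi(\scL_v\otimes\scO_\ell(k))=\chi_{R_1}(v)$ by Riemann--Roch on $\bP^1$, the normalization at $v_0$ fixing $k$. That route leans on the derived equivalence through $\varphi_C$, and it delivers as a byproduct the identity $\varphi_C(\scO_\ell(k))=\chi_{R_1}$, which the paper uses immediately afterwards to prove $Z=\ell$. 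Your route is a direct geometric computation on the exceptional $\bP^1$ that avoids the K-theoretic machinery entirely; nothing is lost, since once the degrees are known the same identity follows by Riemann--Roch. Two points need minor repair. First, in the reduction to $k_S$ and $k_Q$: a \emph{directed} path inside $R_1$ with non-vanishing value in $S$ need not exist between an arbitrary ordered pair of vertices; you should instead use that indecomposability (from $\theta_0$-stability) of $S$ forces the undirected graph of non-vanishing arrows on $R_1$ to be connected, so a zig-zag of fibrewise non-zero maps of line bundles on $\ell$, each an isomorphism in one direction or the other, identifies $\scL_u|_\ell$ with $\scL_w|_\ell$. Second, the degree-one claim for $\phi$ is exactly the content of Lemma \ref{lm:fiber} (the classifying map is bijective onto the fibre, hence an isomorphism of smooth rational curves), and the fibrewise non-splitness forced by $\theta$-stability of $E_z$ is what makes the sheaf map $\scO_\ell(k_Q)\to\Ext^1(Q,S)\otimes\scO_\ell(k_S)$ fibrewise injective, giving $\phi^*\scO(1)\cong\scO_\ell(k_S-k_Q)$; with those points in place your computation $k_S-k_Q=1$ and the normalization at $v_0$ yield the stated degrees.
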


\begin{proof}
One of the defining inequalities for the chamber $C$ is given by
$\theta(\chi_{R_1}) > 0$,
which is also written as $\theta(\varphi_C(\scO_\ell)) > 0$
by Lemma \ref{lm:wall1}.
This implies the existence of a positive rational number $q$
satisfying $\theta(\varphi_C(\scO_\ell)) = q \theta(\chi_{R_1})$
for any $\theta \in \Theta$.
Since $\Theta$ is defined as
$
 \Theta = \{ \theta \in \Hom(\bZ^V, \bZ) \mid \theta(\chi_V) = 0 \},
$
the fact that $\theta (q \chi_{R_1} - \varphi_C(\scO_\ell)) = 0$
for any $\theta \in \Theta$ implies the existence of $k \in \bQ$
such that $q \chi_{R_1} - \varphi_C(\scO_\ell) = k \chi_V$.
By substituting any $v_2 \in R_2$ to this equality,
one shows that
$k = -\varphi_C(\scO_\ell)(v_2)$ is an integer.
Since $\varphi_C([\mathrm{pt}]) = \chi_V$,
one has
$
 \varphi_C(\scO_\ell(k)) = q \chi_{R_1}.
$
Since $q$ is positive and
$\scO_\ell(k)$ is not divisible in $K_0(\scM_\theta)$,
one has $q = 1$, so that
$
 \varphi_C(\scO_\ell(k)) = \chi_{R_1}.
$
Since
$$
 \varphi_C(\scO_\ell(k))(v)
  = \sum_i (-1)^i \dim H^i(\scL_v \otimes \scO_\ell(k))
$$
by the definition of $\varphi_C$,
one has
$$
 \deg (\scL_v \otimes \scO_\ell(k))
  = \begin{cases}
      0 & v \in R_1, \\
      -1 & v \in R_2.
     \end{cases}
$$
Since $\scL_{v_0}$ is the trivial bundle
by our normalization,
one concludes that
$$
 k
  = \deg (\scL_{v_0} \otimes \scO_\ell(k))
  = \begin{cases}
      0 & v_0 \in R_1, \\
      -1 & v_0 \in R_2.
     \end{cases}
$$
\end{proof}

Let $Z \subset \scM_\theta$ be the $\theta'$-unstable locus.

\begin{lemma}
$\ell$ is a connected component of $Z$.
\end{lemma}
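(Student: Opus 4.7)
The plan is to prove $\ell$ is a connected component of $Z$ by showing it is both closed and open in $Z$.

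First, $\ell \subseteq Z$ is immediate: for any $y \in \ell$ we have $f(y) = p := f(\ell)$, and by Lemma \ref{lm:fiber} the fiber $f^{-1}(p)$ equals $\bP(\Ext^1(Q_y, S_y)^\vee) \supseteq \ell$, which is positive-dimensional, so $E_y$ is strictly $\theta_0$-semistable. Closedness of $\ell$ in $Z$ follows from $\ell$ being complete in the separated scheme $\scM_\theta$.

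For openness, I would use the type I hypothesis: by definition a type I primitive contraction is small, i.e.\ its exceptional locus equals the contracted curve $\ell$, so $f$ restricts to an isomorphism on $\scM_\theta \setminus \ell$. It therefore suffices to show that $p$ is an isolated point of the image $f(Z) \subset X_{\theta_0}$: once this is established, any Zariski open neighborhood $V$ of $p$ with $V \cap f(Z) = \{p\}$ yields $U := f^{-1}(V)$ satisfying $U \cap Z = f^{-1}(p) = \ell$, proving that $\ell$ is open in $Z$.

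To show $p$ is isolated in $f(Z)$, I would parametrize $f(Z)$ locally near $p$ by the natural map sending an S-equivalence class $[E']$ to the pair $([S_{E'}], [Q_{E'}])$ of $\theta_0$-stable destabilizing subobject and quotient, which lands in the product of moduli spaces of $\theta_0$-stable modules of dimension vectors $\chi_{R_1}$ and $\chi_{R_2}$. The image of $p$ is the pair $([S], [Q])$, which is constant along $\ell$ because $f$ identifies all of $\ell$ to a single S-equivalence class. Isolation of $p$ in $f(Z)$ will then follow from rigidity of both $S$ and $Q$. Since the fiber of $f$ over $p$ is $\bP^1$, one has $\dim \Ext^1(Q,S) = 2$, so by the combinatorial interpretation of $\dim\Ext^1$ in Lemma \ref{lm:dim_Ext} (applied at a suitable $2$-dimensional $\bT$-orbit adjacent to $\ell$) the boundary $\partial R_1$ has exactly two connected components and $R_1$, $R_2$ are annular regions on $T$. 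A direct combinatorial analysis on such annular sub-dimers, in the spirit of the rigid-subobject argument of Section \ref{sc:rigid_subobject}, then yields the required rigidity of $S$ and $Q$.

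The main obstacle I anticipate is a subtlety in applying Lemma \ref{lm:dim_Ext}: its hypothesis requires a point in a $2$-dimensional $\bT$-orbit, whereas $\ell$ contains only two torus-fixed points and a single $1$-dimensional orbit. One must therefore either transfer the topological conclusion about $\partial R_1$ from a nearby $2$-dimensional stratum of $Z$ that specializes to $\ell$, or compute $\Ext^1(Q,S)$ directly from the dimer structure on $R_1$ and $R_2$. Once the rigidity of $S$ and $Q$ is in place, the isolation of $p$ and hence the openness of $\ell$ in $Z$ follow formally from the steps above.
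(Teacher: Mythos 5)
Your overall architecture --- reduce the statement to rigidity of the destabilizing data at points of $\ell$, then deduce that $\ell$ is open and closed in $Z$ --- is the same as the paper's, but the decisive step, rigidity of $S$, is left to an unproven ``direct combinatorial analysis on annular sub-dimers,'' and that route cannot work as described. The configuration you extract ($\dim \Ext^1(Q_z, S_z) = 2$ from Lemma \ref{lm:fiber}, hence $\partial R_1$ with two connected components and $R_1$, $R_2$ annular) is \emph{exactly} the configuration that occurs on a wall of type \III, where the paper explicitly records that $S$ is \emph{not} rigid and moves in a one-parameter family over the curve $B$. So no argument that only sees the annular shape of $R_1$ can establish rigidity; it must use the type I hypothesis in some essential way you have not articulated. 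Note also that the normalization trick of Section \ref{sc:rigid_subobject} (setting arrow values to $0$ or $1$ by gauge transformations) fails on an annulus precisely because of the holonomy around the nontrivial loop, and that Lemma \ref{lm:dim_Ext} is in any case inapplicable here: since (as Proposition \ref{pr:Zell} will show) $Z = \ell$, there is no two-dimensional $\bT$-orbit in $Z$ to apply it to, and your proposed fix --- transferring the conclusion from a two-dimensional stratum of $Z$ specializing to $\ell$ --- is vacuous for the same reason.

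The paper obtains rigidity with no combinatorics at all, by using the assumed derived equivalence: it sets $S = \Phi_C(\scO_\ell(k))$, which is an honest module by the degree computation of Lemma \ref{lm:degree1}, and transports $\Ext^1_{\scM_\theta}(\scO_\ell(k), \scO_\ell(k)) = 0$ to $\Ext^1_\Gamma(S, S) = 0$. The same equivalence gives $\Hom_\Gamma(S, E_y) \cong \Hom(\scO_\ell(k), \scO_y)$, so $S$ destabilizes $E_y$ exactly when $y \in \ell$; rigidity then makes $[S]$ an isolated point of the moduli space of $\theta_0$-stable representations with dimension vector $\chi_{R_1}$, and the fiber of $Z \to \scM_{\theta_0}(\chi_{R_1})$ over $[S]$, namely $\ell$, is open and closed in $Z$. (Only rigidity of $S$ is needed, not of $Q$, and one never needs to discuss $f(Z)$ or the smallness of $f$.) Until you supply a correct proof that $S$ is rigid --- which appears to require the derived equivalence, or at least an input beyond the topology of $R_1$ --- your argument has a genuine gap at its central step.
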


\begin{proof}
Since $\deg (\scL_v \otimes \scO_\ell(k))$ is $0$ or $-1$,
the object
$$
 S
  = \Phi_C(\scO_\ell(k))
  = \bR \Gamma \lb \textstyle{\bigoplus_{v \in V}}
      \scL_v \otimes \scO_\ell(k) \rb
$$
in the derived category $D^b \module \bC \Gamma$
is in fact an object of $\module \bC \Gamma$,
which is rigid since $\scO_\ell(k)$ is rigid.
The equivalence $\Phi_C$ gives
$
 \Hom_{\scO_{\scM_\theta}}(\scO_\ell(k), \scO_y)
  \cong \Hom_\Gamma(S, E_y),
$
so that $S$ destabilizes $E_y$
if and only if $y$ is on the rational curve $\ell$.

Since $\theta_0$ is a generic stability parameter
on the wall $W_{R_1}$,
the destabilizing submodule $S_y$ for $y \in Z$
is a $\theta_0$-stable representation
with dimension vector $\chi_{R_1}$.
This gives a morphism from $Z$
to the moduli space of $\theta_0$-stable representations
with dimension vector $\chi_{R_1}$.
Rigidity of $S$ implies that $[S]$ is an isolated point
in this moduli space,
so that $\ell$ is a connected component of $Z$.
\end{proof}

\begin{proposition} \label{pr:Zell}
One has $Z = \ell$.
\end{proposition}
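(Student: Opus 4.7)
My approach is to upgrade the containment $\ell \subseteq Z$ (known from the preceding lemma) to equality by ruling out any additional connected component, mimicking the Grothendieck group strategy of Lemma \ref{lm:connected}. Suppose for contradiction $Z_2 \subseteq Z$ is a connected component disjoint from $\ell$, and pick a point $z \in Z_2$ in a two-dimensional $\bT$-orbit. The destabilizing sequence $0 \to S_z \to E_z \to Q_z \to 0$ has simple $\theta_0$-stable factors $S_z, Q_z$ with dimension vectors $\chi_{R_1}$ and $\chi_{R_2}$ by Lemma \ref{lm:simple}. In particular $[S_z] = \chi_{R_1} = [S]$ in $K_0(\Gamma) \cong \bZ^V$, and since $S = \Phi_C(\scO_\ell(k))$ is the destabilizer along $\ell$, applying $\Phi_C^{-1}$ gives
\[
 [\Phi_C^{-1}(S_z)] = [\scO_\ell(k)] \quad \text{in } K_0(\scM_\theta).
\]

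Next, I transport the support analysis of Lemmas \ref{lm:ExtSE}, \ref{lm:support}, and \ref{lm:Z1_compact} to $S_z$ in place of $S$: stability of $E_y$ and of $S_z$ (together with the Calabi--Yau property of $\scM_\theta$) gives $\Hom^{\ast}(S_z, E_y) = 0$ whenever $y \notin Z_2$, because any nonzero map $S_z \to E_y$ would provide a destabilizing submodule of $E_y$, while Serre-dual arguments handle the higher Ext groups. Indecomposability of $S_z$ then forces $\Phi_C^{-1}(S_z)$ to be a nonzero object whose support is a connected compact subvariety of $Z_2$, equal in fact to $Z_2$ itself.

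The contradiction is then extracted by pairing the equality $[\Phi_C^{-1}(S_z)] = [\scO_\ell(k)]$ against the tautological line bundles $\scL_v$ for $v \in V$ via the perfect Euler pairing of Proposition \ref{pr:perfect}. On one side, the pairings against $\scL_v$ recover the degree pattern of Lemma \ref{lm:degree1}: $\deg \scL_v|_\ell \in \{0, \pm 1\}$ according to whether $v \in R_1$ or $v \in R_2$ (and the location of $v_0$). On the other side, the pairings are Euler characteristics of a sheaf supported entirely on the disjoint curve $Z_2$, so the dimension-filtered and degree-filtered contributions must agree at every vertex. This forces $Z_2$ to duplicate all numerical characteristics of $\ell$ in a way incompatible with their disjointness.

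The main obstacle is the final step: since any two exceptional curves of a single primitive type I contraction are numerically equivalent as $1$-cycles, a coarse numerical argument is insufficient, and one must exploit the finer information recorded by the full collection of tautological bundles $\scL_v$ under the perfect Euler form, together with the rigid cohomological vanishings coming from the equivalence $\Phi_C$. The crux is verifying that $[\scO_\ell(k)]$ admits a distinguishing pairing with some $\scL_v$ that must fail for any object supported on a curve disjoint from $\ell$, ruling out $Z_2$ and hence forcing $Z = \ell$.
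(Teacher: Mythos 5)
Your overall strategy (compare $K$-theory classes via the isomorphism $\varphi_C$ and derive a contradiction) is the right one, but there is a genuine gap, and you have in fact put your finger on it yourself in your last paragraph without resolving it. The missing step is that any connected component $Z_1$ of $Z\setminus\ell$ must be a \emph{compact Cartier divisor}, not a curve. This follows from the same argument as in the type $0$ case: away from the flopped curve, $\scM_\theta$ and $\scM_{\theta'}$ are identified, and two families of quiver representations (collections of line bundles and maps) on a normal variety that agree in codimension one agree everywhere; since the two tautological families differ exactly on $Z$, every component of $Z$ other than $\ell$ is two-dimensional, and the type $0$ analysis then shows it is a connected compact Cartier divisor carrying either $\Phi_C^{-1}(S)$ or $\Phi_C^{-1}(Q)$ as a line bundle (possibly twisted by $\omega_{Z_1}$). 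Without this dimension count your final step cannot be closed: as you correctly observe, a sheaf supported on a curve numerically proportional to $\ell$ pairs with every $\scL_v$ exactly as $\scO_\ell(k)$ does, so no Euler-form computation against the tautological bundles can exclude a one-dimensional $Z_2$. Your proposal therefore ends with an unverified ``crux'' rather than a contradiction.

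Once the divisoriality of $Z_1$ is in hand, the contradiction is immediate and much cleaner than the pairing gymnastics you envisage: one gets $\varphi_C(\scL_v^\vee|_{Z_1})=\chi_{R_1}$ (or $\varphi_C(\scL_v^\vee\otimes\omega_{Z_1})=\chi_{R_2}$), while $\varphi_C(\scO_\ell(k))=\chi_{R_1}$ and $\varphi_C(\scO_\ell(k-1))=-\chi_{R_2}$. Since $\varphi_C$ is an isomorphism, this would force the class of a line bundle on a nonempty compact divisor to equal the class of a sheaf supported on a curve; but the former has nonzero image in $F_2/F_1$ of the filtration by dimension of support (detected by the perfect pairing with $F^1/F^2\cong\Pic(\scM_\theta)_\bQ$), whereas the latter lies in $F_1$. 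So the dimension filtration, not the fine degree pattern of the $\scL_v$ along curves, is what separates the two classes. I would also flag a smaller imprecision: your vanishing $\Hom^{*}(S_z,E_y)=0$ should be asserted for $y\notin Z$ (and the support then pinned down to $Z_2$ by connectedness and indecomposability), not directly for $y\notin Z_2$.
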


\begin{proof}
Let $Z_c = Z \setminus \ell$ be the complement of $\ell$ in $Z$,
and assume for contradiction that $Z_c$ is non-empty.
Take a connected component $Z_1$ of $Z_c$.
The same argument as in the type 0 case shows that
$Z_1$ is a compact Cartier divisor and
one has either
$\Phi_C(\scL_v^\vee|_{Z_1}) = S$ or
$\Phi_C(\scL_v^\vee \otimes \omega_{Z_1}) = Q$,
so that
$\varphi_C(\scL_v^\vee|_{Z_1}) = \chi_{R_1}$ or
$\varphi_C(\scL_v^\vee \otimes \omega_{Z_1}) = \chi_{R_2}$.
This contradicts
$\varphi_C(\scO_\ell(k)) = \chi_{R_1}$ and
$\varphi_C(\scO_\ell(k-1)) = - \chi_{R_2}$,
and Proposition \ref{pr:Zell} is proved.
\end{proof}

\begin{corollary} \label{cr:strict_transf}
For any $v \in V$,
the tautological bundle
$\scL'_v$ on $\scM_{\theta'}$
is the strict transform of the tautological bundle
$\scL_v$ on $\scM_{\theta}$.
\end{corollary}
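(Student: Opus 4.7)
The plan is to exhibit a common open subscheme $U$ of $\scM_\theta$ and $\scM_{\theta'}$ whose complement has codimension two on each side, observe that the tautological bundles agree on $U$ by the universal property of the moduli functor, and invoke smoothness to extend uniquely across the complement.

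Set $U := \scM_\theta \setminus \ell$. By \pref{pr:Zell} the unstable locus $Z$ equals $\ell$, so every representation parametrized by $U$ is $\theta_0$-stable and therefore also $\theta'$-stable. The moduli property of $\scM_{\theta'}$ then yields a morphism $\psi \colon U \to \scM_{\theta'}$ with $f' \circ \psi = f|_U$; since $f|_U$ is an isomorphism onto $X_{\theta_0} \setminus \{p\}$ (with $p := f(\ell)$) and $f'$ is proper and birational, $\psi$ is an open immersion with image $\scM_{\theta'} \setminus f'^{-1}(p)$. Moreover $\psi^{*} \scL'_v \cong \scL_v|_U$ for every $v \in V$, because both sides are components of the same universal family on $U$ normalized so that their $v_0$-components are trivial.

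To finish, I would show that $f'^{-1}(p) \subset \scM_{\theta'}$ has codimension at least two; smoothness of $\scM_{\theta'}$ then guarantees that $\scL'_v$ is the unique line bundle restricting to $\scL_v|_U$, that is, the strict transform of $\scL_v$. For the codimension bound, $f'$ is either an isomorphism or a primitive contraction of the smooth projective Calabi-Yau 3-fold $\scM_{\theta'}$ onto $X_{\theta_0}$. The torus-theoretic argument of \pref{lm:abscence_II} carries over to $f'$ because the input \pref{lm:fiber} is manifestly symmetric in $\theta$ and $\theta'$, ruling out type \II; hence $f'$ is of type $0$, \textup{I}, or \textup{III}, and in each of these cases $f'^{-1}(p)$ is at most one-dimensional. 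The main obstacle is exactly this symmetry argument: verifying that the exclusion of type \II\ contractions applies equally to $f'$ even though we have not yet established $(\bfT)+(\bfE)$ for $\theta'$.
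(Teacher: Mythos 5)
Your proof is correct and is in essence the paper's own argument: the tautological families agree on the common open subset $\scM_\theta \setminus \ell \cong \scM_{\theta'} \setminus f'^{-1}(p)$ by the universal property and the normalization at $v_0$, and since both complements have codimension two each $\scL'_v$ is the unique extension, i.e.\ the strict transform, of $\scL_v$. The only point where you diverge is in justifying the codimension bound on the $\scM_{\theta'}$ side, and the ``obstacle'' you flag there is not a real one, for two reasons. First, \pref{lm:abscence_II} and its inputs (\pref{lm:fiber}, \pref{lm:R_connected}, \pref{lm:dim_Ext}) use only $\theta_0$-stability of $S_z$ and $Q_z$ and the combinatorics of the dimer model, never the conditions (\bfT)+(\bfE); the argument is manifestly symmetric in $\theta$ and $\theta'$, so type \II\ is excluded for $f'$ as well. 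Second, and more cheaply, the paper has already recorded at the start of Section \ref{sc:variation_of_bundles} that $\scM_\theta$ and $\scM_{\theta'}$ are both crepant resolutions of $X_{\theta_0}$ (this rests on \pref{th:Ishii-Ueda_08}, which requires no derived equivalence), so the induced birational map is a flop or an isomorphism and in particular an isomorphism in codimension one; since $f$ contracts no divisor, neither does $f'$, and $f'^{-1}(p)$ is at most a curve. The paper's printed proof is exactly this short version: the bundles agree off the unstable locus, and $Z=\ell$ has codimension two.
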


\begin{proof}
Since the tautological bundle does not change
outside of the unstable locus,
the fact that the unstable locus $Z = \ell$ is of codimension two implies that
the tautological bundle on $\scM_{\theta'}$
is the strict transform of
the tautological bundle on $\scM_{\theta}$.
\end{proof}

\begin{lemma} \label{lm:flop}
The diagram \eqref{eq:VGIT} is
the Atiyah flop.
\end{lemma}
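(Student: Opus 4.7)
The plan is to identify the diagram \eqref{eq:VGIT} with the standard Atiyah flop of an ordinary double point by computing the normal bundle of the contracted curve $\ell \subset \scM_\theta$ and then invoking the local classification of threefold conifold contractions.

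First I would verify $\ell \cong \bP^1$. By Proposition \ref{pr:Zell} the unstable locus equals $\ell$, and every point $z \in \ell$ maps to the single point $p := f(\ell) \in X_{\theta_0}$, so the representations $E_z$ for $z \in \ell$ share a common S-equivalence class with constant simples $S$ and $Q$. Lemma \ref{lm:fiber} therefore identifies $\ell \cong \bP(\Ext^1(Q, S)^\vee)$, and the type I hypothesis (the fibers of $f$ are curves, not surfaces) forces $\dim_\bC \Ext^1(Q, S) = 2$, so $\ell \cong \bP^1$.

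Next, I would compute $N := N_{\ell/\scM_\theta}$. The Calabi-Yau condition on $\scM_\theta$ together with $\omega_{\bP^1} \cong \scO(-2)$ gives $\det N \cong \scO_{\bP^1}(-2)$, so $N \cong \scO(a) \oplus \scO(-2-a)$ for some $a \le -1$. To force $a = -1$ I would use the rigidity of $S = \Phi_C(\scO_\ell(k))$ that was invoked earlier in the argument showing $\ell$ is a connected component of $Z$: the derived equivalence $\Phi_C$ gives $\Ext^1_{\scM_\theta}(\scO_\ell(k), \scO_\ell(k)) \cong \Ext^1_\Gamma(S, S) = 0$. The local-to-global spectral sequence, using $\mathcal{E}xt^0(\scO_\ell(k), \scO_\ell(k)) \cong \scO_\ell$, $\mathcal{E}xt^1(\scO_\ell(k), \scO_\ell(k)) \cong N$, together with $H^1(\bP^1, \scO) = 0$, then degenerates to give $0 = \Ext^1_{\scM_\theta}(\scO_\ell(k), \scO_\ell(k)) \cong H^0(\ell, N)$. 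Hence $a = -1$ and $N \cong \scO(-1) \oplus \scO(-1)$.

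Finally, a smooth rational curve with $(-1,-1)$-normal bundle in a smooth Calabi-Yau threefold has a neighborhood analytically isomorphic to the small resolution of the conifold singularity $\{xy - zw = 0\} \subset \bC^4$. Thus $X_{\theta_0}$ has an ordinary double point at $p$ and $f$ is one of its two small resolutions. Since $f' : \scM_{\theta'} \to X_{\theta_0}$ is a smooth crepant resolution distinct from $f$ (because $\theta$ and $\theta'$ lie in different chambers), it must be the other small resolution of the same ODP, and the induced birational map $\scM_\theta \dashrightarrow \scM_{\theta'}$ is then the Atiyah flop. The main obstacle is the normal bundle computation; it rests on transferring the rigidity of $S$ back to $\scO_\ell(k)$ via $\Phi_C$, and once $N \cong \scO(-1)^{\oplus 2}$ is in hand the Atiyah-flop identification follows from the standard local theory of conifold small resolutions, with no parallel derived-equivalence hypothesis needed on the $\theta'$-side.
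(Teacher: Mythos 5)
Your overall strategy (compute $N_{\ell/\scM_\theta}$ and invoke the local theory of conifold contractions) is a genuinely different route from the paper's, which instead observes via Corollary \ref{cr:strict_transf} and \eqref{eq:L_theta1} that the strict transform of the $f$-ample bundle $\scO_{\scM_\theta}(1)\otimes f^*\scO_{X_{\theta_0}}(1)^\vee$ becomes $f'$-anti-ample, and then appeals to the classification of small primitive crepant contractions of \emph{toric} Calabi--Yau 3-folds. However, your argument has a circularity at its crux. The whole content of the normal bundle computation is the vanishing $H^0(\ell, N)\cong \Ext^1(\scO_\ell(k),\scO_\ell(k))$, and you propose to get this from $\Ext^1_\Gamma(S,S)=0$ via $\Phi_C$. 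But in the paper the rigidity of $S$ is \emph{deduced} from the rigidity of $\scO_\ell(k)$ (``which is rigid since $\scO_\ell(k)$ is rigid''), and that in turn rests on $\ell$ being a $(-1,-1)$-curve --- exactly what you are trying to prove. No independent proof that $S$ is rigid is available from the type I hypothesis alone: the analogous destabilizing object in the type \III\ case is explicitly not rigid, and even granting that the moduli of $\theta_0$-stable representations with dimension vector $\chi_{R_1}$ is $0$-dimensional near $[S]$, that does not give $\Ext^1(S,S)=0$ without an unobstructedness argument. The gap is not cosmetic: for a general (non-toric) CY 3-fold, a primitive type I contraction can contract a $(0,-2)$ or $(1,-3)$ curve (Reid's pagodas, length-one width-$n$ flops), and the resulting flop is \emph{not} the Atiyah flop. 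So some toric input is indispensable; the standard fix is to note that a torus-invariant curve in a smooth toric 3-fold with $H^0(N)\neq 0$ actually moves in a torus-visible family, which would force $f$ to contract a surface, contradicting type I.

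A secondary, more easily repaired issue: you assert that $f'$ is a small resolution ``distinct from $f$ because $\theta$ and $\theta'$ lie in different chambers.'' Different chambers do not by themselves force the varieties over $X_{\theta_0}$ to differ --- across a type 0 wall one has $\scM_\theta\cong\scM_{\theta'}$ over $X_{\theta_0}$ and only the tautological bundles change. To rule out $f'=f$ you need the relative-ampleness argument the paper uses: $L_{C'}(\theta')=\scO_{\scM_{\theta'}}(1)$ is ample, while its counterpart $L_C(\theta')$ on $\scM_\theta$ is $f$-anti-ample, so the birational map cannot extend to an isomorphism over $X_{\theta_0}$. With these two points patched (toric unobstructedness for the normal bundle, and the ampleness argument for distinctness of the two resolutions), your local-analytic identification of the Atiyah flop goes through.
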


\begin{proof}
\pref{cr:strict_transf} and \eqref{eq:L_theta1} show that
the strict transform of $\scO_{\scM_\theta}(1) \otimes \scO_{X_{\theta_0}}(1)^\vee$
is $\scO_{\scM_{\theta'}}(-1) \otimes \scO_{X_{\theta_0}}(1)$.
%Consider the divisor $D$
%associated with the line bundle
%$\scO_{\scM_\theta}(1) \otimes \scO_{X_{\theta_0}}(1)^\vee$.
%Then $D$ is $f$-ample and
%$- D$ is $f'$-ample.
Since $f$ is a small primitive contraction of a toric Calabi-Yau 3-fold,
the diagram \eqref{eq:VGIT} is the Atiyah flop.
\end{proof}

Let $\scMtilde = \scM_\theta \times_{X_{\theta_0}} \scM_{\theta'}$
be the fiber product of $\scM_\theta$ and $\scM_{\theta'}$
over $X_{\theta_0}$.
The natural projections will be denoted by
$p : \scMtilde \to \scM_\theta$ and
$q : \scMtilde \to \scM_{\theta'}$.
Both $p$ and $q$ are blow-ups along the unstable loci,
and the exceptional set $E$ is a divisor in $\scMtilde$
isomorphic to $\bP^1 \times \bP^1$.

\begin{theorem}[{Bondal and Orlov \cite{Bondal-Orlov_semiorthogonal}}]
The functor
$$
 \bR q_* \circ \bL p^* : D^b \coh \scM_\theta \to D^b \coh \scM_{\theta'}
$$
is an equivalence of triangulated categories,
whose inverse is given by
$$
 \bR p_* \lb \scO_{\scMtilde}(E) \otimes \bL p^* (-) \rb
  : D^b \coh \scM_{\theta'} \to D^b \coh \scM_{\theta}.
$$
\end{theorem}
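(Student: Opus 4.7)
The plan is to realise $\bR q_* \circ \bL p^*$ as a Fourier--Mukai transform with kernel $\scO_{\scMtilde}$, verify that it is an equivalence via Bridgeland's criterion, and identify the inverse through Grothendieck--Verdier duality. By \pref{lm:flop} the diagram \eqref{eq:VGIT} is the Atiyah flop, so $p$ and $q$ are blow-ups of smooth rational curves in smooth Calabi--Yau $3$-folds, $\scMtilde$ is smooth, and the exceptional divisor $E \cong \bP^1 \times \bP^1$ meets the fibers of $p$ along one ruling and those of $q$ along the other. In particular $(p,q) : \scMtilde \hookrightarrow \scM_\theta \times \scM_{\theta'}$ is a closed embedding, so $\bR q_* \bL p^*$ is the Fourier--Mukai transform with kernel $\scO_{\scMtilde}$.

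Next, I invoke Bridgeland's equivalence criterion for Fourier--Mukai transforms between smooth projective varieties: it suffices to check that the images of the skyscrapers $\scO_y$ form an orthogonal spanning class compatible with the Serre functor. For $y \notin \ell$, $p$ is an isomorphism near $p^{-1}(y)$ and $\bR q_* \bL p^* \scO_y$ is simply the skyscraper at the transported point, giving the required vanishings immediately. For $y \in \ell$, the computation is \'etale-local and reduces to the standard Atiyah flop; the relevant $\Ext$-groups are then computed on $E \cong \bP^1 \times \bP^1$ via the blow-up formula and elementary cohomology of the line bundles $\scO(a,b)$, while the Calabi--Yau condition supplies the three-shift Serre duality matching $\Ext^3$ with $\Hom^\vee$. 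This local step, in particular matching the diagonal $\Ext$-groups at points of $\ell$ with the action of the Serre functor, is the technical heart of the argument.

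For the inverse I apply Grothendieck--Verdier duality to the proper morphism $q$: since $q$ is a blow-up along a smooth codimension-two subvariety, its relative dualising complex is $\scO_{\scMtilde}(E)$ concentrated in degree zero, so the right adjoint of $\bR q_*$ is $\bL q^*(-) \otimes \scO_{\scMtilde}(E)$. Composing with the left adjoint $\bR p_*$ of $\bL p^*$ yields the displayed formula; once $\bR q_* \bL p^*$ is known to be an equivalence, this right adjoint is automatically its two-sided inverse.
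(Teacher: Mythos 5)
The paper offers no proof of this statement at all---it is quoted from Bondal and Orlov---so the only comparison available is with your own argument's internal soundness. Your overall strategy is the standard modern one: realise $\bR q_* \circ \bL p^*$ as the Fourier--Mukai transform with kernel $\scO_{\scMtilde}$ (correct, via the projection formula applied to the closed embedding $(p,q)$), test the functor on the skyscraper sheaves, and obtain the quasi-inverse as the right adjoint $\bR p_* \circ q^!$ with $q^!(-) = \bL q^*(-) \otimes \scO_{\scMtilde}(E)$, which is correct because $K_{\scMtilde} = q^* K_{\scM_{\theta'}} + E$ for the blow-up of a $3$-fold along a smooth curve. (Two slips here: $\bR p_*$ is the \emph{right} adjoint of $\bL p^*$, not the left adjoint; and the formula your argument actually produces is $\bR p_*(\scO_{\scMtilde}(E) \otimes \bL q^*(-))$---the $\bL p^*$ in the displayed statement is evidently a typo, since that functor must accept objects of $D^b \coh \scM_{\theta'}$. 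This is closer to Bridgeland's proof of the flop equivalence than to Bondal--Orlov's original argument via semiorthogonal decompositions of the two blow-ups, but that is a legitimate alternative route.)

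The genuine gap is that the step carrying all of the content is asserted rather than performed. You state that for $y \in \ell$ the required $\Ext$-computations ``reduce to the standard Atiyah flop'' and are ``the technical heart of the argument,'' but you never compute $\bR q_* \bL p^* \scO_y$ (a two-term complex supported on the flopped curve $\ell'$, built from $\scO_{\ell'}(-1)$ and $\scO_{\ell'}(-2)$ up to twist) nor verify that $\Hom^\bullet(\bR q_*\bL p^*\scO_{y_1}, \bR q_*\bL p^*\scO_{y_2})$ matches $\Hom^\bullet(\scO_{y_1},\scO_{y_2})$ for $y_1, y_2 \in \ell$; this is exactly where a naive guess (e.g.\ taking the kernel to be $\scO_E$ rather than $\scO_{\scMtilde}$) would fail, so it cannot be waved through. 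A second, unaddressed point is that Bridgeland's criterion in the form you invoke is stated for smooth \emph{projective} varieties, whereas $\scM_\theta$ and $\scM_{\theta'}$ are only projective over the affine base $X_{\theta_0}$; one must either pass to the subcategories of complexes with proper support over $X_{\theta_0}$ (where Serre duality in the form $\Ext^3 \cong \Hom^\vee$ is actually available) as in Bridgeland--King--Reid, or argue separately that the functor is the identity off the flopping locus and reduce to a proper local model. Neither adaptation is automatic, and without it the appeal to the spanning-class criterion is not yet a proof.
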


\begin{lemma} \label{lm:tautological_type1}
If $v_0 \in R_2$,
then one has
$$
 \bR q_* \circ \bL p^* \lb \scL_v \rb
  \cong \scL_v'
$$
for any $v \in V$.
If $v_0 \in R_1$,
then one has
$$
 \bR q_* \lb \scO_{\scMtilde}(E) \otimes
  \bL p^* \lb \scL_v \rb \rb
  \cong \scL_v'
$$
for any $v \in V$.
\end{lemma}

\begin{proof}
If $v_0 \in R_2$,
then $\deg \scL_v$ is either $0$ or $1$ for any $v \in V$
by Lemma \ref{lm:degree1}, and
$
 \bR q_* \circ \bL p^*(\scL_v) = q_* \lb p^* \scL_v \rb
$
is the strict transform of $\scL_v$.

If $v_0 \in R_1$,
then $\deg \scL_v$ is either $0$ or $-1$ for any $v \in V$
and
$
 \bR q_* \lb \scO_{\scMtilde}(E) \otimes \bL p^* (\scL_v) \rb
  = q_* \lb \scO_{\scMtilde}(E) \otimes p^* \scL_v \rb
$
is the strict transform of $\scL_v$.
\end{proof}

This shows \pref{th:Xi} for type I
with $\Xi =  \bR q_* \circ \bL p^*$ or
$\bR q_* \lb \scO_{\scMtilde}(E) \otimes \bL p^* (-) \rb$.

\subsection{Type {\III} case}

Assume that the wall $W$ separating the chambers $C$ and $C'$
is of type \III,
so that $f : \scM_\theta \to X_{\theta_0}$ contracts
a toric divisor $D \subset \scM_\theta$
to a torus-invariant curve $B \subset X_{\theta_0}$.

\begin{lemma}
The morphism $f' : \scM_{\theta'} \to X_{\theta_0}$
also contracts a divisor $D' \subset \scM_{\theta'}$
to the curve $B \subset X_{\theta_0}$,
and the birational map $\scM_\theta \dashrightarrow \scM_{\theta'}$
associated with the diagram \eqref{eq:VGIT}
extends to an isomorphism.
\end{lemma}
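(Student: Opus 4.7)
The plan is to establish the two parts of the lemma separately: first, that $f'$ is also a primitive contraction of type \III whose exceptional divisor $D'$ maps onto the same curve $B$; and second, that the birational map $\scM_\theta \dashrightarrow \scM_{\theta'}$ extends to a biregular isomorphism.

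For the first part, I would proceed by elimination using the classification of primitive birational contractions of smooth Calabi--Yau 3-folds recalled in Section~\ref{sc:KvsPic}. Since $\theta_0$ is generic on the wall $W_{R_1}$, the morphism $f'$ is a primitive $\bT$-equivariant birational morphism, and so must be of type 0, I, \II, or \III. Type 0 would force $X_{\theta_0}$ to be smooth, contradicting the fact that $f$ contracts the divisor $D$ onto $B$ and therefore makes $X_{\theta_0}$ singular along $B$. Type \II is excluded by \pref{lm:abscence_II}. For type I, the exceptional locus of $f'$ would be a single curve contracted to a point; but this locus, being the $\theta_0$-unstable locus on the $\theta'$ side, parametrizes strictly $\theta_0$-semistable representations of the same Jordan--H\"older type $(R_1,R_2)$ as those on the $\theta$ side, and its image in $X_{\theta_0}$ coincides with $B$, which is positive-dimensional. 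Hence $f'$ is of type \III, with exceptional divisor $D'$ mapping onto $B$.

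For the second part, recall from the opening of Section~\ref{sc:variation_of_bundles} that $\scM_\theta \dashrightarrow \scM_{\theta'}$ is either a flop or an isomorphism. The plan is to exclude the flop case by a toric argument. Both $\scM_\theta$ and $\scM_{\theta'}$ are smooth toric Calabi--Yau 3-folds with the same dense torus $\bT$ (by the $\bT$-equivariance lemma of Section~\ref{sc:walls_and_chambers}), realizing crepant resolutions of the toric variety $X_{\theta_0}$. Their fans $\Sigma,\Sigma'$ are therefore lattice triangulations of the characteristic polygon $\Delta$ sharing the common coarsening that is the fan of $X_{\theta_0}$. In this toric setting, any flop between $\scM_\theta$ and $\scM_{\theta'}$ must be an Atiyah flop of a $\bT$-invariant $(-1,-1)$-curve, which corresponds combinatorially to a diagonal flip (bistellar move) in the triangulation and produces a wall of type I. Since our wall is of type \III, no such flip can occur; hence $\Sigma = \Sigma'$, and $\scM_\theta \simto \scM_{\theta'}$ over $X_{\theta_0}$, so the birational map extends to an isomorphism.

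The main obstacle is the last toric-combinatorial identification: making rigorous the bijection between walls in the GIT chamber structure that change the triangulation of $\Delta$ and type I walls (Atiyah flops of $\bT$-invariant compact curves in toric Calabi--Yau 3-folds). Once this identification is established, the rest of the argument is routine case analysis; the identification itself is a combinatorial statement about the refinement of the secondary polytope structure by the GIT chamber structure for dimer model quivers, and constitutes the crux of the proof.
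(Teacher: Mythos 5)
Your first step is essentially correct but more laborious than needed: since $f$ is crepant and contracts the irreducible divisor $D$ onto $B$, the variety $X_{\theta_0}$ is singular along all of $B$, so any crepant resolution has positive-dimensional fibres over every point of $B$; hence the exceptional locus of $f'$ contains a divisor dominating $B$, which together with \pref{lm:abscence_II} already forces $f'$ to be of type \III\ with $D'$ mapping onto $B$.

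The genuine gap is in the second step, and you have flagged it yourself: the assertion that a flop would ``correspond to a diagonal flip and produce a wall of type I, so no flip can occur since our wall is of type \III'' is not an argument but a restatement of the claim, and your closing paragraph concedes that this identification is the unproved crux. It is also aimed at the wrong target: the type of the wall is a property of the contraction $f$ attached to the pair $(C,C')$, and knowing that type does not by itself exclude that the fixed variety $X_{\theta_0}$ admits two distinct torus-equivariant crepant resolutions. What the paper's one-line proof actually invokes, and what you must supply, is precisely this uniqueness. It holds here because the contraction is \emph{primitive}: all contracted torus-invariant curves lie on a single extremal ray, which forces the two-dimensional cells of the decomposition of the characteristic polygon defining $X_{\theta_0}$ that are not already unimodular to be lattice triangles whose unique non-vertex lattice point (the one corresponding to $D$) lies in the interior of the edge corresponding to $B$; equivalently, $X_{\theta_0}$ has transverse $A_1$ singularities along $B$. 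Such a cell has a unique unimodular triangulation, so $X_{\theta_0}$ has a unique torus-equivariant crepant resolution, and both assertions of the lemma follow at once: $f'$ must coincide with $f$ up to isomorphism, so it contracts a divisor $D'$ onto $B$ and the birational map extends to an isomorphism. Without this uniqueness statement the second half of the lemma remains unproved.
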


\begin{proof}
The morphism $f'$ is determined uniquely
as the torus-equivariant crepant resolution of $X_{\theta_0}$.
\end{proof}

Let $\ell$ be a curve in $D$
which contracts to a point in $B$.

\begin{lemma} \label{lm:wall3}
One has $\theta(\varphi_C(\scO_\ell)) > 0$
for any $\theta \in C$ and
$\theta_0(\varphi_C(\scO_\ell)) = 0$
for any $\theta_0 \in W$.
\end{lemma}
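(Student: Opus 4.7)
The plan is to mimic the proof of Lemma \ref{lm:wall1} verbatim, since the geometry is entirely parallel: the only fact used there was that $\ell$ is contracted by the morphism to $X_{\theta_0}$, and this holds equally well in the type \III{} setting by the assumption that $\ell \subset D$ is contracted to a point in $B \subset X_{\theta_0}$.

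First, for $\theta \in C$, I would apply the formula \eqref{eq:degree}, which gives
$$
 \theta(\varphi_C(\scO_\ell))
  = \deg L_C(\theta)|_\ell.
$$
Since $\theta$ lies in the open chamber $C$, the $\bQ$-line bundle $L_C(\theta) \in \Pic(\scM_\theta)_\bQ$ is ample (because $L_C(\theta)$ is a positive multiple of an ample line bundle on $\scM_\theta$, as $\scO_{\scM_\theta}(1) = L_C(\theta)$ up to the identification of chamber-dependent polarizations; genericity of $\theta$ gives ampleness). Therefore the degree on any curve is strictly positive, yielding $\theta(\varphi_C(\scO_\ell)) > 0$.

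Next, for $\theta_0 \in W$, I would use that $L_C(\theta_0) \cong f^* \scO_{X_{\theta_0}}(1)$, which was established in the lemma preceding the diagram \eqref{eq:VGIT} (more precisely, in the proof that $L_C(\theta')^{-1}$ is $f$-ample). Since $\ell$ is contracted by $f$ to a point, the pullback of any line bundle from $X_{\theta_0}$ has degree zero on $\ell$, so
$$
 \theta_0(\varphi_C(\scO_\ell))
  = \deg L_C(\theta_0)|_\ell
  = \deg f^* \scO_{X_{\theta_0}}(1)|_\ell
  = 0.
$$

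I do not anticipate any real obstacle here: the argument is formal, and the only subtlety is in identifying $L_C$ with a pullback from $X_{\theta_0}$, which has already been done in Section \ref{sc:walls_and_chambers}. The genericity of $\theta_0$ on the wall (used to ensure $\theta_0$ lies only on $W$) plays no role in this statement itself, and the assertion is indeed the natural analog of Lemma \ref{lm:wall1} for type \III{} walls.
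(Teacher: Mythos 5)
Your proposal is correct and matches the paper exactly: the paper's proof of this lemma consists of the single remark that it is identical to that of Lemma \ref{lm:wall1}, and your argument is precisely that proof of Lemma \ref{lm:wall1} transplanted to the type \III{} setting, using \eqref{eq:degree}, ampleness of $L_C(\theta)$ for $\theta \in C$, and the fact that $L_C(\theta_0)$ is pulled back along the contraction $f$ which collapses $\ell$.
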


\begin{proof}
The proof is identical to that of Lemma \ref{lm:wall1}
\end{proof}

\begin{proposition}
One has $Z = D$.
\end{proposition}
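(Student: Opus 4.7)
The plan is to prove the two inclusions $D \subset Z$ and $Z \subset D$ separately.

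For $D \subset Z$, I observe that $f|_D \colon D \to B$ is a surjective morphism from a surface to a curve, so every fiber has dimension at least one. For any $z \in D$, the fiber $f^{-1}(f(z))$ contains a curve through $z$, so there exists $y \ne z$ with $f(y) = f(z)$. Since $f$ factors through $\scMbar_{\theta_0}$, the representations $E_y$ and $E_z$ are $\theta_0$-S-equivalent; if $E_z$ were $\theta_0$-stable, its S-equivalence class would consist only of $E_z$ itself, forcing $y = z$. Hence $E_z$ is strictly $\theta_0$-semistable and $z \in Z$.

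For $Z \subset D$, I argue by contradiction. Suppose $Z_1$ is a connected component of $Z$ disjoint from $D$. Since $f(z) \notin B$ for $z \in Z_1$, one has $f^{-1}(f(z)) = \{z\}$, so $\dim \Ext^1(Q_z, S_z) = 1$ by \pref{lm:fiber}, and the local analysis of Section \ref{sc:rigid_subobject} (or \ref{sc:rigid_quotient}) applies to $Z_1$ without change. This yields that $Z_1$ is a compact Cartier divisor and, in the rigid-subobject case, that $\Phi_C^{-1}(S) \cong \scL_v^\vee|_{Z_1}$ for $v \in R_1$, so $\varphi_C([\scL_v^\vee|_{Z_1}]) = \chi_{R_1}$ in $K_0(\Gamma)$. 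On the other hand, the type I analysis applied to a generic fiber $\ell$ of $f|_D \colon D \to B$, using \pref{lm:wall3} exactly as \pref{lm:degree1} uses \pref{lm:wall1}, produces an integer $k$ with $\varphi_C([\scO_\ell(k)]) = \chi_{R_1}$.

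Combining these equalities and using that $\varphi_C$ is an isomorphism, one deduces $[\scL_v^\vee|_{Z_1}] = [\scO_\ell(k)]$ in $K_0(\scM_\theta)$. But $\scL_v^\vee|_{Z_1}$ has $2$-dimensional support whereas $\scO_\ell(k)$ has $1$-dimensional support, so their images in $F_2/F_1$ of the dimension filtration differ: the former contributes the non-trivial class of the compact toric divisor $Z_1$ in $\Pic(\scM_\theta)$, whereas the latter vanishes. The rigid-quotient case is handled symmetrically using $\varphi_C([\scO_\ell(k-1)]) = -\chi_{R_2}$. The main obstacle is confirming that the arguments of Section \ref{sc:rigid_subobject} are sufficiently local to apply to $Z_1 \subset \scM_\theta \setminus D$ in the type III setting, where $f$ globally is a divisorial contraction rather than an isomorphism; this reduces to the fact that $f$ and $f'$ are isomorphisms from $\scM_\theta \setminus D$ and $\scM_{\theta'} \setminus D'$ onto $X_{\theta_0} \setminus B$, so the situation near $Z_1$ is indistinguishable from a type 0 wall-crossing.
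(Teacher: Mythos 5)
Your argument for $Z \setminus D = \emptyset$ is essentially the paper's: the paper also reduces a putative component of $Z$ away from $D$ to the type~0 analysis and derives a contradiction from $\varphi_C([\scL_v^\vee|_{Z_1}]) = \chi_{R_1} = \varphi_C([\scO_\ell(k)])$ (resp.\ the quotient version), so that part is sound. Your argument for $D \subset Z$ via positive-dimensional fibers of $f$ and uniqueness of the stable object in an S-equivalence class is a reasonable elementary substitute for the paper's route, which instead exhibits $S = \Phi_C(\scO_\ell(k))$ as a $\theta'$-destabilizing subobject of $E_z$ for $z \in \ell$.

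The gap is in the overall logic: knowing (i) $D \subset Z$ and (ii) no connected component of $Z$ is \emph{disjoint} from $D$ does not yield $Z = D$. You must also exclude the possibility that the connected component of $Z$ containing $D$ is strictly larger than $D$ --- for instance another divisorial irreducible component of $Z$ meeting $D$ along a curve. Such a component is invisible to your step (ii), and the type~0 machinery you invoke cannot be applied to it as stated: over the points of $\overline{Z_1} \cap D$ the fibers of $f$ are positive-dimensional, so $\dim \Ext^1(Q_z,S_z) = 1$ fails there, the destabilizing subobject is not constant/rigid over the whole component, and lemmas such as \pref{lm:support} (which identify $\Supp \Phi^{-1}(S)$ with a connected \emph{component} of $Z$) no longer apply. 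The paper closes exactly this hole before treating $Z\setminus D$: the $\theta'$-destabilizing submodules give a morphism from $Z$ to the moduli space of $\theta_0$-stable representations with dimension vector $\chi_{R_1}$; the curve $B$ is a connected component of that moduli space, and $D$ is the preimage of $B$, hence $D$ is open and closed in $Z$, i.e.\ a connected component. (Equivalently, in your language: a rigid destabilizing subobject on $Z_1$ is an isolated point of that moduli space, so it cannot be a limit of the positive-dimensional family over $B$, forcing $\overline{Z_1}\cap D=\emptyset$.) Without some version of this step your proof does not conclude.
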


\begin{proof}
One can prove that $S = \Phi_C(\scO_\ell(k))$ is an object
of the abelian category just as in the type I case,
where $k = -1$ if $v_0 \in R_2$, and
$k = 0$ if $v_0 \in R_1$.
The object $S$ is a $\theta'$-destabilizing subobject
of $E_z$ for $z \in \ell$.
Unlike the type I case,
the object $S$ is not rigid,
and $B$ is a connected component of the moduli space
of $\theta_0$-stable representations
with dimension vector $\chi_{R_1}$,
so that $D$ is a connected component of $Z$.
The complement $Z \setminus D$ is empty
just as in the type I case.
\end{proof}

\begin{lemma} \label{lm:tautological_type3_s}
If $v_0 \in R_1$, then one has
$$
 \scL'_v =
  \begin{cases}
   \scL_v(-D) & \deg  \scL_v|_\ell = -1, \\
   \scL_v & \deg  \scL_v|_\ell = 0.
  \end{cases}
$$
If $v_0 \in R_2$, then one has
$$
 \scL'_v =
  \begin{cases}
   \scL_v & \deg  \scL_v|_\ell = 0, \\
   \scL_v(D) & \deg  \scL_v|_\ell = 1.
  \end{cases}
$$
\end{lemma}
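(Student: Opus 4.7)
The plan is to adapt the argument of Lemma \ref{lm:tautological_type0} to the present setting. The essential difference from the type 0 case is that neither the destabilizing subobject $\scS_z$ nor the quotient $\scQ_z$ is rigid here---both vary in one-parameter families parametrized by the contracted curve $B \subset X_{\theta_0}$. However, the elementary transformation along $D$ is defined purely in terms of the family $\scQ$ on $D$ and its effect on each tautological line bundle depends only on which of $R_1, R_2$ contains the vertex, so the line-bundle computation proceeds just as before.

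First I would perform the elementary transformation on the universal family, setting
\[
 \bigoplus_{v \in V} \scL''_v = \Ker \lb \bigoplus_{v \in V} \scL_v \to \scQ \rb,
\]
where $\scQ$ is the family of $\theta'$-destabilizing quotients on $D$. Since $\scQ$ has support $R_2$, the $v$-th component of $\bigoplus_v \scL_v \to \scQ$ vanishes for $v \in R_1$, giving $\scL''_v = \scL_v$; for $v \in R_2$ the map $\scL_v|_D \to \scQ_v$ is an isomorphism of line bundles on $D$, hence $\scL''_v = \scL_v(-D)$.

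Next I would verify that $\bigoplus_v \scL''_v$ is a family of $\theta'$-stable representations. Dually one obtains at each $z \in D$ a short exact sequence
\[
 0 \to \scQ_z \otimes \scO(-D)|_z \to \bigoplus_{v \in V} \scL''_v|_z \to \scS_z \to 0,
\]
and it suffices to show non-splitting. This should follow exactly as in Lemma \ref{lm:tautological_type0}: since $D$ is Cartier and locally cut out by one of the monomial equations corresponding to arrows from $R_1$ to $R_2$, there exists an arrow $a$ with $s(a) \in R_1$, $t(a) \in R_2$ such that locally $\scL_{t(a)} \cong \scL_{s(a)}(D)$; after the elementary transformation, the induced map $\scL''_{s(a)} \to \scL''_{t(a)}$ becomes locally an isomorphism, producing a nonzero cross-term from the $R_1$-part to the $R_2$-part of $\bigoplus_v \scL''_v|_z$ which obstructs any direct-sum decomposition.

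With $\theta'$-stability established, the $\scL''_v$ define a morphism $\scM_\theta \to \scM_{\theta'}$ which agrees with the known birational isomorphism outside $D$ and is therefore itself an isomorphism. The tautological bundles $\bigoplus_v \scL'_v$ then coincide with $\bigoplus_v \scL''_v$ up to a uniform line-bundle twist, which is fixed by the normalization $\scL'_{v_0} = \scO$: if $v_0 \in R_1$ we have $\scL''_{v_0} = \scO$ and so $\scL'_v = \scL''_v$; if $v_0 \in R_2$ we have $\scL''_{v_0} = \scO(-D)$ and so $\scL'_v = \scL''_v(D)$. Translating the dichotomy $v \in R_1$ versus $v \in R_2$ into the degree condition on $\ell$ via Lemma \ref{lm:degree1} then yields the four stated formulas. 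I expect the main technical point to be the non-splitting verification, since $\scS$ and $\scQ$ now form non-trivial families over $D$; however, the obstruction is produced by a purely local arrow whose behaviour is governed by the toric coordinates of \cite[Lemma 4.5]{Ishii-Ueda_08}, so the type 0 argument should transplant without essential modification.
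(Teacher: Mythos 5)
Your proposal is correct and follows essentially the same route as the paper, whose entire proof of this lemma is the remark that it ``is proved in the same way as in Lemma \ref{lm:tautological_type0}''; you have simply carried out that adaptation explicitly (elementary transformation along $\scQ$, the local monomial/arrow argument for non-splitting, normalization at $v_0$, and the translation of $v\in R_1$ versus $v\in R_2$ into degrees on $\ell$ via the type~I computation), and your observation that rigidity of $S_z$ or $Q_z$ is never used in that argument is exactly why the transplant works.
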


\begin{proof}
This is proved in the same way
as in Lemma \ref{lm:tautological_type0}.
\end{proof}

Let $\scMtilde = \scM_\theta \times_{X_{\theta_0}} \scM_{\theta'}$
be the fiber product of $\scM_\theta$ and $\scM_{\theta'}$
over $X_{\theta_0}$.
The natural projections will be denoted by
$p : \scMtilde \to \scM_\theta$ and
$q : \scMtilde \to \scM_{\theta'}$.
%Both $p$ and $q$ are blow-ups along the unstable loci,
%and the exceptional set $E$ is a divisor in $\scMtilde$
%isomorphic to $\bP^1 \times \bP^1$.
Let $\omega_{\scMtilde}$ be the dualizing sheaf of $\scMtilde$.
The scheme $\scMtilde$ has two irreducible components;
one is isomorphic to $\scM:=\scM_\theta \cong \scM_{\theta'}$ and the other
is $D \times _B D$.
One has exact sequences
\begin{equation}\label{eq:scMtilde1}
0 \to \scO_{\scM}(-D) \to \scO_{\scMtilde} \to \scO_{D \times_B D} \to 0
\end{equation}
and
\begin{equation}\label{eq:scMtilde2}
0 \to \omega_{\scM} \to \omega_{\scMtilde} \to \omega_{D \times_B D}(\Delta_D) \to 0,
\end{equation}
where $\Delta_D \subset D \times_B D$ is the diagonal.

\begin{lemma} \label{lm:tautological_type3_q}
If $v_0 \in R_1$,
then one has
$$
 \bR q_* \circ \bL p^* \lb \scL_v \rb
  \cong \scL_v'
$$
for any $v \in V$.
If $v_0 \in R_2$,
then one has
$$
 \bR q_* \lb \omega_{\scMtilde} \otimes
  \bL p^* \lb \scL_v \rb \rb
  \cong \scL_v'
$$
for any $v \in V$.
\end{lemma}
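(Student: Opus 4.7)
The plan is to apply $\bR q_*$ to each of the exact sequences \eqref{eq:scMtilde1} and \eqref{eq:scMtilde2} after tensoring with $\bL p^* \scL_v = p^* \scL_v$, and then identify the resulting object with the line bundle $\scL_v'$ given explicitly in \pref{lm:tautological_type3_s}. Two geometric facts will be used repeatedly: on the diagonal component $\scM \subset \scMtilde$, the maps $p$ and $q$ both restrict to the identity under the isomorphism $\scM_\theta \simto \scM_{\theta'}$; and on the other component $D \times_B D$, the morphism $\pi_2 := q|_{D \times_B D}$ is a $\bP^1$-bundle over $D$ fitting into a cartesian square with $\pi_D : D \to B$, so flat base change gives $\bR(\pi_2)_* \pi_1^*(-) \cong \pi_D^* \bR(\pi_D)_*(-)$.

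For $v_0 \in R_1$, tensoring \eqref{eq:scMtilde1} by $p^*\scL_v$ and applying $\bR q_*$ yields a triangle whose outer terms are $\scL_v(-D)$ (from the diagonal component) and $\bR(\pi_2)_* \pi_1^*(\scL_v|_D) \cong \pi_D^* \bR(\pi_D)_*(\scL_v|_D)$. By \pref{lm:degree1}, $\deg \scL_v|_\ell \in \{0,-1\}$: when the degree is $-1$ the third term vanishes (fibers of $\pi_D$ are $\bP^1$ and $\scO(-1)$ has no cohomology), so $\bR q_* p^*\scL_v \cong \scL_v(-D) \cong \scL_v'$; when the degree is $0$, $\scL_v|_D$ descends from $B$, the third term becomes $\scL_v|_D$, and the triangle matches the standard sequence $0 \to \scL_v(-D) \to \scL_v \to \scL_v|_D \to 0$, giving $\bR q_* p^*\scL_v \cong \scL_v \cong \scL_v'$.

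For $v_0 \in R_2$, apply the analogous procedure to \eqref{eq:scMtilde2}. Since $\scM_\theta$ is Calabi-Yau one has $\omega_\scM \cong \scO_\scM$, so the contribution from the diagonal is $\scL_v$. The contribution from $D \times_B D$ is computed using $\omega_{D \times_B D} \cong \pi_1^*\omega_{D/B} \otimes \pi_2^*\omega_D$ together with the exact sequence $0 \to \scO \to \scO(\Delta_D) \to \omega_{D/B}^\vee|_{\Delta_D} \to 0$; projection formula and flat base change then reduce it to computing $\bR(\pi_D)_*(\omega_{D/B} \otimes \scL_v|_D)$. When $\deg \scL_v|_\ell = 0$, the contribution vanishes (on fibers of $\pi_2$ the sheaf is $\scO(-1)$), yielding $\bR q_*(\omega_{\scMtilde} \otimes p^*\scL_v) \cong \scL_v \cong \scL_v'$; when $\deg \scL_v|_\ell = 1$, the adjunction $\omega_D \cong \scO_\scM(D)|_D$ identifies the contribution as $\scL_v(D)|_D$, and the triangle matches $0 \to \scL_v \to \scL_v(D) \to \scL_v(D)|_D \to 0$, giving $\bR q_*(\omega_{\scMtilde} \otimes p^*\scL_v) \cong \scL_v(D) \cong \scL_v'$.

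The main technical obstacle is ensuring that in each case the distinguished triangle coincides with the displayed short exact sequence of line bundles, rather than a split version with the same outer terms. This is handled by noting that the morphism from the first term in each triangle is induced from the inclusion of the diagonal component $\scM \hookrightarrow \scMtilde$, and is therefore the tautological inclusion of sheaves $\scL_v(-D) \hookrightarrow \scL_v$ (respectively $\scL_v \hookrightarrow \scL_v(D)$); its cokernel is the desired $\scL_v|_D$ (respectively $\scL_v(D)|_D$), so the middle term is forced to be the prescribed line bundle.
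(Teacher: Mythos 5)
Your proposal is correct and follows exactly the route the paper intends: its entire proof of this lemma is the one-line remark that it follows from Lemma \ref{lm:tautological_type3_s} together with the sequences \eqref{eq:scMtilde1} and \eqref{eq:scMtilde2}, and your argument is precisely the computation implicit in that remark (push forward the two sequences tensored with $p^*\scL_v$, kill or identify the $D\times_B D$ contribution via the fiberwise degree count of Lemma \ref{lm:degree1}, and match the resulting triangle with the explicit description of $\scL_v'$). The details you supply, including the non-splitting of the triangles via the diagonal component, are sound.
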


\begin{proof}
This is proved by using Lemma \ref{lm:tautological_type3_s},
\eqref{eq:scMtilde1} and \eqref{eq:scMtilde2}
\end{proof}

\begin{theorem}%[{Horja \cite{Horja_DCAMS}}]
The functor
$$
 \bR q_* \circ \bL p^* : D^b \coh \scM_\theta \to D^b \coh \scM_{\theta'}
$$
is an equivalence of triangulated categories,
whose inverse is given by
$$
 \bR p_* \lb \omega_{\scMtilde} \otimes \bL p^* (-) \rb
  : D^b \coh \scM_{\theta'} \to D^b \coh \scM_{\theta}.
$$
\end{theorem}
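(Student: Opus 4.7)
The plan is to verify that $F = \bR q_* \circ \bL p^*$ and $G = \bR p_* (\omega_{\scMtilde} \otimes \bL q^*(-))$ (reading the $\bL p^*$ in the statement as a typo for $\bL q^*$) are mutually inverse equivalences by computing the composition $G \circ F$ on the tilting generator and appealing to the Fourier--Mukai / integral-transform calculus.

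First I would set up $F$ and $G$ as Fourier--Mukai transforms whose kernels are $\scO_{\scMtilde}$ and $\omega_{\scMtilde}$ respectively, pushed forward along the closed embedding $\scMtilde \hookrightarrow \scM_\theta \times \scM_{\theta'}$. Since $p$ and $q$ are proper and $\scM_\theta$, $\scM_{\theta'}$ are smooth Calabi--Yau 3-folds, Grothendieck--Serre duality identifies $G$ as an adjoint to $F$ up to shift, so it is enough to check that the unit (or counit) is an isomorphism on a generating subcategory.

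Second I would evaluate $G \circ F$ on each tautological bundle $\scL_v$ and show $G \circ F(\scL_v) \cong \scL_v$. Lemma \ref{lm:tautological_type3_q} already computes $F(\scL_v)$: it is $\scL_v'$ when $v_0 \in R_1$, and a twist of $\scL_v'$ by $\omega_{\scMtilde}$ when $v_0 \in R_2$. Unpacking $G$ via the two exact sequences \eqref{eq:scMtilde1} and \eqref{eq:scMtilde2}, the contributions split according to the irreducible components of $\scMtilde$: the main diagonal component contributes $\scL_v$, while the auxiliary component $D \times_B D$ contributes a piece which collapses under the derived push-forward along the $\bP^1$-bundle $D \times_B D \to D$. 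In combination with Lemma \ref{lm:tautological_type3_s}, which describes how $\scL_v'$ differs from $\scL_v$ by a twist along $D$, the dualizing twist in $G$ is exactly what is needed to cancel the twist produced by $F$.

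Third, since $\scE = \bigoplus_{v \in V} \scL_v$ is a tilting generator of $D^b \coh \scM_\theta$ by assumption \textup{(\bfT)}, the isomorphism $G \circ F(\scL_v) \cong \scL_v$ on each summand, together with compatibility on morphisms (which reduces to the universal isomorphism $\bC \Gamma \simto \End \scE$ by \textup{(\bfE)}), propagates to a natural isomorphism $G \circ F \cong \id$ on all of $D^b \coh \scM_\theta$. The symmetric argument, swapping the roles of $\theta$ and $\theta'$ and using that $F(\scE)$ is (up to twist) the tautological bundle $\scE' = \bigoplus_v \scL_v'$, gives $F \circ G \cong \id$.

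The main obstacle is the reducible structure of $\scMtilde$: the exact sequences \eqref{eq:scMtilde1} and \eqref{eq:scMtilde2} must be combined carefully so that the cross-terms between the main component $\scM$ and the auxiliary component $D \times_B D$ cancel. Concretely, the hard technical point is a Grothendieck-duality computation on the $\bP^1$-bundle $D \times_B D \to D$, showing that tensoring with the relative dualizing sheaf and pushing forward annihilates the auxiliary contribution. Once this local calculation is secured, the rest of the proof is bookkeeping via Lemmas \ref{lm:tautological_type3_s} and \ref{lm:tautological_type3_q}.
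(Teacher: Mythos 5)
Your route is genuinely different from the paper's. The paper disposes of this theorem in one line, identifying both functors as compositions of Horja's EZ-transforms \cite{Horja_DCAMS} with tensor products by line bundles, as explained in \cite[Remark 7.6]{Craw-Ishii}; the equivalence is then Horja's theorem, and no computation on the tautological bundles is required. You instead propose a direct verification: realize $F = \bR q_* \circ \bL p^*$ and its Grothendieck--Serre adjoint $G = \bR p_*(\omega_{\scMtilde}\otimes \bL q^*(-))$ (your reading of the second $\bL p^*$ as a typo for $\bL q^*$ is correct, since the input lives on $\scM_{\theta'}$) as integral transforms, and check the unit and counit on the tilting generator using Lemmas \ref{lm:tautological_type3_s} and \ref{lm:tautological_type3_q} and the sequences \eqref{eq:scMtilde1}--\eqref{eq:scMtilde2}. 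What this buys is independence from Horja's machinery; the price is the duality computation on the reducible scheme $\scMtilde$ that you correctly single out, plus the bimodule argument needed to upgrade the object-wise isomorphisms $GF(\scL_v)\cong\scL_v$ to a natural isomorphism $GF\cong\id$ (this is standard, but it must be run on the unit map of kernels, not merely on abstract isomorphism classes).

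There is, however, one genuine gap: the ``symmetric argument'' for $F\circ G\cong\id$. Concluding $FG\cong\id$ from $FG\bigl(\bigoplus_{v\in V}\scL'_v\bigr)\cong\bigoplus_{v\in V}\scL'_v$ requires $\bigoplus_{v\in V}\scL'_v$ to classically generate $D^b\coh\scM_{\theta'}$; but the hypotheses (\bfT)+(\bfE) are assumed only for $\theta$, and the generation statement for the tautological bundle on $\scM_{\theta'}$ is essentially the content of Theorem \ref{th:Xi} (hence of Theorem \ref{th:change_of_stability}) that this section is in the process of proving. As written, the second half of your argument is circular. The standard repair: once $GF\cong\id$ shows that $F$ is fully faithful, observe that $p$ and $q$ are projective and that $\omega_{\scM_\theta}$ and $\omega_{\scM_{\theta'}}$ are trivial, so the left and right adjoints of $F$ coincide; Bridgeland's criterion for integral transforms (fully faithful with coinciding adjoints and connected target implies equivalence) then shows that $F$ is an equivalence, and $G$, being its right adjoint, is automatically a quasi-inverse. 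With that substitution your plan goes through.
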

\begin{proof}
These functors are compositions of Horja's EZ-transforms \cite{Horja_DCAMS}
and the tensor products by  line bundles as explained
in \cite[Remark 7.6]{Craw-Ishii}.
\end{proof}

This shows \pref{th:Xi} for type \III \ 
with $\Xi = \bR q_* \circ \bL p^*$
or $\bR p_* \lb \omega_{\scMtilde} \otimes \bL p^* (-) \rb$.

\section{Projective crepant resolutions as moduli spaces}
 \label{sc:crepant_resolution}

In this section,
we give a proof of Corollary \ref{cr:crepant_resolution}.
Since the proof is completely parallel
to that of the main theorem of \cite{Craw-Ishii},
we will be brief here and
refer the reader to \cite{Craw-Ishii} for details.

Let $Y = \scM_\theta$ be the moduli space
for some generic stability parameter $\theta \in \Theta$,
and $X = X_0$ be the normalization
of the irreducible component of the moduli space $\scMbar_0$
for the stability parameter $0 \in \Theta$
containing the torus $\bT$.
According to \cite[Proposition 6.3]{Ishii-Ueda_08},
the space $X$ is the Gorenstein affine toric variety
associated with the characteristic polygon
of the dimer model $G$.
Since $Y$ is a toric variety,
its ample cone $\Amp(Y)$
is an interior of a convex rational polyhedral cone.
Since flop is an isomorphism in codimension one,
one can identify the Picard group
of any crepant resolutions.
The union
$$
 \overline{\Mov(Y)} = \bigcup_{Y'} \overline{\Amp(Y')}
  \subset \Pic(Y)_\bQ
$$
of the nef cones
over the set of projective crepant resolutions $Y'$
is the closure of the {\em movable cone}.
Ample cones of birational Calabi-Yau 3-folds
give a polyhedral decomposition
of the movable cone
\cite{Kawamata_CBU}.

Consider the diagram
$$
\begin{CD}
 \Theta_\bQ = \bigcup \overline{C} @>{\varphi^*}>> F^1 \\
  @V{L}VV @VV{p}V \\
 \Pic(Y)_\bQ @<{\sim}<< F^1/F^2.
\end{CD}
$$
The space $\Theta_\bQ$ is divided into chambers
by walls of types 0, I, and \III.
The map $L : \Theta_\bQ \to \Pic(Y)_\bQ$ is
a piecewise-linear map
which is given by $L_C$
on each chamber $C \subset \Theta_\bQ$.
The map $\varphi^*$ is defined similarly
by the collection $\varphi_C^* : C \to F^1$
of linear maps.
The image of $L$ sits inside
the movable cone $\Mov(Y)$,
and one asks whether $L$ surjects onto $\Mov(Y)$.
Lemma \ref{lm:flop} shows that
the moduli space flops
across a wall $W \subset \Theta_\bQ$ of type I.
Since any crepant resolution
$Y' \to X$ is related to $Y$
by a sequence of flops,
it suffices to show the following
to prove Corollary \ref{cr:crepant_resolution}:

\begin{proposition} \label{pr:flop}
Let $Y = \scM_\theta$ be the moduli space
for a generic stability parameter $\theta \in \Theta$.
For any wall $\Wbar \subset \Pic(Y)_\bQ$ of type I
on the boundary of the nef cone $\overline{\Amp(Y)}$,
there is a chamber $C \subset \Theta$
and a wall $W$ of the chamber $C$
such that $L_C(W) = \Wbar$.
\end{proposition}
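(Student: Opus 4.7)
The plan is to lift a line segment in $\overline{\Amp(Y)}$, starting at the GIT polarization of $Y = \scM_\theta$ and ending at a generic point of $\overline{W}$, to a continuous piecewise linear path in $\Theta_\bQ$, and to identify its endpoint with a type I wall of a chamber whose moduli space is still $Y$.

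Pick a rational class $a$ in the relative interior of $\overline{W}$, chosen generically so that the line segment
$$
 \gamma(t) = (1 - t) L_{C_0}(\theta) + t a \qquad (t \in [0, 1])
$$
meets, when lifted, only codimension-one walls of $\Theta_\bQ$. This segment lies in the open ample cone $\Amp(Y)$ for $t \in [0, 1)$ and reaches the boundary only at $t = 1$, where $\gamma(1) = a$. The commutative diagram displays $L_C = p \circ \varphi_C^*$ as a surjective linear map on each chamber (with kernel of dimension $\dim F^2$), and the identity $L_C(\theta_0) = f^* \scO_{X_{\theta_0}}(1) = L_{C'}(\theta_0)$ for wall parameters $\theta_0$ shared between adjacent chambers $C, C'$ glues the family $\{L_C\}$ into a continuous piecewise linear map on $\bigcup \overline{C}$. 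Thus $\gamma$ admits a continuous piecewise linear lift $\tilde\gamma : [0, 1] \to \Theta_\bQ$ with $\tilde\gamma(0) = \theta$, crossing a finite sequence of walls and refracting at each of them.

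I next claim that $\tilde\gamma(t) \in R$ for every $t \in [0, 1)$, where $R \subset \Theta_\bQ$ is the union of chambers $C$ with $\scM_C = Y$. For any wall $W_0$ crossed at time $t$, the image $L_C(W_0) \ni \gamma(t)$ equals $f^* \scO_{X_{\theta_0}}(1)$, which lies in the open cone $\Amp(Y)$ precisely when $f$ is an isomorphism, i.e.\ when $W_0$ is of type $0$; walls of type I or type \III contract some subvariety and hence have image on $\partial \overline{\Amp(Y)}$. Since $\gamma(t) \in \Amp(Y)$ for $t < 1$, only type $0$ walls are crossed, and \pref{lm:tautological_type0} shows these preserve the underlying moduli space, so $\tilde\gamma(t) \in R$. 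At $t = 1$, the condition $\gamma(1) = a \in \partial \overline{\Amp(Y)}$ forces $\tilde\gamma(1)$ onto a wall $W'$ of its chamber $C \subset R$; the GIT contraction determined by $\tilde\gamma(1)$ is the contraction associated with $a$, which is of type I by hypothesis, so $W'$ is of type I. Linearity of $L_C$ and the generic choice of $a$ in the relative interior of $\overline{W}$ force $L_C(W')$ to be a full-dimensional subcone of $\overline{W}$; uniqueness of the primitive contraction corresponding to any point in the relative interior of $\overline{W}$ then promotes this to $L_C(W') = \overline{W}$.

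The main obstacle is constructing the lift $\tilde\gamma$ coherently across wall transitions: the kernel $\ker L_C$ of dimension $\dim F^2$ is nontrivial in general, so the lift is not unique and consistent choices of direction must be made at each wall while ensuring that the path does not accidentally pass through a codimension $\geq 2$ intersection of walls. Genericity of $a$ (and, if necessary, of the initial direction in $\Theta_\bQ$) handles the latter, and the non-uniqueness from $\ker L_C$ is immaterial since the conclusion depends only on images in $\Pic(Y)_\bQ$.
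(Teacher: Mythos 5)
Your overall strategy --- travel from $L_{C_0}(\theta)$ toward the type I facet $\Wbar$ crossing only type 0 walls, so that the underlying moduli space never changes --- is the paper's strategy, but the two steps you treat as automatic are precisely where the work lies, and the first rests on a false picture of the map $L$. Continuity of $L$ across walls does not give path lifting. For a type 0 wall $W = W_{R_1}$, the class $[\Phi_C^{-1}(S)] = [\scL_v^\vee|_Z]$ of the destabilizing object is supported on a compact \emph{divisor}, hence does not lie in $F_1$, so the hyperplane $W_{R_1}$ does not contain $\ker L_C = (\varphi_C^*)^{-1}(F^2)$; consequently $L_C(W)$ is a \emph{full-dimensional} cone inside $\Amp(Y)$ (only type I and \III\ walls have codimension-one image). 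Downstairs in $\Pic(Y)_\bQ$ the cones $L_C(\Cbar)$ therefore do not tile $\overline{\Amp(Y)}$ with images of type 0 walls as interfaces, ``the wall crossed at time $t$'' is not determined by $\gamma(t)$, and when the fibre $L_{C_0}^{-1}(\gamma(t)) \cap \Cbar_0$ runs into the boundary of $\Cbar_0$ you must prove that the adjacent chamber continues the lift on the far side, i.e.\ that $\varphi_{C_1}^*(C_1)$ lies on the opposite side of $\varphi_{C_0}^*(W)$ from $\varphi_{C_0}^*(C_0)$. That is the second statement of \pref{pr:Craw-Ishii_8.2}: it only holds up to a twist by a line bundle in $\Pic^c(Y)$ (because of the normalization $\scL_{v_0} = \scO$), and its proof uses that the type 0 wall-crossing equivalence is a spherical twist and hence orientation-preserving. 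None of this appears in your argument; it is exactly why the paper works upstairs in $F^1$, where $\varphi_C^*$ is injective, with the union $\bigcup_{L \in \Pic^c(Y)} \bigcup_{\scM_C \cong Y} L \otimes \varphi_C^*(\Cbar)$.

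Second, finiteness: your lift could a priori cross infinitely many walls with crossing times accumulating before $t=1$ (chambers can be revisited and the fibres of $L_C$ are positive-dimensional), and your closing remark about genericity of $a$ does not address this. The paper rules it out via the local finiteness statement of \pref{pr:Craw-Ishii_8.2}, which needs \pref{lm:Craw-Ishii_8.1} together with the boundedness of $\varphi_C^*(C)$ in the fibre direction of $p$ coming from \pref{th:Craw-Ishii_5.9}. Without these two inputs the argument does not close.
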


Although the chamber $C$ may not be the chamber $C_\theta$
containing the stability parameter $\theta$,
one can reach the chamber
containing the desired wall $W$ of type $I$ on the boundary
by crossing a finite number of walls of type 0.
To show this,
it is natural to consider the union
$\bigcup_{\scM_C \cong Y} \varphi_C^*(\Cbar)$
of closures $\Cbar$ of chambers such that
$\scM_C$ is isomorphic to $Y$,
where $\scM_C$ is the moduli space $\scM_\theta$
for one (and hence any) stability parameter $\theta \in C$.

Unfortunately,
this is still not enough,
since adjacent chambers in $\Theta$ may not be sent
to adjacent cones in $F^1$.
This comes from the ambiguity
in the choice of the tautological bundle,
which we have fixed by hand
by choosing a vertex $v_0 \in V$ and
setting $\scL_{v_0} = \scO_{\scM_\theta}$.
To solve this problem,
%Craw and Ishii
\cite{Craw-Ishii} considers the union
$
 \bigcup_{L \in \Pic^c(Y)} \bigcup_{\scM_C \cong Y}
  L \otimes \varphi_C^*(\Cbar)
$
over the subgroup $\Pic^c(Y)$ of $\Pic(Y)$
generated by $\scO_Y(S)$ for compact divisors $S$.
The reason that it suffices to consider only compact divisors
comes from the fact that the unstable locus
for a type 0 wall is a compact divisor,
so that the variation of the tautological bundle
given in Lemma \ref{lm:tautological_type0}
is described solely in terms of a compact divisor.
Let $\Amp'(Y) \subset \overline{\Amp(Y)}$
be the complement of the walls of type \II.
The advantage of working with $\Pic^c(Y)$
instead of $\Pic(Y)$ lies in the following lemma:

\begin{lemma}[{\cite[Lemma 8.1]{Craw-Ishii}}]
 \label{lm:Craw-Ishii_8.1}
 Let $S_1, \dots, S_b$ be a basis of $\Pic^c(Y)$.
If $p(\xi) \in \Amp'(Y)$ for $\xi \in F^1$,
then $[\scO_{S_1} \otimes \xi], \dots, [\scO_{S_b} \otimes \xi]$
are linearly independent elements in $F^2$.
\end{lemma}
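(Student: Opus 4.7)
The plan is to reduce the statement, via the Chern character, to a linear-independence assertion for 1-cycles, and then to exploit the geometric meaning of the condition $p(\xi) \in \Amp'(Y)$.

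First, since $Y = \scM_\theta$ is a smooth toric variety, $K(Y)$ is torsion-free, and the Chern character induces an isomorphism $K(Y)_\bQ \simto A^*(Y)_\bQ$ compatible with the codimension filtration; in particular the graded piece $F^2/F^3$ is identified with the numerical group $N_1(Y)_\bQ$ (recall $F^3 = 0$ after the numerical identification, by \pref{pr:perfect}). Computing the Chern character of $\scO_{S_i} \otimes \xi$ and extracting the codimension-2 component, the leading contribution is $[S_i] \cdot p(\xi) \in N_1(Y)_\bQ$, the intersection of the compact divisor class $S_i$ with the divisor class $p(\xi)$. Hence it suffices to prove that these $b$ cycle classes are linearly independent in $N_1(Y)_\bQ$.

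Second, I would recast this as the injectivity of the intersection map
\[
  \cdot p(\xi) : \Pic^c(Y)_\bQ \longrightarrow N_1(Y)_\bQ,
  \qquad L \longmapsto L \cdot p(\xi).
\]
Assuming $D = \sum a_i S_i \ne 0$ lies in the kernel, we obtain $D \cdot p(\xi) \cdot D' = 0$ for every $D' \in \Pic(Y)_\bQ$, i.e.\ the symmetric trilinear intersection form $\Pic^c(Y)_\bQ \times \Pic(Y)_\bQ \times \Pic(Y)_\bQ \to \bQ$ (which makes sense because one factor is compactly supported) degenerates in the first slot at the class $p(\xi)$.

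Third, I would invoke the geometric description of $\Amp'(Y)$. By Wilson's classification, a wall on the boundary of $\overline{\Amp(Y)}$ is of type \II\ precisely when the corresponding primitive contraction collapses a compact divisor $S$ to a point, and on such a wall every nef class $H$ restricts to the zero class on $S$, producing an element in the kernel of $\cdot H$. Away from type \II\ walls, no such degeneration occurs: a nef class $H = p(\xi) \in \Amp'(Y)$ either lies in the interior of the ample cone or on a wall of type 0, I or \III, in which case $H|_{S_i}$ is a non-trivial nef class on each projective surface $S_i$, and one can separate the classes $[S_i] \cdot H$ in $N_1(Y)_\bQ$ by pairing with suitable divisor classes coming from the toric structure.

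The main obstacle is the last step: making precise the claim that $\cdot p(\xi)$ is injective on $\Pic^c(Y)_\bQ$ exactly when $p(\xi) \notin$ (type \II\ walls). This requires a careful case-by-case examination of the walls of $\overline{\Amp(Y)}$, using the toric description of $Y$ to exhibit, for each basis vector $S_i$, a divisor class $D_i' \in \Pic(Y)_\bQ$ such that the triple intersection numbers $S_j \cdot p(\xi) \cdot D_i'$ form a non-singular $b \times b$ matrix; equivalently, one must verify that the restrictions $p(\xi)|_{S_i}$ give linearly independent $1$-cycle classes when pushed forward to $N_1(Y)_\bQ$, and this is where the toric/combinatorial input (the geometry of the characteristic polygon and its triangulation) enters in an essential way.
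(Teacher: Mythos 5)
First, note that the paper itself does not prove this statement: it is quoted directly from \cite[Lemma 8.1]{Craw-Ishii}, so the only available comparison is with that argument. Your first two steps are correct and agree with it: since $F^3=0$, the Chern character identifies $F^2$ with a group of numerical curve classes, the class of $\scO_{S_i}\otimes\xi$ corresponds (up to sign) to the $1$-cycle $S_i\cdot p(\xi)$, and the lemma is equivalent to the injectivity of $L\mapsto L\cdot p(\xi)$ on $\Pic^c(Y)_\bQ$.

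The gap is in your third step, which is where the entire content of the lemma lives. Knowing that $p(\xi)$ restricts to a non-zero nef class on each individual $S_i$ does \emph{not} imply that the classes $S_i\cdot p(\xi)$ are jointly linearly independent: a priori there could be cancellations among several $S_i$ (for instance on a type \III\ wall the restriction of $p(\xi)$ to the contracted divisor is nef but not big, and a relation involving it must still be excluded). You acknowledge this as ``the main obstacle'' but then only assert that suitable divisor classes $D_i'$ with non-singular triple-intersection matrix ``must'' exist; that assertion is precisely the statement to be proved. The standard way to close the gap is a Zariski-lemma/Hodge-index argument: a relation $\sum_i a_i\,S_i\cdot p(\xi)=0$, paired with each $S_j$, shows that $(a_i)$ lies in the kernel of the symmetric matrix $M_{ij}=S_i\cdot S_j\cdot p(\xi)$; nefness of $p(\xi)$ gives $M_{ij}\ge 0$ for $i\ne j$, adjunction together with the triviality of the canonical class of $Y$ and the effectivity of the anticanonical (toric boundary) divisor of each compact toric surface gives $M_{ii}=\deg\bigl(K_{S_i}\cdot p(\xi)|_{S_i}\bigr)\le 0$, and the vanishing of the sum of all toric divisors in $\Pic(Y)_\bQ$ gives non-positive row sums; such a matrix is negative semi-definite, and its radical is spanned by effective compact divisors on which $p(\xi)$ is numerically trivial, i.e.\ divisors contracted to a point --- exactly the type \II\ situation excluded by the hypothesis $p(\xi)\in\Amp'(Y)$. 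Without an argument of this kind (or an explicit construction of your matrices $D_i'$), the proof is incomplete.
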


To prove Proposition \ref{pr:flop},
one uses the following description of the chamber $C$:

\begin{theorem}[{\cite[Theorem 5.9]{Craw-Ishii}}]
 \label{th:Craw-Ishii_5.9}
%Let $C \subset \Theta_\bR$ be a chamber.
One has $\theta \in C$ if and only if
\begin{itemize}
 \item
for every exceptional curve $\ell$, one has
$
 \theta(\varphi_C(\scO_\ell)) > 0,
$
and
 \item
for every compact reduced divisor $D$
and every vertex $v \in V$,
one has
\begin{equation} \label{eq:ineq_0}
 \theta(\varphi_C(\scL_v^\vee \otimes \omega_D)) < 0
  \quad \text{and} \quad
 \theta(\varphi_C(\scL_v^\vee|_D)) > 0.
\end{equation}
\end{itemize}
\end{theorem}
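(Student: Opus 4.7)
The plan is to translate Lemma \ref{lm:chamber1}, which characterizes $C$ as the cone of $\eta \in \Theta_\bR$ with $\eta(F) > 0$ for every non-trivial proper submodule $F$ of every $\theta$-stable $\bC\Gamma$-module, into the explicit geometric inequalities of the theorem by applying the derived equivalence $\Phi_C$ and the wall analysis of Sections \ref{sc:variation_of_moduli}--\ref{sc:variation_of_bundles}, following the template of the proof of \cite[Theorem 5.9]{Craw-Ishii} adapted from the McKay setting to general dimer models.

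For each wall of $C$ the destabilizing subobject has already been identified. A type I or III wall produces a rigid destabilizing submodule $S = \Phi_C(\scO_\ell(k))$ for an exceptional wall curve $\ell$; since $\varphi_C(\scO_{\mathrm{pt}}) = \chi_V \in \ker \theta$, the $k$-dependence drops and the wall inequality $\theta(S)>0$ becomes $\theta(\varphi_C(\scO_\ell)) > 0$. A type 0 wall with rigid subobject gives $S = \Phi_C(\scL_v^\vee|_Z)$ for $v \in R_1$ on the compact unstable divisor $Z$, so its wall inequality reads $\theta(\varphi_C(\scL_v^\vee|_Z)) > 0$. A type 0 wall with rigid quotient gives $Q = \Phi_C(\scL_v^\vee \otimes \omega_Z[2])$ for $v \in R_2$; since $[S] = [E] - [Q]$ and $\theta([E]) = 0$, the wall inequality $\theta(S) > 0$ becomes $\theta(\varphi_C(\scL_v^\vee \otimes \omega_Z)) < 0$. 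Lemma \ref{lm:abscence_II} excludes type II walls. Every wall inequality of $C$ is therefore a special case of one of the listed inequalities, which yields the sufficient direction at once.

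For the necessary direction, assume $\theta \in C$. The exceptional-curve inequality $\theta(\varphi_C(\scO_\ell)) = \deg(L_C(\theta)|_\ell) > 0$ is immediate from \eqref{eq:degree} and the ampleness of $L_C(\theta)$, which is positive on every irreducible curve. For a general compact reduced divisor $D \subset \scM_\theta$ and vertex $v \in V$, I would decompose $[\scL_v^\vee|_D]$ and $[\scL_v^\vee \otimes \omega_D]$ in $K_0(\scM_\theta)_\bQ$ using the dimension filtration: each irreducible compact toric component $D_i$ of $D$ arises as the unstable locus of some wall (of $C$ or of an adjacent chamber reachable via wall-crossing), so Lemma \ref{lm:tautological_type0} and Corollary \ref{cr:S0} identify $\Phi_C(\scL_v^\vee|_{D_i})$ with a $\bC\Gamma$-module whose composition factors are destabilizing modules at walls. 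The individual inequalities then follow from the wall analysis above, and Serre duality on the Gorenstein projective $D_i$ swaps the two families $\scL_v^\vee|_{D_i}$ and $\scL_v^\vee \otimes \omega_{D_i}$, giving the signs required by the theorem.

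The main obstacle is extending the divisor inequalities from wall divisors of $C$ to arbitrary compact reduced divisors $D$, where there is no direct interpretation of $\Phi_C(\scL_v^\vee|_D)$ as a $\theta$-destabilizing submodule. The resolution uses the diagram of Section \ref{sc:KvsPic} relating $\varphi_C^*$ and $L_C$ through $F^1 \to F^1/F^2 \cong \Pic(\scM_\theta)_\bQ$ to rewrite the pairings $\theta(\varphi_C(-))$ as intersection numbers on $\scM_\theta$ and then reduces to the wall case by Grothendieck-group arithmetic combined with ampleness of $L_C(\theta)$.
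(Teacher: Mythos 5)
The half of your argument that derives the listed inequalities from the walls of $C$ — type I/III walls giving $\theta(\varphi_C(\scO_\ell))>0$ via the rigidity of $\Phi_C(\scO_\ell(k))$, type 0 walls giving the two divisor inequalities via Corollary \ref{cr:S0} and \eqref{eq:Q0}, type II excluded by Lemma \ref{lm:abscence_II} — is exactly the paper's argument for the ``if'' direction, and is fine.

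The gap is in the ``only if'' direction, and it is the step you yourself flag as the main obstacle. Your proposed reduction of an arbitrary compact reduced divisor $D$ to wall divisors does not work, for three reasons. First, the type 0 walls of the single chamber $C$ account for only finitely many compact divisors (each connected unstable locus $Z$), and a compact toric divisor of $\scM_\theta$ need not be the unstable locus of any type 0 wall of $C$: divisors contracted by type III walls produce only the curve inequality $\theta(\varphi_C(\scO_\ell))>0$, not \eqref{eq:ineq_0}, and other compact divisors may only become unstable after crossing into a different chamber $C'$ — but a wall of $C'$ yields an inequality in terms of $\varphi_{C'}$ and the tautological bundles of $\scM_{C'}$, which does not translate into \eqref{eq:ineq_0} for all $\theta\in C$ (only the wall analysis near $\partial C$ is available, and the required inequality must hold on all of $C$). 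Second, for reducible $D=\sum_i D_i$ the class $[\scO_D]$ differs from $\sum_i[\scO_{D_i}]$ by correction terms supported on the intersection curves and points, whose pairing with $\theta$ has no controlled sign, so the inequalities do not add. Third, Serre duality does not swap the two families with a sign flip: it gives $\chi(\scL_w\otimes\scL_v^\vee\otimes\omega_D)=\chi(\scL_w^\vee\otimes\scL_v|_D)$, i.e.\ it dualizes $\scL_w\otimes\scL_v^\vee$ rather than producing $-\theta(\varphi_C(\scL_v^\vee|_D))$; by Riemann--Roch the two quantities differ by $\sum_w\theta(w)\,(c_1(\scL_w\otimes\scL_v^\vee))^2\!\cdot\! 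D$, which need not vanish. The paper instead handles an arbitrary compact reduced divisor directly, by the argument of \cite[Lemma 5.7]{Craw-Ishii}: one shows that $\Phi_C(\scL_v^\vee|_D)$ (respectively $\Phi_C(\scL_v^\vee\otimes\omega_D)[2]$) is a nonzero finite-dimensional $\bC\Gamma$-module built from subobjects (respectively quotient objects) of the $\theta$-stable modules $E_y$ for $y\in D$, so that $\theta$ pairs positively (respectively negatively) with its class; no reduction to wall divisors is involved. As it stands, your necessity direction is not a proof.
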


\begin{proof}[Sketch of proof]
The inequality arising from a wall of type I or \III \ 
is of the form
$
 \theta(\varphi_C(\scO_\ell)) > 0
$
by Lemma \ref{lm:wall1} and
Lemma \ref{lm:wall3}.
The inequality coming from a wall of type 0 is of the form
$
 \theta(\varphi_C(\scL_v^\vee|_D)) > 0
$
in the case when the unstable locus $D$ parameterizes rigid submodules
by Corollary \ref{cr:S0} and
$\theta(\dim S) > 0$,
and of the form
$
 \theta(\varphi_C(\scL_v^\vee \otimes \omega_D)) < 0
$
in the case when $D$ parameterizes rigid quotients
by \eqref{eq:Q0} and
$\theta(\dim Q) < 0$.
Conversely, any parameter $\theta$ in $C$ 
has to satisfy the inequalities \eqref{eq:ineq_0}
for any compact reduced divisor
%follows from these cases 
by the same argument
as in the proof of \cite[Lemma 5.7]{Craw-Ishii}.
\end{proof}

The following proposition
concludes the proof of Proposition \ref{pr:flop}:

\begin{proposition}[{\cite[Proposition 8.2]{Craw-Ishii}}]
 \label{pr:Craw-Ishii_8.2}
For every $\xi \in p^{-1}(\Amp'(Y))$,
there is a neighborhood $N(\xi)$
of $\xi$ in $F^1$ such that only finitely-many pairs $(L, C)$
of $L \in \Pic^c(Y)$ and chamber $C \subset \Theta$ satisfy
$
 N(\xi) \cap L \otimes \varphi_C^*(\Cbar) \ne \emptyset.
$
Moreover, one has
$$
 p^{-1}(\Amp'(Y)) \subset
  \bigcup_{L \in \Pic^c(Y)} \bigcup_{\scM_C \cong Y}
  L \otimes \varphi_C^*(\Cbar).
$$
\end{proposition}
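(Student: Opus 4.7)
The plan is to adapt the argument of \cite[Proposition 8.2]{Craw-Ishii} to the dimer-model setting. The statement splits into local finiteness and the coverage
$$
 p^{-1}(\Amp'(Y)) \subset \bigcup_{L \in \Pic^c(Y)} \bigcup_{\scM_C \cong Y} L \otimes \varphi_C^*(\Cbar),
$$
which I would prove in turn.

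\emph{Local finiteness.} Fix $\xi \in p^{-1}(\Amp'(Y))$ and a basis $S_1, \ldots, S_b$ of $\Pic^c(Y)$. A direct $K$-theoretic calculation starting from $[\scO_Y(S)] = (1 - [\scO_S])^{-1}$ yields
$$
 [L \otimes \eta] - [\eta] \equiv \sum_i n_i [\scO_{S_i} \otimes \eta] \pmod{F^3}
$$
for $L = \scO_Y(\sum_i n_i S_i) \in \Pic^c(Y)$ and $\eta \in F^1$. By Lemma \ref{lm:Craw-Ishii_8.1}, the vectors $[\scO_{S_1} \otimes \xi], \ldots, [\scO_{S_b} \otimes \xi]$ are linearly independent in $F^2/F^3$, and by continuity the same holds throughout a small neighborhood $N(\xi)$. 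Choosing $N(\xi)$ with bounded image in $F^1/F^3$ constrains the integers $n_i$ to a finite set, leaving only finitely many candidate $L$. For each such $L$, the number of chambers $C \subset \Theta$ with $L \otimes \varphi_C^*(\Cbar) \cap N(\xi) \ne \emptyset$ is finite, since the $\varphi_C^*(\Cbar)$ are closed rational polyhedral cones and only finitely many can meet a bounded region of $F^1$.

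\emph{Coverage.} I would use Theorem \ref{th:Craw-Ishii_5.9}, which characterizes $\theta \in \Cbar$ by type I/\III{} inequalities $\theta(\varphi_C(\scO_\ell)) \ge 0$ together with type 0 inequalities involving $\scL_v^\vee|_D$ and $\scL_v^\vee \otimes \omega_D$ for compact reduced divisors $D$. Given $\xi$, construct $L$ and $C$ (with $\scM_C \cong Y$) satisfying $\xi \in L \otimes \varphi_C^*(\Cbar)$ by starting at the chamber $C_\theta$ realizing $Y$ and walking along a path in $F^1$ from $\varphi_{C_\theta}^*(\Cbar_\theta)$ toward $\xi$. Across type I/\III{} walls, the moduli space flops or contracts-and-resolves, and Kawamata's decomposition of $\overline{\Mov(Y)}$ \cite{Kawamata_CBU} into the nef cones of birational crepant resolutions guarantees that a suitable sequence of such crossings brings $p(L^{-1} \otimes \xi)$ into $\overline{\Amp(Y)}$. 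Across type 0 walls, the tautological bundles change by $\scO_Y(Z)$ for the compact unstable locus $Z$ by Lemma \ref{lm:tautological_type0} and Proposition \ref{pr:type0}, an adjustment absorbed into the $\Pic^c(Y)$ factor; type \II{} walls do not arise by Lemma \ref{lm:abscence_II}.

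The main obstacle is controlling the interaction between the $\Pic^c(Y)$-twists and the wall-crossings along the path: each type 0 crossing contributes a twist $\scO_Y(Z)$ with $Z$ compact, hence in $\Pic^c(Y)$, but one must verify that the cumulative twist after traversing a finite wall path still lies in $\Pic^c(Y)$ and that the resulting $\theta = (\varphi_C^*)^{-1}(L^{-1}\otimes \xi)$ satisfies all the inequalities of Theorem \ref{th:Craw-Ishii_5.9} simultaneously. This is a finite combinatorial verification using the local wall structure established in Section \ref{sc:variation_of_bundles}, combined with a connectedness argument on $p^{-1}(\Amp'(Y))$ that lifts paths in $\Amp'(Y)$ to paths in $F^1$ between chambers already accounted for.
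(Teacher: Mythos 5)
Your outline follows the same route as the paper (and as \cite[Proposition 8.2]{Craw-Ishii}): Lemma \ref{lm:Craw-Ishii_8.1} for the local finiteness and a wall-crossing walk for the coverage. But the coverage argument as you state it has a genuine gap, and you have in fact flagged it yourself without resolving it. Walking from $\varphi_{C_\theta}^*(\Cbar_\theta)$ toward $\xi$ only works if each wall crossing makes progress, i.e.\ if the twisted cone $L' \otimes \varphi_{C'}^*(\overline{C'})$ attached to the adjacent chamber $C'$ lies on the \emph{far} side of the wall image $\varphi_C^*(W)$ from $\varphi_C^*(C)$, so that the cones $L \otimes \varphi_C^*(\Cbar)$ tile a neighbourhood of every point of $p^{-1}(\Amp'(Y))$ rather than folding back on themselves. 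This is precisely what your ``finite combinatorial verification'' would have to supply, and it is not combinatorial: the paper derives it from the fact that the derived equivalence across a type 0 wall is a spherical twist, which preserves orientation in the 3-dimensional case, so that $\varphi_{C'}^*$ agrees with $\varphi_C^*$ near the wall up to a twist by a line bundle in $\Pic^c(Y)$ and sends $C'$ to the opposite side of $\varphi_C^*(W)$; for type I and \III{} walls the analogous statement comes from the explicit equivalences $\Xi$ of Section \ref{sc:variation_of_bundles}. Without this orientation statement the union $\bigcup_L \bigcup_C L \otimes \varphi_C^*(\Cbar)$ could a priori fail to cover a neighbourhood of $\xi$ and the walk could stall.

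A smaller but real issue in the local finiteness: boundedness of $N(\xi)$ alone does not bound the integers $n_i$, because the point $\eta \in \varphi_C^*(\Cbar)$ with $L \otimes \eta \in N(\xi)$ ranges a priori over an unbounded cone, so the relation $[L\otimes\eta]-[\eta]\equiv\sum_i n_i[\scO_{S_i}\otimes\eta]$ compares against an uncontrolled $[\eta]$. One first needs that $\varphi_C^*(\Cbar)$ is bounded in the fiber direction of $p$ over the bounded set $p(N(\xi))$; this is exactly what the inequalities of Theorem \ref{th:Craw-Ishii_5.9} (which you invoke only in the coverage step) provide in the paper's argument. Once that is supplied, your use of Lemma \ref{lm:Craw-Ishii_8.1} to pin down the $n_i$, together with the finiteness of the set of chambers in $\Theta$, does yield the first statement.
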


\begin{proof}[Sketch of proof]
The first statement is a local finiteness,
which comes from Lemma \ref{lm:Craw-Ishii_8.1} and
Theorem \ref{th:Craw-Ishii_5.9};
Theorem \ref{th:Craw-Ishii_5.9} shows that
$\varphi_C^*(C)$ is bounded
in the fiber direction of $p$,
and Lemma \ref{lm:Craw-Ishii_8.1} shows that
taking the union over $\Pic^c(Y)$ does not destroy
the local finiteness.

The second statement essentially comes
from the fact that the derived equivalence
caused by crossing a wall of type 0
is a spherical twist,
which does not change the orientation
in the 3-dimensional case.
It follows that if a chamber $C'$ is on the other side
of the wall $W$ from the chamber $C$,
then $\varphi_{C'}^*(C')$ is on the other side
of $\varphi_C^*(W)$
from $\varphi_C(C)$
up to tensor by a line bundle $L \in \Pic^c(Y)$
associated with a compact divisor.

The detail of the proof
%of Proposition \ref{pr:Craw-Ishii_8.2}
is identical to the one in \cite{Craw-Ishii},
and we omit it here.
\end{proof}

Proposition \ref{pr:flop} is an immediate consequence
of Proposition \ref{pr:Craw-Ishii_8.2};
the second statement ensures that
for any wall $\Wbar \subset \Pic(Y)_\bQ$ of type I,
one can go arbitrarily close to that wall
by moving in the space of stability parameters
and tensoring line bundles $L \in \Pic^c(Y)$.
The local finiteness in the first statement
ensures that one in fact can find finitely many walls of type 0
such that by crossing them,
one can reach the desired wall.

\bibliographystyle{amsalpha}
\bibliography{bibs}

\noindent
Akira Ishii

Department of Mathematics,
Graduate School of Science,
Hiroshima University,
1-3-1 Kagamiyama,
Higashi-Hiroshima,
739-8526,
Japan

{\em e-mail address}\ : \ akira@math.sci.hiroshima-u.ac.jp

\ \\

\noindent
Kazushi Ueda

Department of Mathematics,
Graduate School of Science,
Osaka University,
Machikaneyama 1-1,
Toyonaka,
Osaka,
560-0043,
Japan.

{\em e-mail address}\ : \  kazushi@math.sci.osaka-u.ac.jp

\end{document}